 \newtheorem{thm}{Theorem}[section]
 \newtheorem{cor}[thm]{Corollary}
 \newtheorem{lem}[thm]{Lemma}
 \newtheorem{prop}[thm]{Proposition}
 \theoremstyle{definition}
 \newtheorem{defn}[thm]{Definition}
 \theoremstyle{remark}
 \newtheorem{rem}[thm]{Remark}
 \newtheorem{prob}[thm]{Problem}
\newcommand{\To}{\longrightarrow}
\newcommand{\e}{\epsilon}
\newcommand{\ue}{u_\e}
\newcommand{\Pe}[2]{P^\e_{#1,#2}}
\newcommand{\Pz}[2]{P^0_{#1,#2}}
\newcommand{\Real}{\mathbb{R}}
\newcommand{\LD}{\mathcal{L}}
\newcommand{\ddt}{\frac{d}{dt}}
\newcommand{\ddepZ}{\frac{d}{d\epsilon}\Big |_{\epsilon=0}}
\newcommand{\pddx}{\frac{\partial}{\partial x}}
\newcommand{\tl}[1]{\widetilde{#1}}
\newcommand{\br}[1]{\overline{#1}}
\newcommand{\C}{\mathfrak{C}}
\newcommand{\marginnote}[1]
{
}
\begin{document}

\title[Optimal Transportation under Nonholonomic Constraints]
{Optimal Transportation under Nonholonomic Constraints}

\author{Andrei Agrachev}
\email{agrachev@sissa.it}
\address{SISSA-ISAS, Trieste, Italy and MIAN, Moscow, Russia}

\author{Paul Lee}
\email{plee@math.toronto.edu}
\address{Department of Mathematics, University of Toronto, ON M5S 2E4, Canada}

\thanks{The authors were supported by PRIN (first author) and NSERC (second author) grants.}

\maketitle

\begin{abstract}

We study Monge's optimal transportation problem,  where the cost is
given by optimal control cost. We prove the existence and uniqueness
of an optimal map under certain regularity conditions on the
Lagrangian, absolute continuity of the measures with respect to Lebesgue, and most
importantly the absence of sharp abnormal minimizers. In particular,
this result is applicable in the case of subriemannian manifolds
with a 2-generating distribution and cost given by $d^2$, where $d$
is the subriemannian distance. Also, we discuss some properties of
the optimal plan when abnormal minimizers are present. Finally, we
consider some examples of displacement interpolation in the case of
Grushin plane.

\end{abstract}


\section{Introduction}

Let $(\mathcal X,\mu),\ (\mathcal Y,\nu)$ be probability spaces and
$c:\mathcal X\times\mathcal Y\to\Real\cup\{+\infty\}$ be a fixed
measurable function. The Monge's optimal transportation problem is
the minimization of the following functional
\[
\int_{\mathcal X}c(x,\phi(x))\,d\mu
\]
over all the Borel maps $\phi:\mathcal X\to \mathcal Y$ which pushes
forward $\mu$ to $\nu$: $\phi_*\mu=\nu$. Maps $\phi$ which achieve
the infimum above are called optimal maps. In this paper, we will
only consider the case when $\mathcal X=\mathcal Y=M$ is a manifold.

In 1942, Kantorovich studied a relaxed version of the Monge's problem
in his famous paper \cite{Ka}. However, a huge step toward solving
the original problem is not achieved until a decade ago by Brenier.
In \cite{Br}, Brenier proved the existence and uniqueness of optimal
map in the case where $M=\Real^n$ and the cost function $c$ is given
$c(x,y)=|x-y|^2$. Later, this is generalized to the case of a closed
Riemannian manifold $M$ with cost given by the square of the
Riemannian distance $c(x,y)=d^2(x,y)$ by McCann \cite{Mc}. Recently,
Bernard and Buffoni \cite{BeBu} generalized this further to the case
where the cost $c$ is the action associated to a Lagrangian function
$L:TM\to\Real$ on a compact manifold $M$. More precisely,
\begin{equation}\label{Lagcost}
c(x,y)=\inf_{x(0)=x,x(1)=y}\int_0^1L(x(t),\dot x(t))dt,
\end{equation}
where the infimum is taken over all curves joining the points $x$
and $y$, and the Lagrangian $L$ is fibrewise strictly convex with
superlinear growth.

In this paper, we consider costs similar to (\ref{Lagcost}).
However, instead of minimizing among all curves, the infimum is
taken over a subcollection of curves, called admissible paths. These
paths are given by a control system and the corresponding cost
function is called the optimal control cost. Roughly speaking,
a control system is a smooth fibrewise mapping of a locally trivial bundle
over $M$ into $TM$. Locally, such a mapping has a form
$F:(x,u)\mapsto F(x,u)$, where $x\in M,\ u\in U,\ F(x,u)\in T_xM $,
and $U$ is a typical fiber. We assume that $U$ is a closed subset of a
Euclidean space. Admissible controls are measurable bounded maps from
$[0,1]$ to $U$, while admissible paths are Lipschitz curves which
satisfy the equation
\begin{equation}\label{controlsys}
\dot x(t)=F(x(t),u(t)),
\end{equation}
where $u(\cdot)$ is an admissible control.
Let $L:M\times U\to\Real$ be a Lagrangian, then the corresponding cost $c$ is given by
\begin{equation}\label{controlcost}
\inf_{(x(\cdot),u(\cdot))}\int_0^1L(x(t),u(t))\,dt,
\end{equation}
where the infimum is taken over all admissible pairs
$(x(\cdot),u(\cdot)):[0,1]\to M\times U$ such that $x(0)=x,\ y(0)=y$.

In the interesting cases, the dimension of $U$ is essentially smaller
than that of $M$ and, nevertheless, any two points of $M$ can be connected
by an optimal admissible path. In other words, the control system works as
a nonholonomic constraint. Shortage of admissible velocities does not allow
to recover an optimal path from its initial point and initial velocity
and the Euler--Lagrange description of the extremals does not work well.
On the other hand, Hamiltonian approach remains efficient according to
the Pontryagin maximum principle. Another problem is appearance of so called
abnormal extremals (singularities of the space of admissible paths)
which we are obliged to fight with.

In sections 2 and 3, we will recall some basic notions in optimal
control theory and the theory of optimal mass transportation which
are necessary for this paper.

In section 4, by using some standard argument in the theory of
optimal mass transportation and the Pontryagin maximum principle in
optimal control theory, we show the existence and uniqueness of
optimal map under some regularity assumptions (Theorem \ref{main}).
All these conditions are mild except the Lipschitz continuity of the
cost function. However, this is well-known in all of the above cases
mentioned. So, the theorem generalizes the work in
\cite{Br,Mc,BeBu}.

In section 5, we study the Lipschitz continuity of the cost function.
If abnormal minimizers are absent, then the cost is not only Lipschitz
but even semi-concave (see \cite{CaRi}). Unfortunately, abnormal
minimizers are unavoidable in many interesting problems and, in particular,
in all subriemannian problems. It happens however that not all abnormal
minimizers are dangerous. To keep the Lipschitz property of the cost,
(though not the semi-concavity) it is sufficient that the, so called,
sharp abnormal minimizers are absent. Sharp paths are essentially
singularities of the space of admissible paths whose neighborhoods
in the second order approximations are contained in quadrics with
a finite Morse index. Geometric control theory provides simple effective
conditions of the sharpness (see, for instance, \cite{AgSa1, AgSa3}).
These conditions allow us to prove Lipschitz continuity for a large class
of optimal control cost. Hence, proving the existence and uniqueness of
optimal map of the corresponding Monge's problem (Theorem \ref{maintransport}).

In section 6, we apply the above results to some subriemannian
manifolds, where the cost function is given by the square of the
subriemannian distance (See section 6 for the basic notions in
subriemannian geometry). In the case of a subriemannian manifold,
all the mild regularity assumptions are satisfied. Using the result
in \cite{AgSa3} mentioned above (Proposition \ref{Goh}), Lipschitz
continuity of the cost can be easily proven in the case of a step 2
distribution (Corollary \ref{lip2}). Hence, proving existence and
uniqueness of optimal map. This generalizes the corresponding result
by Ambrosio and Rigot \cite{AmRi} on the Heisenberg group.

In section 7 and 8, we show some properties of the optimal plan when
abnormal minimizers are present. In section 7, we consider flows
whose trajectories are strictly abnormal minimizers. We show that
these flows cannot be an optimal plan for all ``nice" initial
measures if the cost is continuous. On the contrary, in section 8,
we show that these flows are indeed optimal for an important class
of problems with discontinuous cost.

In section 9, we study two examples on Grushin plane for  which the
results in section 3 and 4 apply.

\bigskip

\section{Elementary Optimal Control Theory}

In this section, we recall some notions from optimal control
theory. See \cite{AgSa1}, \cite{Ga} for detail. Let $M$ be a smooth manifold
and let $U$ be a closed subset in $\Real^m$ which is called the
control set. Let $F:M\times U\to TM$ be a Lipschitz continuous
function such that $F_u:=F(\cdot,u):M\to TM$ is a smooth vector
field for each point $u$ in the control set. Assume that the
function $(x,u)\mapsto \pddx F(x,u)$ is continuous. Curves $u(\cdot):[0,1]\to U$ in the
control set $U$ which are locally bounded and measurable are called
admissible controls.

A control system is the following ordinary
differential equations with parameters varying over all admissible
controls.

\begin{equation}\label{control}
\dot x(t)=F(x(t),u(t)).
\end{equation}
The solutions $x(t)$ to the above control system are called admissible
paths and $(x(t),u(t))$ are called admissible pairs.

By classical theory of ordinary differential equations, a
unique solution to the system (\ref{control}) exists locally for
almost all time $t$. Moreover, the resulting local flow is smooth
in the space variable $x$ and Lipschitz in the time variable $t$. The control system
is complete if the flows of all control vector fields exist globally.

Let $x_0$ and $x_1$ be two points on the manifold $M$. Denote by $\C_{x_0}$ the
set of all admissible pairs $(x(\cdot),u(\cdot))$ for which the corresponding admissible
paths $x(\cdot)$ start at the point $x_0$. And denote by $\C_{x_0}^{x_1}$ those pairs in
$\C_{x_0}$ whose admissible paths end at $x_1$. A control system is called controllable if the
set $\C_{x_0}^{x_1}$ is always nonempty for any pair of points
$x_0$ and $x_1$ on the manifold.

Let $L:M\times U\to\Real$ be a smooth function, called Lagrangian, and defined the cost
function corresponding to this Lagrangian as follow:

\begin{equation}\label{cost}
c(x_0,x_1)=\left\lbrace
  \begin{array}{c l}
    \inf\limits_{(x(\cdot),u(\cdot))\in\C_{x_0}^{x_1}}\int_0^1L(x(t),u(t))\,dt & \text{if $\C_{x_0}^{x_1}\neq\emptyset$},\\
    +\infty & \text{otherwise}.
  \end{array}
\right.
\end{equation}

The cost function defined above is said to be complete if given
any pairs of points $(x_0,x_1)$, there exists an admissible
pair which achieves the infimum above and the corresponding
admissible path starts from $x_0$ and ends at $x_1$.

\begin{rem}
The infimum of the problem in (\ref{cost}) can be equivalently
characterized by taking infimum over all admissible controls $u(\cdot)$
such that the corresponding admissible paths start at the point
$x_1$, end at the point $x_0$ of the manifold and satisfy the
following control system
\[
\dot x(s)=-F(x(s),u(s)).
\]
This point will become important for the later discussion.
\end{rem}

Consider the following minimization problem, commonly known as the
Bolza problem:

\begin{prob}\label{Bolza}
\[\inf_{(x(\cdot),u(\cdot))\in\C_{x_0}}\int_0^1L(x(s),u(s))\,ds-f(x(1))\]
\end{prob}

Next, we present an elementary version of the Pontryagin maximum
principle which we prove in the appendix for the convenience of
readers. Let $\pi:T^*M\to M$ be the cotangent bundle projection. For
each point $u$ in the control set $U$, define the corresponding
Hamiltonian function $H_u:T^*M\to\Real$ by
\[
H_u(p_x)=p_x(F(x,u))+L(x,u).
\]
If $H:T^*M\to\Real$ is a function on the cotangent bundle, we denote
its Hamiltonian vector field by $\overrightarrow{H}$.

\medskip

\begin{thm}\label{PMPB}(Pontryagin Maximum Principle for Bolza Problem)

Let $(\tl x(\cdot),\tl u(\cdot))$ be an admissible pair which achieves the
infimum in Problem \ref{Bolza}. Assume that the function $f$ in
Problem \ref{Bolza} is sub-differentiable at the point
$\tl x(1)$. Then, for each $\alpha$ in the sub-differential
$d^-f_{\tl x(1)}$ of $f$, there exists a Lipschitz path $\tl
p:[0,1]\to T^*M$ which satisfies the following for almost all time
$t$ in the interval $[0,1]$:

\begin{equation}
\left\{%
\begin{array}{ll}
    \pi(\tl p(t))=\tl x(t),\\
    \tl p(1)=-\alpha,\\
    \dot{\tl p}(t)=\overrightarrow H_{\tl u(t)}(\tl p(t)),\\
    H_{\tl u(t)}(\tl p(t))=\min\limits_{u\in U}H_u(\tl p(t)).

\end{array}%
\right.
\end{equation}

\end{thm}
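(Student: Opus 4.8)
The plan is to reduce the statement to the classical finite-dimensional Pontryagin maximum principle by a needle-variation argument, handling the endpoint term $-f(x(1))$ via the subdifferential. First I would fix a minimizing admissible pair $(\tl x(\cdot),\tl u(\cdot))$ and a covector $\alpha\in d^-f_{\tl x(1)}$, so that $f(y)\le f(\tl x(1))+\langle\alpha,\,y-\tl x(1)\rangle+o(|y-\tl x(1)|)$ in any chart around $\tl x(1)$. Then I would perform the standard packet of needle (spike) variations of the control $\tl u$ on finitely many disjoint intervals of infinitesimal length around Lebesgue points, parametrized by the jump values $w_i\in U$ and the durations $\theta_i\ge 0$; propagating the induced perturbation of the trajectory by the variational (linearized) equation $\dot v=(\partial_xF)(\tl x,\tl u)\,v$ gives, at $t=1$, a perturbation vector lying in a convex cone $K\subset T_{\tl x(1)}M$ — the first-order attainable cone along $\tl x$. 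Because $(\tl x,\tl u)$ is optimal for the functional $\int_0^1 L\,dt - f(x(1))$, combining the cost variation with the subdifferential inequality for $f$ shows that the linear functional $\delta\mapsto (\text{cost variation}) - \langle\alpha,\delta\rangle$ is $\ge 0$ on this cone; a separation/Lagrange-multiplier argument on the cone then yields a nonzero adjoint covector.

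Concretely, I would append the running cost as an extra coordinate $x^0$ with $\dot x^0 = L(x,u)$, work in $\Real\times T_xM$, and apply the convex-cone separation to the extended attainable cone together with the half-space cut out by $\{\delta x^0 \le \langle\alpha,\delta x\rangle\}$. This produces a covector whose $x$-component, call it $\tl p(1)$, and whose $x^0$-component, a scalar $\lambda_0\le 0$, are not both zero. The transversality condition from the endpoint term forces $\tl p(1) = -\lambda_0\,\alpha$; after checking $\lambda_0\ne0$ (this is where abnormality could in principle intrude, but for the Bolza problem with free right endpoint the extra $x^0$-direction guarantees normality, so one may normalize $\lambda_0=-1$) I get $\tl p(1)=-\alpha$. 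Lifting $\tl p(1)$ backward by the adjoint equation defines a Lipschitz curve $\tl p:[0,1]\to T^*M$ with $\pi(\tl p(t))=\tl x(t)$ and $\dot{\tl p}(t)=\overrightarrow H_{\tl u(t)}(\tl p(t))$ for a.e.\ $t$ (the Hamiltonian flow of the time-dependent field $H_{\tl u(t)}$). Finally, the minimality of each needle variation — taking the spike value $w$ to be an arbitrary element of $U$ at a Lebesgue point — yields $H_{\tl u(t)}(\tl p(t)) = \min_{u\in U}H_u(\tl p(t))$ for a.e.\ $t$; a separate argument (differentiating along the flow and using that the maximized Hamiltonian is Lipschitz in $t$) shows this holds after modification on a null set.

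The main obstacle I anticipate is the bookkeeping of the needle variations on a manifold rather than in $\Real^n$: one must either work in charts and patch, or invoke the chronological-calculus formalism, and in either case verify that the attainable cone is genuinely convex (this needs the variations to be mutually "independent," handled by the standard device of ordering the spike intervals and letting their lengths go to zero together while keeping the ratios fixed). The endpoint-subdifferential bookkeeping is comparatively soft: since $f$ need only be sub-differentiable, one never differentiates $f$, only uses the one-sided inequality, and the separation argument accommodates this automatically because a subgradient inequality is exactly a supporting-hyperplane statement. A secondary technical point is the regularity of $\tl p$: Lipschitz continuity follows from $F$ and $L$ being Lipschitz in $x$ and $C^1$ in the sense assumed ($\partial_x F$ continuous), so the adjoint ODE has Lipschitz-in-$t$, linear-in-$p$ right-hand side along $\tl x$, giving a unique Lipschitz solution by Carathéodory theory.
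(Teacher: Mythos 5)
Your proposal is correct in outline but takes a genuinely different route from the paper. The paper also reduces Bolza to a Mayer problem by appending the running cost as an extra coordinate, but from there it avoids the attainable-cone/separation machinery entirely: it uses a \emph{single} needle variation, and it extracts the terminal covector directly from the sub-differentiability hypothesis by choosing a $C^1$ function $\phi$ with $d\phi_{\tl x(1)}=\br\alpha$ such that $g-\phi$ has a local maximum at $\tl x(1)$; the optimality inequality $g(P^\e_{0,1}(x_0))\ge g(P^0_{0,1}(x_0))$ then immediately gives $\br\alpha\bigl(\tfrac{d}{d\e}\big|_{\e=0}P^\e_{0,1}\bigr)\ge 0$, with no multiplier to be found and no abnormality to exclude. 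The adjoint curve is then \emph{defined} as the pullback $\tl p(t)=(P^0_{t,1})^*\br\alpha$, and the fact that it solves the Hamiltonian ODE is verified via invariance of the tautological one-form and Cartan's formula, rather than by integrating the adjoint equation backward. Your approach (packet of spike variations, convex approximating cone, separation, then a normality check) is heavier but more general--it is the argument one needs when terminal constraints are present--whereas the paper's exploits the free right endpoint to make the multiplier canonical from the start. Two small points to fix in your write-up: the sub-differential inequality should read $f(y)\ge f(\tl x(1))+\langle\alpha,y-\tl x(1)\rangle+o(|y-\tl x(1)|)$ (you wrote $\le$, which is the super-differential; your subsequent conclusion that the cost variation dominates $\langle\alpha,\delta\rangle$ is the one that follows from the correct direction, so this is a transcription slip rather than a logical error); and for the free-endpoint problem the full convexity of the variational cone is not actually needed, since a single spike already yields the pointwise minimum condition once the covector is in hand.
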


\medskip

\begin{rem}\label{generalcontrol}

Let $\Delta\subset TM$ be a distribution on a $n$-dimensional manifold $M$. That is, for each point $x$ in the manifold $M$, it smoothly assigns a vector subspace $\Delta_x$ of the tangent space $T_xM$. Assume that the distribution $\Delta$ is trivializable, i.e. there exists a system of vector fields $X_1,...,X_k$ which span $\Delta$ at every point: $\Delta_x=\text{span}\{X_1(x),...,X_k(x)\}$.
Consider the following control system:

\begin{equation}\label{simplecontrol}
\dot x(t) = \sum_{i=1}^k u_i(t) X_i(x(t)),
\end{equation}
with initial condition $x(0)=x$ and final condition $x(1)=y$. Recall that we denote by $\C_x^y$ the set of all admissible pair $(x(\cdot),u(\cdot))$ such that the admissible path $x(\cdot)$ satisfies $x(0)=x$ and $x(1)=y$. Let $c$ be the cost given by

\begin{equation}\label{simplecost}
c(x,y) = \inf_{(x(\cdot),u(\cdot))\in\C_x^y} \int_0^1 \sum_{i=1}^k u_i^2 \,dt.
\end{equation}
If $k$ is equal to the dimension $n$ of the manifold $M$, then the distribution $\Delta$ is the same as the tangent bundle $TM$ of $M$ and the admissible paths of the control system (\ref{simplecontrol}) are all the paths on $M$. It also defines a Riemannian metric on $M$ by declaring that the vector fields $X_1,...,X_n$ are orthonormal everywhere. The cost function $c$ is the square of the Riemannian distance $d$: $c=d^2$. And the minimizers of this system correspond to the length minimizing geodesics on $M$. However, this does not work for distributions which are not trivializable. (For instance, any vector field on the $2$-sphere has a zero.)

To overcome this difficulty, we can modify the general definition of control system in the following way.
Let $V$ be a locally trivial bundle on $M$ with bundle projection $\pi^V:V\to M$ and let $F:V\to TM$ be fibre preserving map. i.e. $F(V_x)\subseteq T_xM$.
The control system corresponding to the map $F$ is given by

\begin{equation}\label{generalcontrol}
\dot x(t)=F(v(t)).
\end{equation}
The admissible pairs are locally bounded measurable paths $v(\cdot):[0,1]\to V$ in $V$ such that its projection to the manifold $M$ is a Lipschitz path: $x(\cdot)=\pi^V(v(\cdot))$ is Lipschitz. If we let $V$ be the trivial bundle $M\times U$, we get back the system (\ref{control}). If a Lagrangian $L:V\to\Real$ is fixed, then the corresponding cost function $c$ is defined by

\begin{equation}\label{generalcost}
c(x,y)=\inf_{v(\cdot)\in\C_x^y} \int_0^1 L(v(t)) dt,
\end{equation}
where the infimum is taken over all admissible pair $v(\cdot):[0,1]\to V$ such that the corresponding admissible path $x(\cdot)=\pi^V(v(\cdot))$ satisfies $x(0)=x$ and $x(1)=y$.

Let $<,>$ be a Riemannian metric on the manifold $M$. If $V$ is the tangent bundle $TM$ of $M$, the map $F$ is the identity map and the Lagrangian $L:V\to\Real$ given by $L(v)=<v,v>$, then the cost function $c$ is equal to the square of the Riemannian distance. If $k < n$, then the admissible paths of the control system (\ref{simplecontrol}) are paths tangent to the distribution $\Delta$. Similar to the Riemannian case, the control system defines a subriemannian metric $<,>^S$. (See section 6 for the basics on subriemannian geometry) And the cost (\ref{simplecost}) is the square of the subriemannian distance $d_S$: $c=d_S^2$. For general distributions $\Delta$ which are not trivializable, consider the general control system (\ref{generalcontrol}) with $V=\Delta$. And $F:\Delta\hookrightarrow TM$ is the inclusion map. If the Lagrangian $L$ is defined by $L(v)=<v,v>^S$, then the cost is again the square of the subriemannian distance.

In this paper (except in section 8), we consider the control systems of the form (\ref{control}) in order to avoid heavy notations. All the results have easy generalization to more general intrinsically defined systems just introduced.

\end{rem}

\bigskip

\section{Optimal Mass Transport}

The theory of optimal mass transportation is about moving one mass
to another that minimizes certain cost. More precisely, let $M$ be
a manifold and consider a function $c:M\times M\to
\Real\cup\{+\infty\}$, called the cost function. Let $\mu$ and $\nu$ be two Borel probability measures on the manifold $M$, then the optimal mass transportation is the following problem:

\begin{prob}\label{optimal}

Find a Borel map which achieves the following infimum among all Borel maps $\phi:M\to M$ that pushes the probability measure $\mu$ forward to $\nu$

\[\inf_{\phi_*\mu=\nu}\int_M c(x,\phi(x)) \,d\mu.\]

\end{prob}

Here, we recall that the push forward $\phi_*\mu$ is defined by $\phi_*\mu(B)=\mu(\phi^{-1}(B))$ for all Borel set $B$ in $M$. In many cases, such a problem admits solution which is unique (up
to measure zero), assuming absolute continuity of the measure $\mu$ with respect to the Lebesgue measure. This unique solution to (\ref{optimal}) is called the optimal map or the Brenier map.

The first optimal map was found by Brenier in \cite{Br} in the case where the manifold is $\Real^n$ and the cost was $c(x,y)=|x-y|^2$. Later, it was generalized to arbitrary closed, connected Riemannian manifolds in \cite{Mc} with cost given by square of the Riemannian distance. The case for the Heisenberg group with the cost given by $d^2$ was done in \cite{AmRi}, where $d$ is the subriemannian distance or the gauge distance. In \cite{BeBu}, a much general cost given by the action associated to a Lagrangian function $L:TM\to\Real$ on a compact manifold $M$ was considered. More precisely,

\begin{equation}\label{Lag}
c(x,y)=\inf_{x(0)=x,x(1)=y}\int_0^1L(x(t),\dot x(t))dt,
\end{equation}
where the infimum is taken over all curves joining the points $x$ and $y$.

Existence and uniqueness of optimal map with the cost given by (\ref{Lag}) was shown under the following assumptions:
\begin{itemize}
  \item The Lagrangian $L$ is fibrewise strictly convex, i.e. the map restriction of $L$ to the tangent space $T_xM$ is strictly convex for each fixed $x$ in the manifold $M$.
  \item $L$ has superlinear growth, i.e. $L(v)/|v|\to 0$ as $|v|\to\infty$.
  \item The cost $c$ is complete, i.e. the infimum (\ref{Lag}) is always achieved by some $C^2$ smooth paths.
\end{itemize}
Recently, the compactness assumption on the manifold or on the measures was eliminated \cite{Fi,FaFi}.

In this paper, we consider a connected manifold $M$ without boundary and the cost function
$c$ is given by (\ref{cost}). Consider the following relaxed version of Problem
\ref{optimal}, called Kantorovich reformulation. Let
$\pi_1:M\times M\to M$ and $\pi_2:M\times M\to M$ be the
projection onto the first and the second component respectively.
Let $\Gamma$ be the set of all joint measures $\Pi$ on the product
manifold $M\times M$ with marginals $\mu$ and $\nu$: $\pi_{1*}\Pi=\mu$ and
$\pi_{2*}\Pi=\nu$.

\begin{prob}\label{KanRe}
\[
C(\mu,\nu):=\inf_{\Pi\in\Gamma}\int_{M\times M}c(x,y)\, d\Pi(x,y)
\]
\end{prob}

\begin{rem}
If $\phi$ is an optimal map in the problem in (\ref{optimal}),
then $(id\times\phi)_*\mu$ is a joint measure in the set $\Gamma$.
Therefore, Problem \ref{KanRe} is a relaxation of the problem in
(\ref{optimal}).
\end{rem}

Before we proceed into the existence proof of optimal map, let us look at
the following dual problem of Kantorovich. See \cite{Vi1} for
history and the importance of this dual problem to optimal
transport.

Let $c$ be a cost function and let $f$ be a function on the manifold $M$. The
$c_1$-transform of the function $f$ is the function $f^{c_1}$
given by
\[
f^{c_1}(y):=\inf_{x\in M}[c(x,y)-f(x)].
\]

Similarly, the
$c_2$-transform of the function $f$ is given by
\[f^{c_2}(x):=\inf_{y\in M}[c(x,y)-f(y)].\] The function $f$ is a $c$-concave
function if it satisfies $f^{c_1c_2}=f$. Let $\mathfrak{F}$ be the
set of all pairs of functions $(g,h)$ on the manifold such that $g:M\to\Real\cup\{-\infty\}$ and  $h:M\to\Real\cup\{-\infty\}$ are in $L^1(\mu)$ and $L^1(\nu)$ respectively, and
$g(x)+h(y)\leq c(x,y)$ for all $(x,y)\in M\times M$. The dual problem of Kantorovich is the
following maximization problem:

\begin{prob}\label{dual}
\[
\sup_{(g,h)\in\mathfrak{F}}\int_M gd\mu+\int_M h\,d\nu.
\]
\end{prob}

The existence of solution to the above problem is well-known. See \cite{Vi1} and \cite{Vi2} for the proof.

\medskip

\begin{thm}\label{dualexist}
Assume that there exists two functions $c_1$ and $c_2$ such that $c_1$ is $\mu$-measurable, $c_2$ is $\nu$-measurable and the cost function $c$ satisfies $c(x,y)\leq c_1(x)+c_2(y)$ for all $(x,y)$ in $M\times M$. If $c$ is also continuous, bounded below and $C(\mu,\nu)<\infty$, then there exists a $c$-concave function $f$ such that the function $f$ is in $L^1(\mu)$, its $c_1$-transform $f^{c_1}$ is in $L^1(\nu)$ and the pair $(f,f^{c_1})$ achieves the supremum in Problem \ref{dual}.
\end{thm}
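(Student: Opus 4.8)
The plan is to run the classical direct-method argument for Kantorovich duality in the form given, for instance, in Villani's books, adapted to the hypothesis that $c$ is bounded below and majorized by a sum $c_1(x)+c_2(y)$ with $c_1\in L^1(\mu)$, $c_2\in L^1(\nu)$. First I would reduce to the case where $c\geq 0$ by subtracting the lower bound (a constant), which changes the value $C(\mu,\nu)$ and the functional in Problem \ref{dual} by the same constant and does not affect $c$-concavity up to an additive constant; so assume $0\leq c(x,y)\leq c_1(x)+c_2(y)$. The domination hypothesis guarantees that the pair $(-c_1,-c_2)$ (after replacing $c_1,c_2$ by their positive parts, still in $L^1$) lies in $\mathfrak F$, so the supremum in Problem \ref{dual} is over a nonempty set and is bounded above by $C(\mu,\nu)<\infty$ via the trivial inequality $\int g\,d\mu+\int h\,d\nu\leq\int c\,d\Pi$ valid for every $(g,h)\in\mathfrak F$ and every $\Pi\in\Gamma$.

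Next I would take a maximizing sequence $(g_n,h_n)\in\mathfrak F$ and \emph{improve} it by the standard $c$-transform trick: replace $g_n$ by $h_n^{c_1}$, then replace $h_n$ by $(h_n^{c_1})^{c_2}$. Because $c$-transforming can only increase the admissible functions pointwise while keeping the pair in $\mathfrak F$, and because double $c$-transforming stabilizes, one may assume $g_n=f_n$ is $c$-concave and $h_n=f_n^{c_1}$; the value $\int f_n\,d\mu+\int f_n^{c_1}\,d\nu$ still converges to the supremum. The key estimates are the two-sided bounds: from $g(x)+h(y)\leq c(x,y)\leq c_1(x)+c_2(y)$ one gets $f_n(x)\leq c_1(x)+\bigl(c_2(y_0)-f_n^{c_1}(y_0)\bigr)$ for a suitable base point, and symmetrically for $f_n^{c_1}$; combined with continuity of $c$ and the normalization freedom $(f_n,f_n^{c_1})\mapsto(f_n+a_n,f_n^{c_1}-a_n)$, this pins the sequence into an equi-bounded, equi-continuous (indeed the $c$-concave functions inherit a modulus of continuity from $c$ on compacts) family.

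Then I would extract, via Arzelà–Ascoli on an exhausting sequence of compact sets plus a diagonal argument, a subsequence $f_n\to f$ locally uniformly, with $f$ again $c$-concave; one checks $f_n^{c_1}\to f^{c_1}$ pointwise (using local uniform convergence and continuity of $c$) and that $f\leq c_1+\text{const}$, $f^{c_1}\leq c_2+\text{const}$, so $f\in L^1(\mu)$ and $f^{c_1}\in L^1(\nu)$. Passing to the limit in $\int f_n\,d\mu+\int f_n^{c_1}\,d\nu$ by dominated convergence (the $L^1$ majorants $c_1,c_2$ being exactly what the hypothesis supplies) yields that $(f,f^{c_1})$ attains the supremum, and replacing $f$ by $f^{c_1c_2}$ makes it $c$-concave without decreasing the value since $f\leq f^{c_1c_2}$ and $f^{c_1c_2c_1}=f^{c_1}$.

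The main obstacle is the non-compactness of $M$: the compactness/equicontinuity of the maximizing sequence is not automatic and must be squeezed out entirely from the domination $c\le c_1+c_2$ together with the normalization $f_n\mapsto f_n+a_n$ to fix the value at one point, and one must be careful that the limiting functions are genuinely in $L^1(\mu)$ and $L^1(\nu)$ rather than merely finite $\mu$- and $\nu$-a.e.; here the integrable upper bounds $c_1,c_2$ are doing the essential work, and the continuity of $c$ is what upgrades the abstract measurable duality to a pair of continuous (hence everywhere-defined) potentials. I would expect the detailed verification that the $c$-transform preserves $\mathfrak F$ and that $f^{c_1c_2}=f$ for the limit to be routine, and the extraction of the convergent subsequence to be the step needing the most care.
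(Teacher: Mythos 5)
The paper does not actually prove this theorem; it is quoted from Villani's books. Your proposal reproduces the classical compact-space argument, and it has a genuine gap exactly at the step you yourself flag as the delicate one: the Arzel\`a--Ascoli extraction. After the double-transform trick you have $f_n(x)=\inf_{y\in M}\bigl[c(x,y)-f_n^{c_1}(y)\bigr]$, an infimum over the whole \emph{non-compact} manifold of the family $\{c(\cdot,y)+\mathrm{const}\}_{y\in M}$. Mere continuity of $c$ does not make this family uniformly equicontinuous on compacta (the modulus of continuity of $x\mapsto c(x,y)$ may degenerate as $y\to\infty$), so the $f_n$ need not be equicontinuous. Equiboundedness also fails: the only available pointwise lower bound is $f_n(x)\ge f_n(x_0)+\inf_{y}\bigl[c(x,y)-c(x_0,y)\bigr]$, and the hypothesis $c\le c_1\oplus c_2$ bounds $c$ from above, not the differences $c(x,\cdot)-c(x_0,\cdot)$ from below, so this infimum can be $-\infty$. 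Indeed, in this generality the optimal potential may genuinely take the value $-\infty$ (the statement allows $f:M\to\Real\cup\{-\infty\}$), so it cannot arise as a locally uniform limit of continuous real-valued functions. A further minor slip: dominated convergence is not applicable since $c_1,c_2$ give only one-sided majorants; one should use reverse Fatou for the upper-bounded parts and deduce $f\in L^1(\mu)$ a posteriori from attainment of the (finite) supremum.

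The proof the paper is implicitly invoking avoids compactness of the maximizing sequence altogether: one first obtains an optimal plan $\Pi$ for Problem~\ref{KanRe} (tightness of $\Gamma$ plus lower semicontinuity of the cost functional), shows that $\mathrm{supp}\,\Pi$ is $c$-cyclically monotone, and then defines $f$ explicitly by the Rockafellar--R\"uschendorf formula as an infimum over finite chains of points of $\mathrm{supp}\,\Pi$; the domination $c\le c_1\oplus c_2$ then yields $f\le c_1+\mathrm{const}$, $f^{c_1}\le c_2+\mathrm{const}$ and the $L^1$ statements, and $c$-cyclical monotonicity gives $f(x)+f^{c_1}(y)=c(x,y)$ on $\mathrm{supp}\,\Pi$, hence attainment. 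If you prefer to keep your direct approach, you must either assume $M$ compact (so that $c$ is uniformly continuous and your equicontinuity claim becomes true), or assume compactly supported measures and truncate the potentials to $-\infty$ off a compact set so that the transforms become infima over a compact set --- which is precisely the device the paper uses later in the proof of Theorem~\ref{dualregular}.
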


\medskip

The following theorem on the regularity of the dual pair above is also well-known.

\begin{thm}\label{dualregular}
Assume that the cost $c(x,y)$ is continuous, bounded below and the measures $\mu$ and $\nu$ are compactly supported. Then the functions $f$ and $f^{c_1}$ are upper semicontinuous. If the function $x\mapsto c(x,y)$ is also locally Lipschitz on an set $\mathcal U$ and the Lipschitz constant is independent of $y$ locally, then  $f$ can be chosen to be locally Lipschitz on $\mathcal U$.
\end{thm}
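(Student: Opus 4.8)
The plan is to prove Theorem \ref{dualregular} in two stages, first upper semicontinuity of the optimal dual pair, and then, under the extra Lipschitz hypothesis on $c$, local Lipschitz continuity of $f$. Throughout I would work with the pair $(f,f^{c_1})$ produced by Theorem \ref{dualexist}, using that $f$ is $c$-concave, i.e. $f = f^{c_1 c_2} = (f^{c_1})^{c_2}$.

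For the semicontinuity claim I would exploit the structure of the $c$-transforms directly. Since $f = (f^{c_1})^{c_2}(x) = \inf_{y}[c(x,y) - f^{c_1}(y)]$, the function $f$ is an infimum of a family of functions $x \mapsto c(x,y) - f^{c_1}(y)$; each such function is continuous in $x$ because $c$ is continuous, hence $f$ is upper semicontinuous as a pointwise infimum of continuous functions. The same argument applied to $f^{c_1}(y) = \inf_x[c(x,y) - f(x)]$ shows $f^{c_1}$ is upper semicontinuous once $f$ is known to be, say, measurable and the family $x\mapsto c(x,y)-f(x)$ makes sense; here the bounded-below hypothesis on $c$ and the $L^1$ integrability keep everything finite, and compact support of the measures means the relevant infima can be restricted to compact sets, so no issues with the infima being $-\infty$ on a set of positive measure. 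A small point to check is that $c$-concavity plus $c$ bounded below forces $f$ to be bounded above on the support, which is what lets the infimum defining $f^{c_1}$ be attained or at least be a genuine usc function.

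For the Lipschitz statement, suppose $x \mapsto c(x,y)$ is locally Lipschitz on an open set $\mathcal U$ with a local Lipschitz constant $K$ that can be taken uniform in $y$ (on the relevant compact set). Fix a point $x_0 \in \mathcal U$ and a small ball $B \subset \mathcal U$ on which this uniform Lipschitz bound holds. For $x_1, x_2 \in B$, write $f(x_1) = \inf_y[c(x_1,y) - f^{c_1}(y)]$; choosing a near-optimal $y$ for $x_2$ and using it as a competitor for $x_1$ gives $f(x_1) \le c(x_1,y) - f^{c_1}(y) \le c(x_2,y) - f^{c_1}(y) + K|x_1 - x_2| \le f(x_2) + \varepsilon + K|x_1-x_2|$, and letting $\varepsilon \to 0$ and swapping the roles of $x_1, x_2$ yields $|f(x_1) - f(x_2)| \le K|x_1 - x_2|$. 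Two technical wrinkles must be handled: the near-optimal $y$ must be confined to a fixed compact set independent of $x$ near $x_0$ (this follows from compact support of $\nu$ together with the bound $c(x,y) \le c_1(x) + c_2(y)$, which controls where the infimum can be nearly achieved), and the local Lipschitz constant of $c(\cdot,y)$ must indeed be uniform over that compact set of $y$'s, which is exactly the hypothesis. Replacing $f$ by the usc representative from the first part ensures we are modifying $f$ only on a null set, so the chosen $f$ is simultaneously usc and locally Lipschitz on $\mathcal U$.

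The main obstacle is the uniform control of the minimizing $y$: the clean competitor argument for Lipschitz continuity collapses if, as $x$ ranges over a neighborhood of $x_0$, the near-minimizers $y$ escape to regions where the Lipschitz bound on $c(\cdot,y)$ degrades. So the crux of the proof is a coercivity-type lemma showing that for $x$ near $x_0$ the infimum defining $f(x)$ is (nearly) achieved on a fixed compact set; here is where the domination hypothesis $c(x,y)\le c_1(x)+c_2(y)$ and the compact support of $\mu,\nu$ really earn their keep. Once that localization is in hand, everything else is the standard soft argument that $c$-transforms inherit modulus-of-continuity bounds from $c$.
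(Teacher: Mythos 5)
Your first part is fine and agrees in substance with the paper: $f=(f^{c_1})^{c_2}$ is a pointwise infimum of the continuous functions $x\mapsto c(x,y)-f^{c_1}(y)$, hence upper semicontinuous, and similarly for $f^{c_1}$. The problem is in the second part, and you have located it yourself: the competitor argument needs the near-minimizing $y$ to stay in a fixed compact set, since the hypothesis only provides a Lipschitz constant for $c(\cdot,y)$ that is uniform as $y$ ranges over compact sets. But your proposed justification --- that this localization ``follows from compact support of $\nu$ together with the bound $c(x,y)\le c_1(x)+c_2(y)$'' --- does not work. That bound is an \emph{upper} bound on $c$ and yields no coercivity of $y\mapsto c(x,y)-f^{c_1}(y)$; nothing prevents this function from approaching its infimum only along a sequence $y_n$ leaving every compact set, and compact support of $\nu$ constrains only the integral $\int f^{c_1}\,d\nu$, not the values of $f^{c_1}$ far from $\mathrm{supp}(\nu)$. (Theorem \ref{existjoint} does give, for $\mu$-a.e.\ $x$, a minimizing $y$ in $\mathrm{supp}(\nu)$, but an almost-everywhere statement cannot drive the two-point argument at every pair of points of a neighborhood in $\mathcal U$.)

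The paper closes this gap not by proving a coercivity lemma but by \emph{replacing the potential} --- which is exactly why the theorem says $f$ ``can be chosen'' locally Lipschitz. Take a compact $K$ containing $\mathrm{supp}(\mu)\cup\mathrm{supp}(\nu)$, set $g'=f^{c_1}$ on $K$ and $g'=-\infty$ off $K$, and put $h=(g')^{c_2}$. The pair $(h,h^{c_1})$ is still admissible and still optimal for Problem \ref{dual}, since the truncation changes nothing under the integrals against $\mu$ and $\nu$; and now, by construction, $h(x)=\inf_{y\in K}[c(x,y)-f^{c_1}(y)]$ is an infimum over the compact set $K$ only. Your competitor argument then goes through verbatim with $h$ in place of $f$. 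Without this device, or a genuine proof of the localization you assumed, the Lipschitz half of your argument does not close.
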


\begin{proof}

Fix $\e>0$. Since $f(x)=\inf_{x\in M}[c(x,y)-f^{c_1}(y)]$, there exists $y$ such that $f(x)+\e/2>c(x,y)-f^{c_1}(y)$. Also, we have $f(x')+f^{c_1}(y)\leq c(x',y)$ for any $x'$ in $M$. So, combining the above equations and continuity of the cost $c$, we have
\[
f(x')-f(x)<\e
\]
for any $x'$ close enough to $x$. Therefore, $f$ is upper semicontinuous.

Let $K$ be a compact set containing the support of the measures $\mu$ and $\nu$.
Let $$g(x)=\left\{
         \begin{array}{ll}
           f(x), & \hbox{\text{if } $x\in K$} \\
           -\infty, & \hbox{\text{if } $x\in M\setminus K$}
         \end{array}
       \right.,\ g'(x)=\left\{
         \begin{array}{ll}
           f^{c_1}(x), & \hbox{\text{if } $x\in K$} \\
           -\infty, & \hbox{\text{if } $x\in M\setminus K$}
         \end{array}
       \right.,$$
then the pair $(g,g')$ achieves the maximum in Problem \ref{dual}. Let $h=(g')^{c_2}$,
then the pair $(h,h^{c_1})$ also achieves the maximum. By definition of $g'$, we
have $h(x)=\inf\limits_{y\in K}[c(x,y)-f^{c_1}(y)]$.
By an argument the same as the proof of upper semicontinuity, for any $x$ and $x'$ in the compact subset $K'$ of $\mathcal U$, we can find $y$ in $K$ such that
\[
h(x')-h(x)<c(x,y)-c(x',y)+\e/2.
\]
By the assumption of the cost $c$, the above inequality becomes
\[
h(x')-h(x)\leq kd(x,x')+\e/2
\]
for some constant $k > 0$ which is independent of $x$ on $K'$. By switching the roles of $x$ and $x'$, the result follows.
\end{proof}

\medskip

The following theorem about minimizers of the Problem \ref{KanRe} is well-known. See, for instance, \cite{Vi1}.

\medskip

\begin{thm}\label{existjoint}
If we make the same assumption as in Theorem \ref{dualexist}, then Problem \ref{KanRe} admits a minimizer. Moreover, the joint measure $\Pi$ in the set $\Gamma$ achieve the infimum in Problem \ref{KanRe} if and only if $\Pi$ is concentrated on the set

\[
\{(x,y)\in M\times M|f(x)+f^c(y)=c(x,y)\}.
\]

\end{thm}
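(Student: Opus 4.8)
The plan is to split the statement into two parts: existence of a minimizer for Problem \ref{KanRe}, and the characterization of the optimal $\Pi$ as those concentrated on the set where $f(x)+f^{c}(y)=c(x,y)$ (here $f^c$ denotes the relevant $c$-transform, i.e. $f^{c_1}$ in the notation of Theorem \ref{dualexist}). For existence, I would invoke the direct method in the weak-$*$ topology. The set $\Gamma$ of joint measures with marginals $\mu$ and $\nu$ is tight, since $\mu$ and $\nu$ are themselves tight (as Borel probability measures on a manifold), so by Prokhorov's theorem $\Gamma$ is relatively weak-$*$ compact; it is also weak-$*$ closed, since the marginal constraints $\pi_{1*}\Pi=\mu$, $\pi_{2*}\Pi=\nu$ are preserved under weak-$*$ limits (test against $C_b$ functions pulled back from each factor). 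Hence $\Gamma$ is weak-$*$ compact. The functional $\Pi\mapsto\int_{M\times M}c\,d\Pi$ is lower semicontinuous with respect to weak-$*$ convergence because $c$ is continuous and bounded below — one writes $c=\sup_n c_n$ with $c_n=\min(c,n)\wedge$ (a continuous cutoff) bounded and continuous, and each $\Pi\mapsto\int c_n\,d\Pi$ is weak-$*$ continuous, so the supremum is lower semicontinuous. Combined with $C(\mu,\nu)<\infty$, a minimizing sequence has a weak-$*$ convergent subsequence whose limit attains the infimum.

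For the characterization, the ``if'' direction is the easy one: suppose $\Pi_0\in\Gamma$ is concentrated on $S:=\{(x,y):f(x)+f^c(y)=c(x,y)\}$. Then by Theorem \ref{dualexist} the pair $(f,f^c)$ lies in $\mathfrak F$, so
\[
\int_{M\times M}c\,d\Pi_0=\int_{M\times M}\big(f(x)+f^c(y)\big)\,d\Pi_0=\int_M f\,d\mu+\int_M f^c\,d\nu,
\]
using that $\Pi_0$ has marginals $\mu,\nu$ and that $f\in L^1(\mu)$, $f^c\in L^1(\nu)$. On the other hand for any $\Pi\in\Gamma$ and any admissible $\Pi$ we have $\int c\,d\Pi\ge\int f\,d\mu+\int f^c\,d\nu$ because $f(x)+f^c(y)\le c(x,y)$ pointwise; so $\Pi_0$ achieves the infimum, and moreover this shows the infimum of Problem \ref{KanRe} equals the supremum of Problem \ref{dual} (weak duality made sharp by Theorem \ref{dualexist}). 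For the ``only if'' direction, let $\Pi_0$ be any minimizer. From the chain of inequalities
\[
\int_M f\,d\mu+\int_M f^c\,d\nu\le\int_{M\times M}\big(f(x)+f^c(y)\big)\,d\Pi_0\le\int_{M\times M}c\,d\Pi_0=C(\mu,\nu),
\]
the equality $C(\mu,\nu)=\int f\,d\mu+\int f^c\,d\nu$ just established forces both inequalities to be equalities; in particular $\int_{M\times M}\big(c(x,y)-f(x)-f^c(y)\big)\,d\Pi_0=0$. Since the integrand is nonnegative, it vanishes $\Pi_0$-almost everywhere, i.e. $\Pi_0$ is concentrated on $S$.

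The main obstacle is making the lower semicontinuity of $\Pi\mapsto\int c\,d\Pi$ and the integrability bookkeeping fully rigorous: one must be careful that $c$, though continuous and bounded below, may be $+\infty$ on part of $M\times M$ (where $\C_{x_0}^{x_1}=\emptyset$), so $c$ is only lower semicontinuous as an extended-real function, and the $\sup$ of bounded continuous truncations argument is what handles this cleanly. One also needs the hypothesis $c(x,y)\le c_1(x)+c_2(y)$ with $c_1\in L^1(\mu)$, $c_2\in L^1(\nu)$ to guarantee $\int c\,d\Pi<\infty$ for at least one $\Pi$ (e.g. the product measure $\mu\otimes\nu$), which together with $C(\mu,\nu)<\infty$ keeps all the integrals above finite and the subtractions legitimate. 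Since Theorems \ref{dualexist} and \ref{dualregular} are already available, the remaining work is exactly this measure-theoretic tightening, and the argument above is the standard one; I would present it in the compressed form shown rather than reproving Prokhorov or the duality theorem.
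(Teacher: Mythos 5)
First, note that the paper does not actually prove this theorem: it is stated as well-known and delegated to \cite{Vi1}, so your proposal is being compared against a citation rather than an argument of the paper's own. What you write is the standard textbook proof (tightness plus Prokhorov for existence, complementary slackness for the characterization), and the existence half together with the ``if'' half of the characterization are correct as written: lower semicontinuity of $\Pi\mapsto\int c\,d\Pi$ via monotone truncation of the continuous, bounded-below, possibly $+\infty$-valued cost is fine, and the integrability bookkeeping using $f\in L^1(\mu)$, $f^{c_1}\in L^1(\nu)$ is exactly what is needed.

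There is, however, a circularity in your ``only if'' direction. That direction requires strong duality, $C(\mu,\nu)=\int_M f\,d\mu+\int_M f^{c_1}\,d\nu$. You claim this equality was ``just established,'' but in your ``if'' direction it is established only under the hypothesis that some $\Pi_0\in\Gamma$ concentrated on $S$ exists; if no such plan exists, that direction is vacuous and yields nothing. The existence of a plan concentrated on $S$ is precisely what the ``only if'' direction (applied to the minimizer you constructed) is supposed to deliver, so you cannot feed one into the other. Theorem \ref{dualexist}, as literally stated in the paper, gives only attainment of the dual supremum, not that this supremum equals $C(\mu,\nu)$; weak duality gives only $\sup\le\inf$. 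The fix is to invoke the full Kantorovich duality theorem (the statement actually proved in \cite{Vi1}, of which Theorem \ref{dualexist} records only the attainment part), or to establish $\inf=\sup$ independently, e.g.\ via $c$-cyclical monotonicity of the minimizer or a Fenchel--Rockafellar argument. Once strong duality is in hand, your complementary-slackness computation $\int_{M\times M}\bigl(c(x,y)-f(x)-f^{c_1}(y)\bigr)\,d\Pi_0=0$ with nonnegative integrand closes the proof exactly as you describe.
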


\bigskip

\section{Existence and Uniqueness of Optimal Map}

In this section, we show that Monge's problem with cost given by an optimal control cost (\ref{controlcost}) can be solved under certain regularity assumptions. Let $H:T^*M\to\Real$ be the function defined by
\[
H(p_x)=\max_{u\in U}\left(p_x(F(x,u))-L(x,u)\right).
\]
If $H$ is well-defined and $C^2$, then we denote its Hamiltonian vector field by $\overrightarrow{H}$ and let $e^{t\overrightarrow{H}}$ be its flow. Let $f$ be the function defined in Theorem \ref{dualexist} which is Lipschitz for $\mu$-almost all $x$. Consider the map $\varphi:M\times[0,1]\to M$ defined by $\varphi(x,t)=\pi(e^{t\overrightarrow{H}}(-df_x))$.

\begin{thm}\label{main}
The map $x\mapsto\varphi_1(x):=\varphi(x,1)$ is the unique (up to $\mu$-measure zero) optimal map to the problem (\ref{optimal}) with cost $c$ given by (\ref{cost}) under the following assumptions:

\begin{enumerate}
  \item The measures $\mu$ and $\nu$ are compactly supported and absolutely continuous with respect to the Lebesgue measure.
  \item $c$ is bounded below and $c(x,y)$ is also locally Lipschitz in the $x$ variable and the Lipschitz constant is independent of $y$ locally.
  \item The cost $c$ is complete, i.e. given any pairs of points $(x_0,x_1)$ in the manifold $M$, there exists an admissible pair $(x(\cdot),u(\cdot))$ such that the pair achieves the infimum in (\ref{cost}), $u(\cdot)$ is locally bounded measurable, $x(0)=x_0$ and $x(1)=x_1$.
  \item The Hamiltonian function $H$ defined in (\ref{maxHam}) is well-defined and $C^2$.
  \item The Hamiltonian vector field $\overrightarrow{H}$ is complete, i.e. global flow exists.
\end{enumerate}
\end{thm}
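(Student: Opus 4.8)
The plan is to run the now-standard Brenier--McCann--Bernard--Buffoni scheme, the one genuinely new ingredient being that the transport map is extracted from the Pontryagin Maximum Principle for the Bolza problem (Theorem~\ref{PMPB}) applied to the \emph{time-reversed} control system, so that the dual potential $f$ becomes the terminal cost. First, under (1)--(3) the hypotheses of Theorems~\ref{dualexist}, \ref{dualregular} and \ref{existjoint} are met: $c$ is bounded below, and on the compact set $\operatorname{supp}\mu\times\operatorname{supp}\nu$ it is finite (by completeness (3)) and bounded (by continuity of $c$), so $C(\mu,\nu)<\infty$. This produces a $c$-concave $f\in L^1(\mu)$, locally Lipschitz near $\operatorname{supp}\mu$, together with an optimal plan $\Pi\in\Gamma$ concentrated on the contact set $S=\{(x,y):f(x)+f^{c_1}(y)=c(x,y)\}$. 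By (1) and Rademacher's theorem $f$ is differentiable $\mu$-a.e.; let $\EssD$ be the set of $x$ at which $f$ is differentiable \emph{and} the contact fibre $S_x=\{y:(x,y)\in S\}$ is nonempty. Then $\mu(M\setminus\EssD)=0$, the second requirement failing only on a $\mu$-null set because $\Pi$ is a probability measure carried by $S$ with first marginal $\mu$.

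The crux is to show that for $x_0\in\EssD$ and $y_0\in S_{x_0}$ one has $y_0=\varphi_1(x_0)$. The defining property of $S$ says $x\mapsto c(x,y_0)-f(x)$ attains its minimum $f^{c_1}(y_0)$ at $x_0$, so $df_{x_0}$ is a subgradient of $c(\cdot,y_0)$ there. By completeness (3) fix an optimal admissible pair $(\tl x(\cdot),\tl u(\cdot))$ from $x_0$ to $y_0$. For the reversed control system $\dot y=-F(y,v)$, steering $y_0$ to a point $z$ costs exactly $c(z,y_0)$ (the Remark following~\eqref{cost}), hence every admissible pair $(y(\cdot),v(\cdot))$ of the reversed system with $y(0)=y_0$ satisfies
\[
\int_0^1 L(y,v)\,ds-f(y(1))\ \ge\ c(y(1),y_0)-f(y(1))\ \ge\ f^{c_1}(y_0),
\]
with equality for $(y(s),v(s)):=(\tl x(1-s),\tl u(1-s))$, since then $y(1)=x_0$ and $(x_0,y_0)\in S$. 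Thus this pair solves the Bolza problem for the reversed system with terminal cost $f$. As $f$ is (sub)differentiable at $y(1)=x_0$, Theorem~\ref{PMPB} gives a Lipschitz cotangent lift $p(\cdot)$ of $y(\cdot)$ with $p(1)=-df_{x_0}$, $\dot p(s)=\overrightarrow K_{v(s)}(p(s))$ and $K_{v(s)}(p(s))=\min_u K_u(p(s))$ a.e., where $K_u(p_y)=-p_y(F(y,u))+L(y,u)$ is the control Hamiltonian of the reversed system. Setting $q(t):=p(1-t)$, this lifts $\tl x$, has $q(0)=-df_{x_0}$, and since $K_u=-H_u$ with $H_u(p_x):=p_x(F(x,u))-L(x,u)$, reversing time turns the equations for $p$ into $\dot q(t)=\overrightarrow H_{\tl u(t)}(q(t))$ together with $H_{\tl u(t)}(q(t))=\max_u H_u(q(t))=H(q(t))$ a.e., $H$ being the maximised Hamiltonian of assumption (4). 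Because $H$ is differentiable and each $H_u$ is smooth and affine in the fibre, the envelope principle yields $\overrightarrow H_{\tl u(t)}(q(t))=\overrightarrow H(q(t))$ whenever $\tl u(t)$ is a maximising control; so $q$ is a Lipschitz solution of $\dot q=\overrightarrow H(q)$. As $\overrightarrow H$ is $C^1$ (by (4)) and complete (by (5)), uniqueness of integral curves forces $q(t)=e^{t\overrightarrow H}(-df_{x_0})$, whence
\[
y_0=\tl x(1)=\pi(q(1))=\pi\!\left(e^{\overrightarrow H}(-df_{x_0})\right)=\varphi_1(x_0),
\]
and therefore $S_{x_0}=\{\varphi_1(x_0)\}$ for every $x_0\in\EssD$.

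It remains to assemble existence, optimality and uniqueness. The map $\varphi_1$ is $\mu$-measurable since $x\mapsto -df_x$ is measurable and $\pi\circ e^{\overrightarrow H}$ is continuous. By the previous step $\Pi$ is concentrated on the graph of $\varphi_1$; disintegrating $\Pi$ along its first marginal $\mu$ gives $\Pi=(\operatorname{id}\times\varphi_1)_*\mu$, and reading off the second marginal gives $(\varphi_1)_*\mu=\nu$, so $\varphi_1$ is an admissible transport map. Its cost is $\int_M c(x,\varphi_1(x))\,d\mu=\int_{M\times M}c\,d\Pi=C(\mu,\nu)\le\int_M c(x,\psi(x))\,d\mu$ for every $\psi$ with $\psi_*\mu=\nu$, because $(\operatorname{id}\times\psi)_*\mu\in\Gamma$; hence $\varphi_1$ is optimal. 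Finally, if $\psi$ is any optimal map then $(\operatorname{id}\times\psi)_*\mu$ is an optimal plan, so by Theorem~\ref{existjoint} it is concentrated on $S$; thus $(x,\psi(x))\in S$ for $\mu$-a.e.\ $x$, and the identification above gives $\psi=\varphi_1$ $\mu$-a.e.

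The first and last paragraphs are routine duality and measure theory; the real work is the identification step. The decisive manoeuvre is to reverse the control system so that the contact identity $f(x_0)+f^{c_1}(y_0)=c(x_0,y_0)$ becomes optimality for a \emph{free-endpoint} Bolza problem, whose PMP transversality condition then reads off precisely $-df_{x_0}$ as the initial covector of the extremal; one must then verify --- and this is where $H\in C^2$ (assumption (4)) is really used --- that the extremal is a genuine integral curve of $\overrightarrow H$ rather than only of the control-dependent field $\overrightarrow H_{\tl u(t)}$, which is the envelope argument, while keeping the signs relating $K_u$, $H_u$ and $H$ straight. I expect this to be the main obstacle. It is worth noting that no abnormal-minimizer difficulty arises here: Theorem~\ref{PMPB} is invoked directly, already in the ``normal'' form appropriate to a problem with free right endpoint, which is exactly why the Bolza reformulation is used.
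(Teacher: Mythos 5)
Your proposal is correct and follows essentially the same route as the paper: Kantorovich duality to get the $c$-concave potential $f$ and a plan concentrated on the contact set, time-reversal of the control system so that the contact identity becomes a free-endpoint Bolza problem (the paper's Propositions \ref{dualmin} and \ref{appPMP}), the PMP transversality condition identifying $-df_x$ as the initial covector, the first-order touching (envelope) argument that $\overrightarrow H_{\tl u(t)}=\overrightarrow H$ along the extremal, and ODE uniqueness (the paper's Proposition \ref{dualunique}). The only differences are organizational — the paper factors the identification step into three propositions while you present it as one narrative — so no further comment is needed.
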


The rest of this section is devoted to the proof of Theorem \ref{main}. Let $\tl \C_y$ be the set of all admissible pairs such that the corresponding admissible paths $x(\cdot)$ starts from the point $y$: $x(0)=y$ and satisfies the following control system:

\begin{equation}\label{backcontrol}
\dot x(t)=-F(x(t),u(t)).
\end{equation}

Let $\tl \C_y^x$ be the set of all those pairs in $\tl \C_y$ such that the corresponding admissible paths $x(\cdot)$ end at the point $x$: $x(1)=x$.

First, we have the following simple observation.

\medskip

\begin{prop}\label{dualmin}
Let $x$ be a point which achieves the infimum
$f^{c_1}(y)=\inf\limits_{x\in M}\left(c(x,y)-f(x)\right)$ and let $(\tl x,\tl u)$ be an admissible pair in $\tl \C_y^x$ such that the corresponding admissible path $\tl x$ minimizes the cost given by

\[
c(x,y)=\inf_{(x(\cdot),u(\cdot))\in\tl \C_y^x}\int_0^1L(x(t),u(t))\,dt,
\]
then $(\tl x(\cdot),\tl u(\cdot))$ achieves the following infimum

\begin{equation}\label{dualmin1}
f^{c_1}(y)=\inf_{(x(\cdot),u(\cdot))\in\tl \C_y}\int_0^1L(x(s),u(s))\,ds-f(x(1)).
\end{equation}

If $\hat x(t)=\tl x(1-t)$, then $\hat x$ achieves the following infimum
\begin{equation}\label{dualmin2}
f^{c_1}(y)=\inf_{(x(\cdot),u(\cdot))\in\C^y}\int_0^1L(x(s),u(s))\,ds-f(x(0)),
\end{equation}
where $\C^y$ denotes the set of all admissible pairs $(x(\cdot),u(\cdot))$ satisfying the following control system:
\[
\dot x(t)=F(x(t),u(t)),\quad x(1)=y.
\]

\end{prop}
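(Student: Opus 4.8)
The plan is to reduce both assertions to the definition of the $c_1$-transform together with the time-reversal characterization of the cost recorded in the Remark following (\ref{cost}). Since the point $x$ attains the infimum defining $f^{c_1}(y)$, we have $f^{c_1}(y)=c(x,y)-f(x)$. For the lower bound in (\ref{dualmin1}) I would take an arbitrary pair $(x(\cdot),u(\cdot))\in\tl \C_y$ and set $x':=x(1)$, so that $(x(\cdot),u(\cdot))\in\tl \C_y^{x'}$; the Remark after (\ref{cost}) then gives $\int_0^1L(x(s),u(s))\,ds\ge c(x',y)$, and therefore
\[
\int_0^1L(x(s),u(s))\,ds-f(x(1))\ \ge\ c(x',y)-f(x')\ \ge\ \inf_{z\in M}\bigl(c(z,y)-f(z)\bigr)=f^{c_1}(y).
\]
Thus the infimum in (\ref{dualmin1}) is at least $f^{c_1}(y)$.

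For the reverse inequality and for attainment I would simply evaluate the functional on the given pair $(\tl x(\cdot),\tl u(\cdot))$: by hypothesis $\tl x(0)=y$, $\tl x(1)=x$ and $\int_0^1L(\tl x(s),\tl u(s))\,ds=c(x,y)$, so the value of the functional is $c(x,y)-f(x)=f^{c_1}(y)$. Combined with the previous paragraph this shows both that (\ref{dualmin1}) holds and that $(\tl x(\cdot),\tl u(\cdot))$ is a minimizer of it.

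The second assertion I would deduce from the first via the time-reversal map $(x(\cdot),u(\cdot))\mapsto(x(1-\cdot),u(1-\cdot))$. One checks directly that if $\dot x(t)=-F(x(t),u(t))$ and $x(0)=y$ then the reversed curve $\bar x(t)=x(1-t)$ satisfies $\dot{\bar x}(t)=F(\bar x(t),\bar u(t))$ with $\bar x(1)=y$, so the map is a bijection from $\tl \C_y$ onto $\C^y$; moreover the substitution $s=1-t$ preserves $\int_0^1L\,dt$ while exchanging the value of the path at $t=0$ with its value at $t=1$. Hence the minimization problem (\ref{dualmin2}) is carried onto (\ref{dualmin1}) by this map, its infimum is again $f^{c_1}(y)$, and the minimizer is the image of $(\tl x,\tl u)$, namely $\hat x(t)=\tl x(1-t)$ (with control $\tl u(1-t)$), for which $\hat x(0)=\tl x(1)=x$ and $\int_0^1L(\hat x,\hat u)\,dt-f(\hat x(0))=c(x,y)-f(x)=f^{c_1}(y)$.

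The whole argument is elementary bookkeeping, so I do not expect a genuine obstacle. The only steps that require a little care are invoking the Remark after (\ref{cost}) with the two points in the correct roles, and keeping track of which endpoint $f$ is evaluated at under time reversal; there is no analytic input to supply here, since existence of the point $x$ and of the minimizing pair $(\tl x,\tl u)$ is assumed in the statement.
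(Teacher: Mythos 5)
Your argument is correct: the lower bound via the time-reversed characterization of $c$ from the remark after (\ref{cost}), attainment by direct evaluation on $(\tl x,\tl u)$, and the bijection between $\tl\C_y$ and $\C^y$ under $t\mapsto 1-t$ are exactly the elementary verifications the statement requires. The paper offers no proof at all (it labels the proposition a ``simple observation''), so your write-up simply supplies the omitted bookkeeping along the intended lines.
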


\medskip

Let $\tl u(\cdot)$ be as in the above Proposition and let $\hat u(t)=\tl u(1-t)$. Let $H_t:T^*M\to\Real$ be given by $H_t(p_x)=p_x(F(x,\hat u(t)))-L(x,\hat u(t))$. The following is a consequence of Theorem \ref{PMPB}.

\begin{prop}\label{appPMP}
Let $\tl x$ be a curve that achieves the infimum in (\ref{dualmin1}) and let $\hat x(t)=\tl x(1-t)$. Assume that $\alpha$ is contained in the subdifferential of the function $f$ at the point $\hat x(0)$,
then there exists a Lipschitz curve $\hat p:[0,1]\to T^*M$ in the cotangent bundle such that the followings are true for almost all time $t$ in the interval $[0,1]$:
\begin{equation}
\left\{%
\begin{array}{ll}
    \pi(\hat p(t))=\hat x(t),\\
    \dot {\hat p}(t)=\overrightarrow H_t(\hat p(t)),\\
    \hat p(0)=-\alpha,\\
    H_t(\hat p(t))=\max\limits_{u\in U}\left(\hat p(t)(F(\hat x(t),u))-L(\hat x(t),u)\right)
\end{array}%
\right.
\end{equation}
\end{prop}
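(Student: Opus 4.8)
The plan is to derive Proposition~\ref{appPMP} as a direct application of the Pontryagin Maximum Principle for the Bolza Problem (Theorem~\ref{PMPB}), after first recasting the minimization in (\ref{dualmin1}) into the standard Bolza form. By Proposition~\ref{dualmin}, the curve $\tl x$ achieving the infimum in (\ref{dualmin1}) is an admissible pair in $\tl\C_y$, i.e.\ it solves $\dot x(t)=-F(x(t),u(t))$ with $x(0)=y$, and it minimizes $\int_0^1 L(x(s),u(s))\,ds - f(x(1))$ over $\tl\C_y$. This is precisely a Bolza problem, but for the control system with vector field $-F$ rather than $F$, and with terminal payoff function $f$ (so that ``$-f(x(1))$'' plays the role of the ``$-f(x(1))$'' term in Problem~\ref{Bolza}). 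The first step is therefore to observe that Theorem~\ref{PMPB} applies verbatim to this system: for each $\alpha$ in the subdifferential $d^-f_{\tl x(1)}$, there is a Lipschitz path $\tl p:[0,1]\to T^*M$ with $\pi(\tl p(t))=\tl x(t)$, $\tl p(1)=-\alpha$, $\dot{\tl p}(t)=\overrightarrow{G}_{\tl u(t)}(\tl p(t))$, and $G_{\tl u(t)}(\tl p(t))=\min_{u\in U}G_u(\tl p(t))$, where $G_u(p_x)=p_x(-F(x,u))+L(x,u)=-\bigl(p_x(F(x,u))-L(x,u)\bigr)$ is the Hamiltonian attached to the reversed system.

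The second step is the time-reversal bookkeeping. Set $\hat x(t)=\tl x(1-t)$ and $\hat u(t)=\tl u(1-t)$ as in the statement, and define $\hat p(t)=\tl p(1-t)$. One checks directly that $\pi(\hat p(t))=\hat x(t)$, that $\hat p(0)=\tl p(1)=-\alpha$ with $\alpha\in d^-f_{\tl x(1)}=d^-f_{\hat x(0)}$ exactly matching the hypothesis of the proposition, and that under $t\mapsto 1-t$ the Hamiltonian system $\dot{\tl p}=\overrightarrow{G}_{\tl u(t)}(\tl p)$ transforms into $\dot{\hat p}(t)=-\overrightarrow{G}_{\hat u(t)}(\hat p(t))=\overrightarrow{H}_t(\hat p(t))$, since $H_t(p_x)=p_x(F(x,\hat u(t)))-L(x,\hat u(t))=-G_{\hat u(t)}(p_x)$ and the Hamiltonian vector field is linear in the Hamiltonian, so $\overrightarrow{H}_t = -\overrightarrow{G}_{\hat u(t)}$; the sign flips of $dt$ and of the Hamiltonian cancel. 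Finally, the minimality condition $G_{\tl u(t)}(\tl p(t))=\min_u G_u(\tl p(t))$ becomes, after negating, $H_t(\hat p(t)) = -G_{\hat u(t)}(\hat p(t)) = \max_u\bigl(-G_u(\hat p(t))\bigr)=\max_{u\in U}\bigl(\hat p(t)(F(\hat x(t),u))-L(\hat x(t),u)\bigr)$, which is the last line of the claimed system.

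The remaining point, and the only place any genuine care is needed, is the applicability hypothesis of Theorem~\ref{PMPB}: it requires the terminal function in the Bolza problem to be sub-differentiable at the endpoint of the optimal trajectory. Here the terminal function is $f$ and the endpoint is $\tl x(1)$; but the hypothesis of Proposition~\ref{appPMP} is exactly that $\alpha$ lies in the subdifferential of $f$ at $\hat x(0)=\tl x(1)$, so in particular $d^-f_{\tl x(1)}\neq\emptyset$ and Theorem~\ref{PMPB} applies, yielding a covector for this prescribed $\alpha$. So the main obstacle is not an obstacle at all once one unwinds the definitions; the real content is simply matching conventions --- that minimizing over $\tl\C_y$ a cost with a subtracted terminal payoff is genuinely a Bolza problem, and that the minus sign in (\ref{backcontrol}) is absorbed by replacing $F$ with $-F$ in the Hamiltonian and then undone by the time reversal. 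I would write the proof as: (i) quote Proposition~\ref{dualmin} to put $\tl x$ in Bolza form for $-F$; (ii) apply Theorem~\ref{PMPB} to get $\tl p$; (iii) set $\hat p(t)=\tl p(1-t)$ and verify the four listed identities by the chain rule and the linearity of $H\mapsto\overrightarrow{H}$.
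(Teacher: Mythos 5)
Your proposal is correct and follows essentially the same route as the paper's own proof: apply Theorem~\ref{PMPB} to the reversed-time Bolza problem coming from Proposition~\ref{dualmin} (with Hamiltonian $p_x(-F(x,u))+L(x,u)$, which is your $G_u$ and the paper's $\tl H_u$), then set $\hat p(t)=\tl p(1-t)$ and let the two sign flips cancel to turn the minimum condition into the stated maximum condition. Your additional remarks on verifying the subdifferentiability hypothesis and on the linearity of $H\mapsto\overrightarrow{H}$ are correct and only make explicit what the paper leaves implicit.
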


\begin{proof}
By Theorem \ref{PMPB}, there exists a curve $\tl p:[0,1]\to T^*M$ in the cotangent bundle $T^*M$ such that

\[
\left\{%
\begin{array}{ll}
    \pi(\tl p(t))=\tl x(t),\\
    \tl p(1)=-\alpha,\\
    \dot{\tl p}(t)=\overrightarrow{\tl H}_{\tl u(t)}(\tl p(t)),\\
    \tl H_{\tl u(t)}(\tl p(t))=\min\limits_{u\in U}\left(-\tl p(t)(F(\tl x(t),\tl u(t)))+L(\tl x(t),\tl u(t))\right),
\end{array}%
\right.
\]
where $\tl H_{\tl u}(p)=\min\limits_{u\in U} (-\tl p(F(\tl x,\tl u(t)))+L(\tl x,\tl u(t)))$.

Let $\hat p(t)=\tl p(1-t)$ and $\hat u(t)=\tl u(1-t)$, then the equations above become

\[
\left\{%
\begin{array}{ll}
    \pi(\hat p(t))=\hat x(t),\\
    \hat p(0)=-\alpha,\\
    \dot{\hat p}(t)=\overrightarrow H_{\hat u(t)}(\hat p(t)),\\
    H_{\hat u(t)}(\hat p(t))=
    \max\limits_{u\in U}\left(\hat p(t)(F(\hat x(t),\hat u(t)))-L(\hat x(t),\hat u(t))\right).
\end{array}%
\right.
\]

\end{proof}

\medskip

Assume that the Hamiltonian function $H:T^*M\to\Real$ defined by
\begin{equation}\label{maxHam}
H(p_x)=\max_{u\in U}\left(p_x(F(x,u))-L(x,u)\right)
\end{equation}
is well-defined and $C^2$. Let $\overrightarrow{H}$ be the Hamiltonian vector
field of the function $H$ and let $e^{t\overrightarrow H}$ be its flow. The function $f$ defined in
Theorem \ref{dualexist} is Lipschitz and so it is differentiable
almost everywhere by Rademacher Theorem. Moreover, the map $df:M\to T^*M$ is measurable and locally
bounded. So, if we let $\varphi:M\times[0,1]\to M$ be the map defined by
$\varphi(x,t)=\pi(e^{t\overrightarrow{H}}(-df_x))$, then the map $\varphi$ is a Borel map.

\medskip

\begin{prop}\label{dualunique}
Under the assumptions of Theorem \ref{main}, the following is true for $\mu$-almost all $x$: Given a point $x$ in the support of $\mu$, there exists a unique point $y$ such that

\[
f(x)+f^{c_1}(y)=c(x,y).
\]
Moreover, the points $x$ and $y$ are related by $y=\varphi(x,1)$.
\end{prop}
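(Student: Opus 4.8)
The plan is to combine the three preceding propositions with a differentiation-under-the-$\inf$ argument à la Brenier--McCann. Fix a point $x$ in the support of $\mu$ at which $f$ is differentiable (this excludes only a $\mu$-null set by Rademacher, using absolute continuity of $\mu$). By Theorem \ref{existjoint} any optimal plan $\Pi$ is concentrated on the set $\{(x,y):f(x)+f^{c_1}(y)=c(x,y)\}$, so it suffices to show that for such an $x$ there is \emph{at most one} $y$ with $f(x)+f^{c_1}(y)=c(x,y)$, and that this $y$ must equal $\varphi(x,1)$. Suppose $y$ is such a point. By completeness of the cost (assumption (3)) there is a minimizing admissible pair realizing $c(x,y)$; running it backwards as in the Remark of Section 2, Proposition \ref{dualmin} tells us that the reversed curve $\tl x$ achieves the infimum (\ref{dualmin1}) defining $f^{c_1}(y)$ as a Bolza problem over $\tl\C_y$.

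Next I would apply Proposition \ref{appPMP}: since $f$ is differentiable at $\hat x(0)=x$, its subdifferential there is the single covector $df_x$, so we may take $\alpha = df_x$, and we obtain a Lipschitz lift $\hat p:[0,1]\to T^*M$ with $\hat p(0)=-df_x$, $\dot{\hat p}(t)=\overrightarrow H_{\hat u(t)}(\hat p(t))$, $\pi(\hat p(t))=\hat x(t)$, and the maximality condition $H_{\hat u(t)}(\hat p(t))=H(\hat p(t))$ for a.e. $t$, where $H$ is the maximized Hamiltonian (\ref{maxHam}). The standard argument of the Pontryagin maximum principle then shows that along this trajectory $\overrightarrow H_{\hat u(t)}(\hat p(t))=\overrightarrow H(\hat p(t))$ for a.e. $t$: indeed, wherever $u\mapsto p(F(x,u))-L(x,u)$ attains its max at $\hat u(t)$, the partial derivative in $x$ of $H_{\hat u(t)}$ and of $H$ agree (envelope theorem, using that $H$ is $C^2$ by assumption (4)), and the partial in $p$ is $F(\hat x(t),\hat u(t))=\dot{\hat x}(t)$ which matches $\partial H/\partial p$ at the maximizer. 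Hence $\hat p(\cdot)$ is an integral curve of $\overrightarrow H$ with $\hat p(0)=-df_x$, so by uniqueness of solutions of the ODE (assumption (5) gives completeness of the flow), $\hat p(t)=e^{t\overrightarrow H}(-df_x)$ and therefore $y=\hat x(1)=\pi(e^{\overrightarrow H}(-df_x))=\varphi(x,1)$. This simultaneously proves uniqueness of $y$ and identifies it, giving the proposition; the fact that $\varphi_1$ is then the optimal map follows because $(\mathrm{id}\times\varphi_1)_*\mu$ is concentrated on the set of Theorem \ref{existjoint}, hence is the unique optimal plan and is induced by a map.

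The main obstacle is the passage from the Pontryagin lift with the \emph{measurable} control $\hat u(\cdot)$ to a genuine integral curve of the \emph{smooth} vector field $\overrightarrow H$. One must verify carefully that the maximality condition forces $\overrightarrow H_{\hat u(t)}(\hat p(t))=\overrightarrow H(\hat p(t))$ a.e.\ along the curve — this is where $C^2$-regularity of $H$ and the envelope/first-order optimality in $u$ are essential, and one has to handle the possibility that the maximizer $\hat u(t)$ is not unique (it still gives the same value of $\overrightarrow H$). A secondary technical point is the almost-everywhere-in-$x$ nature of the whole construction: one needs that $df_x$ exists, that $x$ lies in $\mathrm{supp}\,\mu$, and that the minimizer in the definition of $f^{c_1}$ can be taken at such an $x$ — but this is exactly the content of Theorems \ref{dualexist}, \ref{dualregular} together with absolute continuity of $\mu$.
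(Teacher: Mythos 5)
Your proposal is correct and follows essentially the same route as the paper's proof: extract $y$ from the optimal plan via Theorem \ref{existjoint}, pass to the Bolza formulation by Proposition \ref{dualmin}, apply Proposition \ref{appPMP} with $\alpha=df_x$ at points of differentiability of $f$ (which cover $\mu$-almost every $x$ by Rademacher and absolute continuity), and identify $\overrightarrow H_{\hat u(t)}(\hat p(t))$ with $\overrightarrow H(\hat p(t))$ before invoking uniqueness for the ODE. The only cosmetic difference is that you justify the last identification by the envelope/first-order condition in $u$, whereas the paper argues that $H\ge H_t$ everywhere with equality at $\hat p(t)$ forces $dH(\hat p(t))=dH_t(\hat p(t))$ --- these are the same observation.
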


\medskip

\begin{proof}
We first claim that the infimum $f(x)=\inf_{y\in M}[c(x,y)-f^{c_1}(y)]$ is achieved for $\mu$ almost all $x$. Indeed, by assumption, we have $f(x)+f^{c_1}(y)\leq c(x,y)$ for all $(x,y)$ in $M\times M$. Also, let $\Pi$ be the measure defined in Theorem \ref{existjoint}, then $f(x)+f^{c_1}(y)= c(x,y)$ for $\Pi$-almost everywhere. Since the first marginal of the measure $\Pi$ is $\mu$, the following is true for $\mu$ almost all $x$: Given a point  $x$ in the manifold $M$, there exists $y$ in $M$ such that $f(x)+f^{c_1}(y)= c(x,y)$. This proves the claim.

Fix a point $x$ for which the infimum $\inf_{y\in M}[c(x,y)-f^{c_1}(y)]$ is achieved and let $y$ be the point which achieves the infimum. By the proof of the above claim, $x$ achieves the infimum $f^{c_1}(y)=\inf_{x\in M}[c(x,y)-f(x)]$. Therefore, by completeness of the cost $c$ and Proposition \ref{dualmin}, there exists an admissible path $\hat x$ such that $\hat x(0)=x$, $\hat x(1)=y$ and $\hat x$ achieves the infimum (\ref{dualmin2}).

Since $f$ is Lipschitz on a bounded open set $\mathcal U$ containing the support of $\mu$ and $\nu$, it is almost everywhere differentiable on $\mathcal U$ by Rademacher Theorem. Since $\mu$ is absolutely continuous with respect to the Lebesgue measure, $f$ is also differentiable $\mu$-almost everywhere. By Theorem \ref{appPMP}, for $\mu$-almost all $x$, there exists a curve $\hat p:[0,1]\to T^*M$ in the cotangent bundle $T^*M$ such that
\[
\left\{%
\begin{array}{ll}
    \dot {\hat p}(t)=\overrightarrow H_t(\hat p(t)),\\
    \hat p(0)=-df_x,\\
    \pi(\hat p(t))=\hat x(t),\\
    H_t(\hat p(t))=\max\limits_{u\in U}\left(\hat p(t)(F(\hat x(t),u))-L(\hat x(t),u)\right),
\end{array}%
\right.
\]
where $H_t$ is the function on the cotangent bundle $T^*M$ given by $H_t(p_x)=p_xF(x,u(t))-L(x,u(t))$.

By the definition of $H$, we have $H(\hat p(t))=H_t(\hat p(t))$. But, we
also have $H(p)\geq H_t(p)$ for all $p\in T^*M$. Since both $H$ and $H_t$ are $C^2$,
we have $dH(\hat p(t))=dH_t(\hat p(t))$. Hence,
$\overrightarrow H_t(\hat p(t))=\overrightarrow{H}(\hat p(t))$ for almost all $t$. The result
follows from uniqueness of solution to ODE.
\end{proof}

The rest of the arguments for the existence and uniqueness of optimal map
follow from Theorem \ref{existjoint}.

\medskip

\begin{proof}[Proof of Theorem \ref{main}]
As mentioned above, Problem \ref{KanRe} is a relaxation of Problem \ref{optimal}. We can
recover the later from the former by restricting the minimization to joint measures of
the form $(id\times\phi)_*\mu$, where $\phi$ is any Borel map pushing forward $\mu$ to
$\nu$. Therefore, the results follow from Theorem \ref{existjoint} and
Proposition \ref{dualunique}.
\end{proof}

\bigskip

\section{Regularity of Control Costs}

In Theorem \ref{main}, we prove existence and uniqueness of optimal maps under certain regularity conditions on the cost. Most of the conditions in the theorem are easy to verify except condition (2) and (3). In this section, we will give simple conditions which guarantee this regularity. This includes the completeness and the Lipschitz regularity of the cost. First, we recall some basic notions in the geometry of optimal control problems, see \cite{Ag} and reference therein for details.

Fix a point $x_0$ on the manifold $M$ and assume that the control set $U$ is $\Real^k$. Denote by $\C_{x_0}$ the set of all admissible pairs $(x(\cdot),u(\cdot))$ such that the corresponding admissible paths $x(\cdot)$ starts at $x_0$. Moreover, we assume that the control system is of the following form:

\begin{equation}\label{driftcontrol}
\dot x(t) = X_0(x(t)) + \sum_{i=1}^k u_i(t) X_i(x(t)),
\end{equation}
where $u(t)=(u_1(t),...,u_k(t))$ and $X_0,X_1,...,X_k$ are fixed
smooth vector fields on the manifold $M$. The Cauchy problem for
system (\ref{driftcontrol}) is correctly stated for any locally
integrable vector-function $u(\cdot)$.  We assume, throughout
this section, that system (\ref{driftcontrol}) is complete,
i.\,e. all solutions of the system are defined on the whole
semi-axis $[0,+\infty)$. This completeness assumption is
automatically satisfied if one of the following is true: (i) if
$M$ is a compact manifold, (ii) $M$ is a Lie group and the fields
$X_i$ are left-invariant, or (iii) if $M$ is a closed submanifold
of the Euclidean space and $|X_i(x)|\le c(1+|x|),\ i=0,1,\ldots
k$.

Define the endpoint map $End_{x_0}:L^\infty([0,1],\Real^k)\to M$ by
\[
End_{x_0}(u(\cdot))=x(1),
\]
where $(x(\cdot),u(\cdot))$ is the admissible pair corresponding to the control system (\ref{driftcontrol}) with initial condition $x(0)=x_0$. It is known that the map $End_{x_0}$ is a smooth mapping. The critical points of the map $End_x$ are called \emph{singular controls}. Admissible paths corresponding to singular controls are called \emph{singular paths}.

We also need the Hessian of the mapping $End_{x_0}$ at the critical point. (See
\cite{AgSa1} for detail.) Let $E$ be a Banach space which is an
everywhere dense subspace of a Hilbert space $H$. Consider a mapping
$\Phi:E\to\Real^n$ such that the restriction of this map $\Phi\bigr|_W$ to any
finite dimensional subspace $W$ of the Banach space $E$ is $C^2$.
Moreover, we assume that the first and second derivatives of all
the restrictions $\Phi\bigr|_W$ are continuous in the Hilbert space topology on
the bounded subsets of $E$. In other words,
$$
\Phi(v+w)-\Phi(v)=D_v\Phi(w)+\frac 12D^2_v\Phi(w)+o(|w|^2),\ w\in W,
$$
where $D_v\Phi$ is a linear map and $D^2_v\Phi$ is a quadratic mapping from $E$ to
$\mathbb R^n$. Moreover, $\Phi(v)$, $D_v\Phi\bigr|_W$ and
$D^2_v\Phi\bigr|_W$ depend continuously on $v$ in the topology of $H$ while $v$ is contained in a ball of $E$.

The Hessian $\mathrm{Hess}_v\Phi:\ker D_v\Phi\to \mathrm{coker}
D_v\Phi$ of the function $\Phi$ is the restriction of $D^2_v\Phi$
to the kernel of $D_v\Phi$ with values considered up to the image
of $D_v\Phi$. Hessian is a part of $D^2_v\Phi$ which survives
smooth changes of variables in $E$ and $\mathbb R^n$.

Let $p$ be a covector in the dual space $\Real^{n*}$ such that $pD_v\Phi=0$,
then $p\mathrm{Hess}_v\Phi$ is a well-defined real quadratic form
on $\ker D_v\Phi$. We denote the Morse index of this quadratic
form by $\mathrm{ind}(p\mathrm{Hess}_v\Phi)$. Recall that the
Morse index of a quadratic form is the supremum of dimensions of
the subspaces where the form is negative definite.

\begin{defn}
A critical point $v$ of $\Phi$ is called {\it sharp}
if there exists a covector $p\ne 0$ such that $pD_v\Phi=0$ and
$\mathrm{ind}(p\mathrm{Hess}_v\Phi)<+\infty$.
\end{defn}
Needless to say, the spaces $E,H$ and $\mathbb R^n$ can be substituted by smooth
manifolds (Banach, Hilbert and $n$-dimensional) in all this terminology.

Going back to the control system (\ref{driftcontrol}), let
$(x(\cdot),u(\cdot))$ be an admissible pair for this system. We
say that the control $u(\cdot)$ and the path $x(\cdot)$ are sharp
if $u(\cdot)$ is a sharp critical point of the mapping
$End_{x(0)}$.

One necessary condition for controls and paths to be sharp is the, so called, Goh condition.

\begin{prop}\label{Goh}(Goh condition) If
$p(Hess_{u(\cdot)}End_{x(0)})<+\infty$, then
\[
p(t)(X_i(x(t)))=p(t)([X_i,X_j](x(t)))=0 \quad i,j=1,\ldots,k,\ 0\le t\le 1,
\]
where $p(t)=P_{t,1}^*p$ and $P_{t,\tau}$ is the local flow of the control system (20)
with control equal to $u(\cdot)$.
\end{prop}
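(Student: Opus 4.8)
The plan is to derive the Goh condition from the finiteness of the Morse index of the quadratic form $p(\mathrm{Hess}_{u(\cdot)}End_{x(0)})$ by exhibiting, for each index $i$ and each subinterval, a finite-dimensional family of control variations supported near a chosen time on which the second variation is negative definite of arbitrarily large dimension unless the bracket conditions hold. First I would fix notation: write the endpoint map in terms of the chronological exponential / variation-of-constants formula, so that along the reference pair $(x(\cdot),u(\cdot))$ the first variation in the direction of a control perturbation $v(\cdot) = (v_1,\dots,v_k)$ is
\[
D_{u}End_{x_0}(v) = \int_0^1 P_{1,t*}\Big(\sum_i v_i(t) X_i(x(t))\Big)\,dt,
\]
and the pairing with the covector $p$ annihilating the image gives $p(t)(X_i(x(t)))$ appearing as the density against which $v_i$ integrates; since $pD_uEnd = 0$ the kernel condition forces a relation, but the cleaner route is: the function $t\mapsto p(t)(X_i(x(t)))$ is exactly the ``switching function'', and I want to show it vanishes identically.

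The key steps, in order: (1) Compute $p(\mathrm{Hess}_v\Phi)$ explicitly for $\Phi = End_{x_0}$. Using the standard formula for the second variation of a control-affine system, $p(\mathrm{Hess}_v End)(w,w)$ is a sum of a ``diagonal'' term involving $\int p(t)([X_i,X_j](x(t)))$-type kernels and an iterated-integral (Volterra) term $\iint_{s<t} b_{ij}(s,t) w_i(s) w_j(t)$ with $b$ built from $p(t)(X_i(x(t)))$ and $p(t)(X_j(x(t)))$ and their brackets; all of this restricted to the kernel of $D_v End$. (2) Show that if $\psi_i(\tau) := p(\tau)(X_i(x(\tau))) \neq 0$ for some $\tau$ and some $i$, then one can build, for any $N$, an $N$-dimensional subspace of $\ker D_v End$ on which the form is negative definite — the classical construction uses high-frequency oscillatory controls $w_i$ concentrated in a small neighborhood of $\tau$ (e.g. successive ``bump and cancel'' pairs, or Fourier modes on a shrinking window), where the antisymmetric Volterra kernel with nonzero coefficient $\psi_i(\tau)$ produces arbitrarily negative quadratic forms; this is precisely the Legendre-type / strong-index argument that underlies Goh's original necessary condition. (3) Once $p(t)(X_i(x(t))) \equiv 0$ for all $i$, differentiate this identity in $t$ (it is Lipschitz, and $p(t) = P_{t,1}^* p$ satisfies the adjoint equation $\dot p = -(\sum_i u_i \,\mathrm{ad}^* X_i + \dots) p$ — here control-affine with no drift in the relevant part, or carrying the drift along harmlessly since $p(t)(X_i)$ is what we track): one gets $0 = \frac{d}{dt} p(t)(X_i(x(t))) = p(t)([\sum_j u_j X_j, X_i](x(t))) = \sum_j u_j(t)\, p(t)([X_j,X_i](x(t)))$ plus, after a second round of the index argument (now with the already-reduced form), each individual $p(t)([X_i,X_j](x(t)))$ must vanish, again because a nonzero value would let us pump the index up via second-order variations. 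This gives both asserted identities.

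I would assemble this as: reduce to the explicit Hessian formula, invoke the index-estimate lemma (citing \cite{AgSa1, AgSa3} for the detailed second-variation computation and the oscillation argument, since the paper says ``see for instance'' those references), conclude vanishing of $p(t)(X_i(x(t)))$, then bootstrap to the bracket condition. The main obstacle — and the step I expect to be technically heaviest — is step (2)/the bootstrap in step (3): making precise the claim that a nonzero switching function forces infinite Morse index. One must carefully stay inside $\ker D_v End$ (the variations must be endpoint-preserving to first order), control the error terms $o(|w|^2)$ uniformly as the supporting window shrinks, and handle the case where several $\psi_i$ vanish simultaneously but a bracket term does not, which requires iterated variations of ``Goh type'' (controls whose primitives, not the controls themselves, are the small parameters). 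The cleanest exposition localizes everything to an arbitrarily small interval around a Lebesgue point of the relevant kernel, uses the nilpotent/free approximation so the iterated integrals become explicit polynomial forms, and reads off the index from the signature of an antisymmetric matrix, which is automatically even and can be made $\geq 2N$. Once that lemma is in hand the rest is routine ODE manipulation.
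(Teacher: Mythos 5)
The paper gives no proof of this proposition --- it is quoted directly from \cite{AgSa1,AgSa3} --- and your outline is a faithful sketch of the argument in those references: finiteness of $\mathrm{ind}(p\,\mathrm{Hess}_{u(\cdot)}End_{x(0)})$ is contradicted by localized high-frequency Goh-type variations (where the primitives of the controls are the small parameters) inside $\ker D_{u(\cdot)}End_{x(0)}$ unless the antisymmetric kernel $p(t)([X_i,X_j](x(t)))$ of the second variation vanishes, while the identity $p(t)(X_i(x(t)))=0$ already follows from the first-order condition $pD_{u(\cdot)}End_{x(0)}=0$ that is implicit in the hypothesis. Your plan is correct and takes the same route as the cited proof; the one streamlining worth noting is that the first identity needs no index argument at all, being forced by annihilation of the image of the differential, so the Morse-index machinery is only genuinely needed for the bracket condition.
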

See \cite{AgSa1,AgSa3} and references therein for the proof and other effective necessary
and sufficient conditions of the sharpness.

Now consider the optimal control problem
\begin{equation}\label{costagain}
c(x,y)=\inf_{(x(\cdot),u(\cdot))\in\C_x^y}\int_0^1L(x(t),u(t))\,dt,
\end{equation}
where the infimum ranges over all admissible pairs $(x(t),u(t))$ corresponding to the control system (\ref{driftcontrol}) with initial condition $x(0)=x$ and final condition $x(1)=y$.

Let $H:T^*M\to\Real$ be the Hamiltonian function defined in (\ref{maxHam}). For
simplicity, we assume that the Hamiltonian is $C^2$. A minimizer $x(\cdot)$ of the above
problem is called normal if there exists a curve $p:[0,1]\to T^*M$ in the cotangent
bundle $T^*M$ such that $\pi(p(t))=x(t)$ and $p(\cdot)$ is a trajectory of the
Hamiltonian vector field $\overrightarrow{H}$. Singular minimizers are also called abnormal.
According to this not perfect terminology a minimizer can simultaneously be normal and abnormal.
A minimizer which is not normal is called strictly abnormal.

The following theorem gives simple sufficient conditions for completeness of the cost function defined in (\ref{costagain}). It is a combination of the well-known existence result (see \cite{SaTo}) and necessary optimality conditions (see \cite{AgSa1}).

\medskip

\begin{thm}(Completeness of costs)\label{mintraj}
Let $L$ be a Lagrangian function which satisfies the following:
\begin{enumerate}
\item $L$ is bounded below and
 there exist constants $K>0$ such that the ratio $\frac{|u|}{L(x,u)+K}$ tends to 0 as $|u|\to\infty$ uniformly on compact subsets of $M$;
\item for any compact $C\subset M$
there exist constants $a, b>0$ such that $|\frac{\partial L}{\partial x}(x,u)| \leq a (L(x,u)+|u|)+b$,
$\forall x\in C,\ u\mathbb R^k$;
\item the function $u\mapsto L(x,u)$ is a strongly convex function for all $x\in M$.
\end{enumerate}
Then, for each pair of points $(x,y)$ in the manifold $M$ which satisfy $c(x,y)<+\infty$, there exists an admissible pair $(x(\cdot),u(\cdot))$ achieving the infimum in (\ref{costagain}). Moreover, the minimizer $x(\cdot)$ is either a normal or a sharp path.
\end{thm}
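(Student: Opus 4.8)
The plan is to prove the two assertions separately: existence of a minimizer by the direct method of the calculus of variations (this is the content of the cited result \cite{SaTo}), and the \emph{normal-or-sharp} alternative by combining the Pontryagin Maximum Principle with the second-order necessary optimality conditions of \cite{AgSa1}.

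For existence, I would fix $x,y$ with $c(x,y)<+\infty$, take a minimizing sequence of admissible pairs $(x_n(\cdot),u_n(\cdot))$ joining $x$ to $y$, and argue as follows. Since $L$ is bounded below, the energies $\int_0^1 L(x_n,u_n)\,dt$ stay bounded, so by hypothesis~(1) and the de la Vall\'ee Poussin criterion the controls $u_n(\cdot)$ are equi-integrable; after passing to a subsequence, $u_n\rightharpoonup u$ weakly in $L^1([0,1],\Real^k)$. Because the dynamics (\ref{driftcontrol}) is affine in the control and $X_0,\dots ,X_k$ are smooth, the $L^1$-bound on $u_n$ together with Gronwall's inequality confines the trajectories $x_n(\cdot)$ to a fixed compact set with equi-integrable derivatives; Arzel\`a--Ascoli then yields uniform convergence $x_n\to x$ along a further subsequence, and passing to the limit in (\ref{driftcontrol}) --- using linearity in $u$ --- shows that $(x(\cdot),u(\cdot))$ is admissible with $x(0)=x$, $x(1)=y$. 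Hypothesis~(3), convexity of $u\mapsto L(x,u)$ together with joint continuity, gives weak lower semicontinuity of $w\mapsto\int_0^1 L(x(t),w(t))\,dt$ along these convergences (a standard Tonelli--Ioffe lower-semicontinuity theorem), so $(x(\cdot),u(\cdot))$ is a minimizer. Finally I would invoke the Lipschitzian regularity theorem of \cite{SaTo}, whose hypotheses are exactly (1)--(3) and the bound (2) on $\partial L/\partial x$, to conclude that the minimizing control can be taken essentially bounded; this is what makes the endpoint-map constructions introduced earlier in this section applicable to $u(\cdot)$.

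For the dichotomy, set $J(w)=\int_0^1 L\bigl(x^w(t),w(t)\bigr)\,dt$, where $x^w(\cdot)$ is the trajectory of (\ref{driftcontrol}) with control $w$ and $x^w(0)=x$, and consider the \emph{extended endpoint map} $\widehat{End}_x:=(End_x,J)\colon L^\infty([0,1],\Real^k)\to M\times\Real$, which inherits the smoothness properties of $End_x$. Since $J(w)\ge c(x,End_x(w))$ for every admissible $w$, with equality at $w=u$, the value $\widehat{End}_x(u)=(y,c(x,y))$ lies on the boundary of the image of $\widehat{End}_x$ on the side of larger cost: if $D_u\widehat{End}_x$ were surjective onto $T_y^*M$'s base $T_yM\times\Real$ the image would contain a full neighbourhood of $(y,c(x,y))$, hence a point $(y,r)$ with $r<c(x,y)$, contradicting the definition of $c$. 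Thus $u$ is a critical point of $\widehat{End}_x$, and the first- and second-order necessary conditions for a minimum --- the Pontryagin Maximum Principle and the finite-index estimate of \cite{AgSa1} --- provide a nonzero covector $(\lambda_0,p)\in\Real\times T_y^*M$ with $\lambda_0\ge 0$, $(\lambda_0,p)\,D_u\widehat{End}_x=0$ and $\mathrm{ind}\bigl((\lambda_0,p)\,\mathrm{Hess}_u\widehat{End}_x\bigr)<+\infty$. Transporting $p$ backwards along $x(\cdot)$ by the cotangent lift of the flow of (\ref{driftcontrol}) gives a Lipschitz lift $p(\cdot)$ with $\pi(p(t))=x(t)$, $\dot p(t)=\overrightarrow{H^{\lambda_0}_{u(t)}}(p(t))$ and $H^{\lambda_0}_{u(t)}(p(t))=\max_{v}\bigl(p(t)(F(x(t),v))-\lambda_0 L(x(t),v)\bigr)$ for a.e.\ $t$, where $H^{\lambda_0}_u(q)=q(F(x,u))-\lambda_0 L(x,u)$. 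If $\lambda_0>0$ I normalize $\lambda_0=1$: then, exactly as in the proof of Proposition~\ref{dualunique}, the maximized Hamiltonian $H$ of (\ref{maxHam}) agrees with $H^1_{u(t)}$ along $p(t)$, both are $C^2$, and $H\ge H^1_{u(t)}$ everywhere, so $dH(p(t))=dH^1_{u(t)}(p(t))$ and $p(\cdot)$ is a trajectory of $\overrightarrow{H}$ by uniqueness of solutions of ODE --- hence $x(\cdot)$ is normal. If $\lambda_0=0$, then $p\ne 0$ and $p\,D_u End_x=0$, so $u(\cdot)$ is a singular control, i.e.\ $x(\cdot)$ is abnormal; moreover $\ker D_u\widehat{End}_x=\ker D_u End_x\cap\ker D_u J$ has codimension at most one inside $\ker D_u End_x$ and $(0,p)\,\mathrm{Hess}_u\widehat{End}_x$ equals $p\,\mathrm{Hess}_u End_x$ restricted to that subspace; since a quadratic form of finite Morse index on a subspace of codimension one still has finite Morse index on the ambient space, $\mathrm{ind}\bigl(p\,\mathrm{Hess}_u End_x\bigr)<+\infty$, i.e.\ $x(\cdot)$ is sharp. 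Therefore a minimizer that is not normal is sharp, which is the claim.

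The step I expect to be the main obstacle is the second-order necessary condition: producing a Lagrange multiplier for which the Hessian of $\widehat{End}_x$ has \emph{finite} Morse index. This is the core of the index theory of \cite{AgSa1} and ultimately rests on a topological argument --- if the index were infinite, $\widehat{End}_x$ would be open near $u$, contradicting minimality. A secondary technical point is the passage from $L^1$ to $L^\infty$ minimizers needed before ``critical point of $End_x$'' and its Hessian even make sense, which is precisely the role of the Lipschitzian regularity of \cite{SaTo}; and, since $M$ is only a manifold, one carries out the whole endpoint-map discussion in a relatively compact coordinate chart containing the minimizing trajectory.
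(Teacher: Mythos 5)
Your proposal is correct and follows exactly the route the paper intends: the paper gives no proof beyond declaring the theorem ``a combination of the well-known existence result (see \cite{SaTo}) and necessary optimality conditions (see \cite{AgSa1})'', and your argument is precisely that combination --- the direct method plus the Lipschitzian regularity of \cite{SaTo} to obtain an $L^\infty$ minimizer, and the Lagrange multiplier with finite Morse index for the extended endpoint map (the index/openness dichotomy underlying Lemma \ref{2solid}) to yield the normal-or-sharp alternative. The codimension-one restriction argument in the $\lambda_0=0$ case is the right way to pass from finite index of $(0,p)\,\mathrm{Hess}_u\widehat{End}_x$ to sharpness of the control in the sense of the paper's definition.
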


\medskip

\begin{rem}
Theorem \ref{mintraj} gives lots of examples that satisfy condition (3) in Theorem \ref{main}. In particular, this applies to the case where the control set $U=\Real^k$ and the Lagrangian is $L(x,u)=\sum_{i=1}^k u_i^2$.
\end{rem}

Next, we proceed to the main result of of this section which concerns with the Lipschitz regularity of the cost function. This takes care of the condition (2) in Theorem \ref{main}.

\medskip

\begin{thm}\label{lip}(Lipschitz regularity)
Assume that the system (\ref{simplecontrol}) does not admit sharp
controls and the Lagrangian $L$ satisfies conditions of Theorem \ref{mintraj}, then the set $D=\{(x,End_x(u(\cdot)))|x\in M,u\in
L^\infty([0,1],\Real^k)\}$ is open in the product $M\times M$.
Moreover, the function $(x,y)\mapsto c(x,y)$ is locally
Lipschitz on the set $D$, where the cost $c$ is given by (\ref{cost}).
\end{thm}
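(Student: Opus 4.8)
The plan is to prove the two assertions separately, but both rest on the same mechanism: near a point $(x_0, y_0) \in D$ with $y_0 = End_{x_0}(u_0(\cdot))$, the absence of sharp controls forces the endpoint map to be a submersion at every optimal control, so one can smoothly and boundedly perturb both endpoints while keeping a nearby admissible path, and then compare costs along these perturbed paths. First I would fix $(x_0,y_0) \in D$. By Theorem \ref{mintraj} (whose hypotheses we are assuming), there is an admissible pair $(x_*(\cdot),u_*(\cdot))$ realizing $c(x_0,y_0) < +\infty$, and since the system admits no sharp controls, $u_*(\cdot)$ is not a sharp critical point of $End_{x_0}$; but Theorem \ref{mintraj} also tells us the minimizer is normal or sharp, hence here it must be \emph{normal}. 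I would then argue that $u_*(\cdot)$ is in fact a \emph{regular} point of the endpoint map: if it were a singular (critical) control, the Goh condition (Proposition \ref{Goh}) together with the hypothesis ``no sharp controls'' would produce a contradiction — indeed sharpness is exactly finiteness of the Morse index of $p\,\mathrm{Hess}\,End$, and one has to rule out the infinite-index case, which is where I expect to lean on the normal-or-sharp dichotomy to exclude strictly abnormal critical controls, so that $D_{u_*}End_{x_0}$ is surjective.

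Granting surjectivity of $D_{u_*}End_{x_0}$, the first assertion (openness of $D$) follows from a quantitative implicit/open-mapping argument: the map $(x,u(\cdot)) \mapsto (x, End_x(u(\cdot)))$ from $M \times L^\infty([0,1],\Real^k)$ to $M \times M$ is smooth, and its differential at $(x_0,u_*(\cdot))$ is onto because the second block $D_{u_*}End_{x_0}$ is onto (and one also gets the $x$-direction by moving the initial point, using completeness of the flows). Hence its image contains a neighborhood of $(x_0,y_0)$, so $D$ is open. The key technical point I would be careful about is that $L^\infty$ is not reflexive and $End_x$ is only ``$C^2$ on finite-dimensional subspaces with Hilbert-continuous derivatives'' in the sense set up before Proposition \ref{Goh}; so rather than a naive Banach implicit function theorem I would exhibit an explicit finite-dimensional family of control variations $u_*(\cdot) + \sum_j s_j v_j(\cdot)$ with $v_j \in L^\infty$ chosen so that the $n$ vectors $D_{u_*}End_{x_0}(v_j)$ span $T_{y_0}M$, and apply the ordinary finite-dimensional implicit function theorem to the resulting smooth map $(x, s) \mapsto End_x(u_*(\cdot) + \sum_j s_j v_j(\cdot))$.

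For the Lipschitz regularity, I would use this same finite-dimensional surjective family as a chart: for $(x,y)$ near $(x_0,y_0)$ choose $s = s(x,y)$ depending smoothly (hence Lipschitz) on $(x,y)$ with $End_x(u_*(\cdot)+\sum_j s_j(x,y) v_j(\cdot)) = y$. This produces, for each nearby $(x,y)$, an explicit admissible pair joining $x$ to $y$ whose control $u_{x,y}(\cdot) := u_*(\cdot) + \sum_j s_j(x,y) v_j(\cdot)$ is uniformly bounded in $L^\infty$ and varies Lipschitz-continuously with $(x,y)$ in the $L^\infty$ norm. Then
\[
c(x,y) \le \int_0^1 L\bigl(x_{x,y}(t), u_{x,y}(t)\bigr)\, dt =: \Lambda(x,y),
\]
and $\Lambda$ is Lipschitz near $(x_0,y_0)$ because the trajectory $x_{x,y}(\cdot)$ depends Lipschitz-continuously (uniformly in $t$) on $(x, s)$ by Gronwall, $L$ is $C^1$ with the growth bound (2) of Theorem \ref{mintraj}, and everything ranges over a compact set. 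Applying the same construction \emph{centered at a variable base point} $(x_1,y_1)$ near $(x_0,y_0)$ gives the matching inequality $c(x_1,y_1) \le \Lambda'(x,y)$ with $\Lambda'(x_1,y_1) = c(x_1,y_1)$, and subtracting yields $|c(x,y) - c(x_1,y_1)| \le C\,(d(x,x_1) + d(y,y_1))$ with a local constant $C$; a covering argument over the compact closure of a neighborhood of $(x_0,y_0)$ promotes this to local Lipschitz continuity on $D$.

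The main obstacle, I expect, is the very first step: converting the geometric hypothesis ``no sharp controls'' plus the normal-or-sharp conclusion of Theorem \ref{mintraj} into the \emph{analytic} statement that the optimal control is a regular point of $End_{x_0}$ with $D_{u_*}End_{x_0}$ \emph{surjective} (not merely of full finite-dimensional image) — i.e. handling the infinite-dimensional/infinite-index abnormal possibilities carefully, and making the finite-dimensional reduction rigorous given that $End$ is only controlled on finite-dimensional subspaces. Once surjectivity is in hand, openness and the two-sided Lipschitz estimate are routine ODE-perturbation and Gronwall arguments.
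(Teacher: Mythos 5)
There is a genuine gap, and it sits exactly at the point you yourself flag as the main obstacle: the hypothesis ``no sharp controls'' does \emph{not} imply that an optimal control $u_*(\cdot)$ is a regular point of $End_{x_0}$. By the paper's definition, a critical point is sharp if \emph{some} nonzero annihilating covector $p$ gives $\mathrm{ind}(p\,\mathrm{Hess}_{u_*}End_{x_0})<+\infty$; the absence of sharp controls therefore only says that every singular control has infinite Morse index for every such $p$ --- it does not exclude singular controls. The normal-or-sharp dichotomy of Theorem \ref{mintraj} does not rescue this either: the paper explicitly warns that a minimizer can be simultaneously normal and abnormal (singular), so ``normal'' does not yield surjectivity of $D_{u_*}End_{x_0}$. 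Indeed, the remark following the theorem states that it applies to cases where the endpoint map is \emph{not} a submersion (e.g.\ $2$-generating subriemannian structures, where singular controls exist but are never sharp because Goh fails). Your reduction to the submersion case therefore proves a strictly weaker statement and misses the whole point of the theorem.

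The mechanism the paper uses instead is quantitative openness at non-sharp critical points: if $\mathrm{Ind}_v\Phi\ge 0$ (which follows from infinite index), then $\Phi$ is $2$-solid at $v$, i.e.\ $\Phi(\mathbf B_v(\varepsilon))\supset B_{\Phi(v)}(c\varepsilon^2)$ (Lemma \ref{2solid}). This already gives openness of $D$ without any regularity of $End$. For the Lipschitz bound, note that a naive use of $2$-solidity would only give H\"older-$1/2$ continuity of $c$; the paper's key trick (Lemma \ref{Lipschitz}) is to apply $2$-solidity to $\Phi$ restricted to a finite-dimensional subspace of $\ker D_v\varphi$, so that moving the endpoint by $c\varepsilon^2$ costs only $\varphi(w)-\varphi(v)\le c'|w-v|^2\le c'\varepsilon^2$, whence $|\mu(y)-\mu(x)|\le (c'/c)|y-x|$. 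Your proposal has no substitute for this second-order matching. A secondary issue: even in your framework, getting constants uniform over a compact set of endpoint pairs requires compactness of the set of minimizing controls in the strong $L^2$ topology, which the paper establishes via a weak-convergence/strict-convexity argument; your covering argument assumes this uniformity without proof.
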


\medskip

\begin{rem}
In the case where the endpoint map is a submersion, there is no singular control. Therefore, Theorem \ref{lip} is applicable. In particular, this theorem, together with Theorem \ref{main} and \ref{mintraj}, can be used to treat the cases considered in \cite{Br,Mc,BeBu}. In section 5, we will consider a class of examples where the endpoint map is not necessarily a submersion, but Theorem \ref{lip} is still applicable.
\end{rem}

The rest of the section is devoted to the proof of Theorem \ref{lip}.

\begin{defn}

Given $v$ in the Banach space $E$, we write $\mathrm{Ind}_v\Phi\ge m$ if
$$
\mathrm{ind}(p\mathrm{Hess}_v\Phi)-\mathrm{codim\,im}D_v\Phi\ge m
$$
for any $p$ in $\mathbb R^{n*}\setminus\{0\}$ such that $pD_v\Phi=0$.
\end{defn}

It is easy to see that $\{v\in E : \mathrm{Ind}_v\Phi\ge m\}$ is an open subset of $E$ for any integer $m$. Let $\mathbf B_v(\varepsilon),\,B_x(\varepsilon)$ be radius $\varepsilon$ balls in $E$ and $\mathbb R^n$ centered at $v$ and $x$ respectively. The following is a qualitative version of openness of a mapping $\Phi$ and any mapping $C^0$ close to it.

\begin{defn} We say that the map $\Phi:E\to\Real^n$ is $r$-solid at the point $v$ of the Banach space $E$ if
for some constant $c>0$ and any sufficiently small $\varepsilon>0$, there exists $\delta>0$ such that $\tilde\Phi(\mathbf
B_v(\varepsilon))\supset B_{\tilde\Phi(v)}(c\varepsilon^r)$ for
any $\tilde\Phi:\mathbf B_v(\varepsilon)\to\Real^n$ such that $\sup\limits_{w\in\mathbf
B_v(\varepsilon)}|\tilde\Phi(w)-\Phi(w)|\le\delta$.
\end{defn}

Implicit function theorem together with Brouwer fixed point theorem imply that $\Phi$ is 1-solid at any regular point.

\medskip

\begin{lem}\label{2solid}
If $\mathrm{Ind}_v\Phi\ge 0$ then $\Phi$ is 2-solid at $v$.
\end{lem}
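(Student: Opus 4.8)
The plan is to reduce $2$-solidity at $v$ to a degree-theoretic property of the quadratic part of $\Phi$, in three steps: a finite-dimensional reduction, a partial normal form, and a linking argument. Assume $v=0$, $\Phi(v)=0$, write $L=D_v\Phi$, let $V=\mathrm{im}\,L$, $d=\mathrm{codim}\,V$, and fix a splitting $\Real^n=V\oplus Z$ with $\dim Z=d$; identifying $\mathrm{coker}\,L$ and the annihilator of $V$ with $Z$ and $Z^*$, the hypothesis reads $\mathrm{ind}\big((p\,\mathrm{Hess}_v\Phi)|_{\ker L}\big)\ge d$ for all $p\in Z^*\setminus\{0\}$, where $p\,\mathrm{Hess}_v\Phi$ is the form $w\mapsto p(D^2_v\Phi(w))$. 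Pick a finite-dimensional $E_1\subset E$ with $L|_{E_1}$ an isomorphism onto $V$, and—using compactness of the unit sphere of $Z^*$, continuity of $p\mapsto p\,\mathrm{Hess}_v\Phi$, density of $E$ in $H$, and openness of negative-definiteness on a fixed subspace—pick a finite-dimensional $N\subset(\ker L)\cap E$ with $\mathrm{ind}\big((p\,\mathrm{Hess}_v\Phi)|_N\big)\ge d$ still for all $p\ne0$. All the hypotheses on $\Phi$ (the $o(|w|^2)$ remainder, continuity of the two derivatives) are finite-dimensional statements valid on $E':=E_1\oplus N$, so from now on we work with $\Phi|_{E'}$.

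Applying the implicit function theorem to $R\circ\Phi|_{E'}$, where $R:\Real^n\to V$ is the projection along $Z$ (its differential at $0$ is invertible on $E_1$ and vanishes on $N$), we obtain a $C^2$ coordinate change of the source near $0$ carrying $\Phi|_{E'}$ to $(a,w_0)\mapsto\big(a,\Psi(a,w_0)\big)$, with $a\in V$, $w_0\in N$ small, and $\Psi(a,w_0)=\tfrac12\mathcal Q(w_0)+o(|w_0|^2)$ uniformly for $|a|=O(|w_0|^2)$, where $\mathcal Q:=(I-R)\big(D^2_v\Phi\big)|_N:N\to Z$. Since $\langle p,\mathcal Q(w)\rangle=(p\,\mathrm{Hess}_v\Phi)(w)$ for $p\in Z^*$ and $w\in N$, we have $\mathrm{ind}(\langle p,\mathcal Q\rangle)\ge d=\dim Z$ for every $p\in Z^*\setminus\{0\}$.

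I then claim $2$-solidity of $\Phi$ at $v$ follows from the following statement about the quadratic map $\mathcal Q$: \emph{there is a continuous $h:\bar B^d\to\bar B(N)$ (closed unit $d$-ball into closed unit ball of $N$) with $\mathcal Q(h(\zeta))\ne0$ on $\partial\bar B^d$ and $\deg(\mathcal Q\circ h,\mathrm{int}\,\bar B^d,0)\ne0$.} Indeed, with $\chi$ the source chart and a small fixed $\lambda>0$, set $\Xi_\varepsilon(a,\zeta):=\chi\big(\lambda\varepsilon^2 a,\ \lambda\varepsilon\,h(\zeta)\big)$ for $(a,\zeta)\in\bar B^{n-d}\times\bar B^d$; then $\Xi_\varepsilon$ maps into $\mathbf B_v(\varepsilon)$ and $\Phi(\Xi_\varepsilon(a,\zeta))=\big(\lambda\varepsilon^2 a,\ \tfrac12\lambda^2\varepsilon^2\mathcal Q(h(\zeta))+o(\varepsilon^2)\big)$. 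On the boundary this has norm $\gtrsim\varepsilon^2$ (the remainder being negligible for small $\varepsilon$), and after a homotopy killing the remainder it is the product map $a\mapsto a$ times $\zeta\mapsto\tfrac12\lambda^2\varepsilon^2\mathcal Q(h(\zeta))$, whose Brouwer degree at $0$ is $1\cdot\deg(\mathcal Q\circ h,\cdot,0)\ne0$; stability of the degree under $C^0$-perturbations of size $\delta<c\varepsilon^2$ then forces $\tilde\Phi(\mathbf B_v(\varepsilon))\supset B_{\tilde\Phi(v)}(c\varepsilon^2)$ for a fixed $c>0$, which is $2$-solidity. (When $d=0$, $\Phi$ is already $1$-solid at $v$ by the implicit function and Brouwer theorems, hence $2$-solid, and the claim is vacuous.)

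The remaining, and \emph{main}, task is the displayed claim about $\mathcal Q$. Note first that $\mathrm{ind}(\langle p,\mathcal Q\rangle)\ge1$ for all $p\ne0$ already forces $0\in\mathrm{int}\,\mathrm{conv}\big(\mathcal Q(S^{N-1})\big)$ (a separating covector would make $\langle p,\mathcal Q\rangle$ positive semidefinite), so the map sending $p$ to the point of $\mathrm{conv}\,\mathcal Q(S^{N-1})$ minimising $z\mapsto\langle p,z\rangle$, radially projected, is a degree $\pm1$ self-map of $S^{d-1}$; the force of the full hypothesis $\mathrm{ind}\ge d$ is that a nonzero-degree map can be realised by genuine values $\mathcal Q(w)$, $|w|=1$, not merely by convex combinations of them. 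I would prove the claim by induction on $d$. For $d=1$, sign-indefiniteness of $\langle\pm1,\mathcal Q\rangle$ gives unit $w_\pm$ with $\pm\mathcal Q(w_\pm)>0$, and $h(t):=|t|\,w_{\operatorname{sgn}t}$ works ($\mathcal Q\circ h$ changes sign exactly once, with nonzero boundary values). For the inductive step, fix $p_0\in Z^*\setminus\{0\}$, put $Z'=\ker p_0$ and $\mathcal Q'=\pi_{Z'}\circ\mathcal Q$; since $\langle p',\mathcal Q'\rangle=\langle\pi_{Z'}^{*}p',\mathcal Q\rangle$ with $\pi_{Z'}^{*}p'\ne0$, $\mathcal Q'$ satisfies the $(d-1)$-index condition, so by induction there is a $(d-1)$-disk $h'$ with the required properties; one then suspends $h'$ to a $d$-disk $h(\zeta',t)=\lambda h'(\zeta')+\mu(t)w_{\operatorname{sgn}t}$ using two unit vectors $w_\pm$ drawn from the $\ge2$-dimensional negative (resp.\ positive) cone of $\langle p_0,\mathcal Q\rangle$, so that the $p_0$-component of $\mathcal Q\circ h$ is $\gtrless0$ near $t=\pm1$. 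The obstacle is exactly this suspension: the bilinear cross term $2\mu(t)\,B(h'(\zeta'),w_\pm)$ in $\mathcal Q(h(\zeta',t))$ is only linear in the new amplitude $\mu$, while the covering term $\mathcal Q'(h'(\zeta'))$ in the $Z'$-directions is quadratically small on $\partial\bar B^{d-1}$ (and vanishes at the interior zeros of $\mathcal Q'\circ h'$ that the nonzero degree of $h'$ produces); so the amplitudes must be arranged in a hierarchy—$\mu(t)$ small compared with $\lambda$ times (the covering margin of $h'$)/(a bound on $B$)—and one must still check that $\mathcal Q\circ h$ stays away from $0$ on the \emph{entire} boundary of the $d$-disk and that the degree multiplies as $\deg(\mathcal Q'\circ h')\cdot(\pm1)$; this last verification on the top and bottom faces, where the $Z'$-covering is unavailable, is the delicate point, and is most safely handled by appealing to the analysis of Morse indices of quadratic maps and the attendant openness estimates in \cite{AgSa1}.
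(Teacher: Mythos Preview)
Your overall architecture is right and matches the paper's in spirit: reduce to finite dimensions, use the anisotropic scaling $\varepsilon^2$ on a complement of $\ker D_v\Phi$ versus $\varepsilon$ on $\ker D_v\Phi$, and push the question down to a topological/openness statement about the second-order model. Where you diverge is in packaging and in how much you try to prove from scratch.

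The paper is considerably more economical on both counts. It skips your implicit-function normal form entirely and works directly with the mixed model map
\[
Q(v)=D_0\Phi\,v_1+\tfrac12 D_0^2\Phi(v_2),\qquad v=v_1+v_2\in E_1\oplus\ker D_0\Phi,
\]
then observes that $\Phi_\varepsilon(v):=\varepsilon^{-2}\Phi(\varepsilon^2 v_1+\varepsilon v_2)=Q(v)+o(1)$, so $2$-solidity of $\Phi$ reduces to $C^0$-stable surjectivity of $Q$ near $0$. For that, the paper simply invokes the proof of Theorem~20.3 in \cite{AgSa1}: under $\mathrm{Ind}_0\Phi\ge 0$, the set $Q^{-1}(0)$ contains \emph{regular} points of $Q$ in every neighborhood of $0$, and a regular point plus Brouwer gives the needed solid covering. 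Your degree claim about the pure quadratic map $\mathcal Q:N\to Z$ is essentially a reformulation of this same fact, but you try to rebuild it by an induction on $d=\dim Z$ whose suspension step you yourself flag as delicate and then hand back to \cite{AgSa1}. That induction is not needed: once you accept the key input from \cite{AgSa1} (existence of regular points of $Q$ in $Q^{-1}(0)$), the whole argument is three lines, and your IFT chart $\chi$ and the disk $h$ become superfluous. In short, your proof is correct modulo the same black box the paper uses, but the paper reaches that black box in one step rather than through a normal form and an incomplete induction.
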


\medskip

\begin{proof}
This lemma is a refinement of Theorem 20.3 from
\cite{AgSa1}. It can be proved by a slight modification of the proof
of the cited theorem. Obviously, we may assume that $v$ is a
critical point of $\Phi$. Moreover, by an argument in the proof of the above cited theorem, we may assume that $E$ is a finite dimensional space, $v=0$ and $\Phi(0)=0$.

Let $E=E_1\oplus E_2$, where $E_2=\ker D_0\Phi$. For any $w\in E$ we write $w=w_1+w_2$, where $w_1\in E_1,\ w_2\in E_2$. Now consider the mapping
$$
Q:v\mapsto D_0\Phi v_1+\frac 12D^2_0\Phi(v_2),\ v\in E.
$$
It is shown in the proof of Theorem 20.3 from \cite{AgSa1} that
$Q^{-1}(0)$ contains regular points in any neighborhood of 0.
Hence $\exists\, c>0$ such that the image of any continuous mapping
$\tilde Q:\mathbf B_0(1)\to\mathbb R^n$ sufficiently close (in
$C^0$-norm) to $Q\bigr|_{\mathbf B_0(1)}$ contains $B_0(c)$. Now
we set $\Phi_\varepsilon(v)=\frac
1{\varepsilon^2}\Phi(\varepsilon^2v_1+\varepsilon v_2)$; then
$\Phi_\varepsilon(v)=Q(v)+o(1)$ as $\varepsilon\to 0$
and the desired property of $\Phi$
is reduced to the already established property of $Q$.
\end{proof}

\medskip

The minimization problem (\ref{costagain}) can be rephrased into a constrained minimization problem in an infinite-dimensional space. For simplicity, consider the case where $M=\Real^n$. Let $(x(\cdot),u(\cdot))$ be an admissible pair of the control system (\ref{driftcontrol}) and let $\varphi:\Real^n\times L^\infty([0,1],\Real^k)\to\Real$ be the function defined by
\[
\varphi(x,u(\cdot))=\int_0^1 L(x(t),u(t))dt.
\]
Let $\Phi:\Real^n\times L^\infty([0,1],\Real^k)\to \Real^n\times \Real^n$ be the map
\[
\Phi(x,u(\cdot))=(x,End_x(u(\cdot))).
\]
Finding the minimum in (\ref{costagain}) is now equivalent to minimizing the function $\varphi$ on the set $\Phi^{-1}(x,y)$.

Due to the above discussion, we consider the following general setting. Consider a function $\varphi:E\to\mathbb R$ on the Banach space $E$ such that $\varphi|_W$ is a $C^2$-mapping for any finite dimensional subspace
$W$ of $E$.  Assume that the function $\varphi$ as well as the first
and second derivatives of the restrictions $\varphi|_W$ are continuous on the bounded subsets of $E$ in the
topology of $H$. Assume that $K$ is a bounded subset of $E$ that is compact in the topology of $H$ and
satisfies the following property:
\[
\varphi(v)=\min\{\varphi(w)|w\in E,\ \Phi(w)=\Phi(v)\}
\]
for any $v$ in the set $K$.

We define a function $\mu$ on $\Phi(K)$ by the formula $\mu(\Phi(v))=\varphi(v),\ v\in K$.

\medskip

\begin{lem}\label{Lipschitz}
If $\mathrm{Ind}_v\Phi\ge 2$ for any $v\in K$, then $\mu$ is locally Lipschitz.
\end{lem}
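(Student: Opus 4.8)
The strategy is to estimate, for two nearby values $y_0 = \Phi(v_0)$ and $y_1$ in $\Phi(K)$, the difference $\mu(y_1) - \mu(y_0)$ from both sides by a constant times $|y_1 - y_0|_H$ (equivalently the ambient norm on $\mathbb R^n$, since $\Phi$ takes values there). The key tool is Lemma \ref{2solid}: since $\mathrm{Ind}_v\Phi \ge 2 \ge 0$ for every $v \in K$, the map $\Phi$ is $2$-solid at each point of $K$, and — because $K$ is compact in $H$ and the relevant derivative data depend continuously on $v$ in the $H$-topology on bounded sets of $E$ — the constants in the definition of $2$-solidity can be taken uniform over $K$. So there are $c > 0$ and $\varepsilon_0 > 0$ such that for every $v \in K$ and every $\varepsilon < \varepsilon_0$, $\Phi(\mathbf B_v(\varepsilon)) \supset B_{\Phi(v)}(c\varepsilon^2)$, and moreover this persists for maps $C^0$-close to $\Phi$ on the ball.

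**Main estimate.** Fix $v_0 \in K$ and suppose $|y_1 - \Phi(v_0)|$ is small. Choose $\varepsilon$ with $c\varepsilon^2$ comparable to $|y_1 - \Phi(v_0)|$, i.e. $\varepsilon \sim (|y_1-\Phi(v_0)|/c)^{1/2}$. By $2$-solidity there exists $w \in \mathbf B_{v_0}(\varepsilon) \subset E$ with $\Phi(w) = y_1$. Then, using the definition of $\mu$ via the minimum and the fact that $w$ is one competitor,
\[
\mu(y_1) \le \varphi(w) \le \varphi(v_0) + C_1 \|w - v_0\|_E \cdot (\text{Lipschitz bound on } \varphi|_{\text{ball}}),
\]
and more precisely, since $\varphi$ restricted to bounded $H$-sets has controlled first derivatives, $\varphi(w) - \varphi(v_0) \le C\varepsilon$ would only give a Hölder bound. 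To get the genuine Lipschitz bound one must exploit more: the point is that $w$ can be chosen not merely in $\mathbf B_{v_0}(\varepsilon)$ but close to $v_0$ in the graph sense, so that $\varphi(w) - \varphi(v_0)$ is controlled by $|y_1 - \Phi(v_0)|$ rather than by $\varepsilon = |y_1 - \Phi(v_0)|^{1/2}$. I would achieve this by working in the rescaled coordinates $v = \varepsilon^2 v_1 + \varepsilon v_2$ from the proof of Lemma \ref{2solid}: a perturbation of $\Phi(v_0)$ of size $\delta^2$ is produced by moving $v_1$ by $O(\delta^2)$ and $v_2$ by $O(\delta)$, and — crucially — since $v_0$ is a constrained minimizer, the first variation of $\varphi$ in the direction $E_2 = \ker D_{v_0}\Phi$ vanishes, so the $v_2$-displacement of size $O(\delta)$ contributes only $O(\delta^2)$ to $\varphi$, while the $v_1$-displacement of size $O(\delta^2)$ also contributes $O(\delta^2)$. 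Hence $\mu(y_1) - \mu(y_0) \le C\,\delta^2 \sim C'|y_1 - \Phi(v_0)|$, which is the one-sided Lipschitz bound.

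**Symmetrizing.** For the reverse inequality, pick $v_1 \in K$ with $\Phi(v_1) = y_1$ (if $y_1 \in \Phi(K)$) and run the same argument with the roles of $y_0$ and $y_1$ exchanged, obtaining $\mu(y_0) - \mu(y_1) \le C'|y_0 - y_1|$. Combining the two gives $|\mu(y_1) - \mu(y_0)| \le C'|y_1 - y_0|$ for all $y_0, y_1$ in a neighborhood, i.e. $\mu$ is locally Lipschitz on $\Phi(K)$. (One also needs that $\Phi(K)$ has nonempty interior, or at least that points of $\Phi(K)$ are not isolated — but openness of $\{v : \mathrm{Ind}_v\Phi \ge 2\}$ together with $2$-solidity shows $\Phi$ is open near $K$, so $\Phi(K)$ contains a neighborhood of each of its points coming from interior points of $K$; for the Lipschitz conclusion as stated it suffices to estimate between points of $\Phi(K)$.)

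**Expected main obstacle.** The delicate point is the quantitative bookkeeping in the last paragraph of the previous block: upgrading the obvious Hölder-$\tfrac12$ bound (which follows immediately from $2$-solidity plus continuity of $\varphi$) to a genuine Lipschitz bound. This requires using the minimality of $v_0$ to kill the first-order term in the $\ker D_{v_0}\Phi$ direction and carefully tracking that the constants in Lemma \ref{2solid} are uniform over the $H$-compact set $K$ — the uniformity coming from the stated continuity of $\Phi$, $D_v\Phi|_W$ and $D^2_v\Phi|_W$ in the $H$-topology on bounded subsets of $E$. Everything else (choosing competitors, the triangle-inequality symmetrization) is routine.
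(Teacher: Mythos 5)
Your overall architecture (uniform $2$-solidity over the $H$-compact set $K$, a competitor $w$ with $\Phi(w)=y_1$, the symmetrization at the end) is sound, but the one step you yourself flag as delicate --- upgrading the obvious H\"older-$\frac{1}{2}$ bound to a Lipschitz bound --- rests on an assertion that is not justified and is false in the generality in which you state it: namely that ``since $v_0$ is a constrained minimizer, the first variation of $\varphi$ in the direction $E_2=\ker D_{v_0}\Phi$ vanishes.'' That is the Lagrange multiplier rule in its \emph{normal} form, and it can fail when $D_{v_0}\Phi$ is not surjective --- which is precisely the abnormal situation this whole paper is built to handle. A toy example: $E=H=\mathbb R^2$, $\Phi(v)=|v|^2$, $\varphi(v)=v_2$; the origin is (trivially) a constrained minimizer on $\Phi^{-1}(0)=\{0\}$, yet $D_0\varphi\neq 0$ on $\ker D_0\Phi=\mathbb R^2$, and indeed $\mu(s)=-\sqrt{s}$ is H\"older but not Lipschitz at $0$. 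This example has $\mathrm{Ind}_0\Phi<2$, of course; the point is that your argument never uses the hypothesis $\mathrm{Ind}_v\Phi\ge 2$ beyond the weakening to $\ge 0$ (you write ``$\ge 2\ge 0$'' and only ever invoke $2$-solidity of $\Phi$ itself), so as written it would ``prove'' the lemma under a hypothesis for which it fails. The normality you need is in fact a consequence of the full index hypothesis: if $D_{v_0}\varphi$ did not vanish on $\ker D_{v_0}\Phi$, the extended map $(\Phi,\varphi)$ would still satisfy $\mathrm{Ind}_{v_0}\ge \mathrm{Ind}_{v_0}\Phi-2\ge 0$, hence be $2$-solid and open at $v_0$ by Lemma \ref{2solid}, contradicting the minimality of $v_0$. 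Supplying that argument is the real content of the assumption $\mathrm{Ind}_v\Phi\ge 2$, and it is missing from your proposal.

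The paper's proof avoids stating normality at all: it restricts $\Phi$ to $W\cap\ker D_v\varphi$ for a suitable finite-dimensional $W$. Imposing the single extra linear constraint $\ker D_v\varphi$ lowers $\mathrm{Ind}_v$ by at most $2$ (the Morse index of $p\,\mathrm{Hess}_v\Phi$ drops by at most $1$ and $\mathrm{codim\,im}D_v\Phi$ grows by at most $1$), so the restricted map still has $\mathrm{Ind}\ge 0$ and is $2$-solid; the competitor $w\in\mathbf B_v(\varepsilon)\cap W\cap\ker D_v\varphi$ it produces then automatically satisfies $\varphi(w)-\varphi(v)\le c'\|w-v\|^2\le c'\varepsilon^2=(c'/c)\,|y-x|$, with no first-order term to control. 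To repair your version, either prove the normality claim from $\mathrm{Ind}_v\Phi\ge 2$ as indicated above and then run your anisotropic-scaling estimate, or adopt the paper's restriction trick directly.
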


\medskip

\begin{proof}
Given $v$ in $K$, there exists a finite dimensional
subspace $W$ of the Banach space $E$ such that
$\mathrm{Ind}_v\left(\Phi\bigr|_W\right)\ge 2$. Then
$\mathrm{Ind}_v\left(\Phi\bigr|_{W\cap\ker D_v\varphi}\right)\ge
0$. Hence $\Phi\bigr|_{W\cap\ker D_v\varphi}$ is 2-solid at $v$
and
$$
\Phi\left(\mathbf B_v(\varepsilon)\cap W\cap\ker
D_v\varphi\right)\supset B_{\Phi(v)}(c\varepsilon^2)
$$
for some c and any sufficiently small $\varepsilon$.

Let $x=\Phi(v)$ and $|x-y|=c\varepsilon^2$, then $y=\Phi(w)$ for some
$w\in \mathbf B_v(\varepsilon)\cap W\cap\ker D_v\varphi$. We have:
$$
\mu(y)-\mu(x)\le \varphi(w)-\mu(x)=\varphi(w)-\varphi(v)\le
c'|w-v|^2\le c'\varepsilon^2.
$$
Moreover, the compactness of $K$ allows to chose unique $c,c'$ and
the bound for $\varepsilon$ for all $v\in K$. In particular, we
can exchange $x$ and $y$ in the last inequality. Hence
$|\mu(y)-\mu(x)|\le\frac{c'}c|y-x|.$
\end{proof}

\begin{proof}[Proof of Theorem \ref{lip}]

We perform the proof only in the case $M=\mathbb R^n$
in order to simplify the language. Generalization to any manifold
is straightforward. We set
$$
E=\mathbb R^n\times L^\infty([0,T],\Real^k),\
H=\mathbb R^n\times L^2([0,T],\Real^k),
$$
$$
 \Phi(x,u(\cdot))=(x,End_x(u(\cdot))),\
\varphi(x,u(\cdot))=\int_0^1L(x(t),u(t))\,dt
$$
and apply the above results.

First of all, $\mathrm{Ind}_{(x,u(\cdot))}\Phi=\mathrm{Ind}_{u(\cdot)}
End_x=+\infty$ for all $(x,u(\cdot))$ since our system does
not admit sharp controls. Lemma \ref{2solid} implies that $\Phi$ is 2-solid
and $\mathcal D=\Phi(E)$ is open.

Now let $\mathcal B$ be a ball in $E$ equipped with the weak
topology of $H$. The {\it endpoint mapping} $\Phi$ is continuous
as a mapping from $\mathcal B$ to $\mathbb R^{2n}$. Strict
convexity of $L$ implies that there is some constant $c>0$ such that
$$
\varphi(x_n,u_n(\cdot))-\varphi(x,u(\cdot))\ge c\|u_n(\cdot)-u(\cdot)\|^2_{L^2}+o(1)
$$
as $x_n\to x,\ u_n(\cdot)\rightharpoonup u(\cdot)$ and $\ (x_n,u_n(\cdot))\in\mathcal B$. Therefore,
$\lim\limits_{n\to\infty}\varphi(x_n,u_n(\cdot))\ge\varphi(x,u(\cdot))$ and
$\lim\limits_{n\to\infty}\varphi(x_n,u_n(\cdot))=\varphi(x,u(\cdot))$ if and
only if $(x_n,u_n(\cdot))$ converges to $(x,u(\cdot))$ in the strong
topology of $H$.

Assume that $\varphi(x_n,u_n(\cdot))=\mu(\Phi(x_n,u_n(\cdot)))$ for all $n$.
Inequality $\varphi(x,u(\cdot))<\lim\limits_{n\to\infty}\varphi(x_n,u_n(\cdot))$
would imply that

$$\mu(\Phi(x,u(\cdot)))<\lim\limits_{n\to\infty}\mu(\Phi(x_n,u_n(\cdot))).$$

On the other hand, the openness of the map $\Phi$ implies that
$$\mu(\Phi(x,u(\cdot)))\ge\lim\limits_{n\to\infty}\mu(\Phi(x_n,u_n(\cdot))).$$
Hence $\lim\limits_{n\to\infty}\varphi(x_n,u_n(\cdot))=\varphi(x,u(\cdot))$ and
$(x_n,u_n(\cdot))$ converges to $(x,u(\cdot))$ in the strong topology of $H$.

Let $C$ be a compact subset of $\mathcal D$ and
$$
K=\left\{(x,u(\cdot))\in E: \Phi(x,u(\cdot))\in C,\
\varphi(x,u(\cdot))=\mu(\Phi(x,u(\cdot)))\right\}.
$$
Then $K$ is contained in some ball $\mathcal B$. Recall that
$\mathcal B$ is equipped with the weak topology; it is compact.
Now calculations of previous 2 paragraphs imply compactness of
$K$ in the strong topology of $H$. Finally, we derive the Lipschitz
property of $\mu|_C$ from Lemma \ref{Lipschitz}.
\end{proof}

\bigskip

\section{Applications: Mass Transportation on Subriemannian Manifolds}

In this section, we will apply the results in the previous sections
to some subriemannian manifolds. First, let us recall some basic
definitions.

Let $\Delta$ and $\Delta'$ be two (possibly singular) distributions on a manifold $M$. Define the distribution
$[\Delta,\Delta']$ by
$$
[\Delta,\Delta']_x=\text{span}\{[v,w](x)|v \text{ is a section
of }\Delta, w \text{ is a section of }\Delta'\}.
$$
Define inductively the following distributions:
$[\Delta,\Delta]=\Delta^2$ and $\Delta^k=[\Delta,\Delta^{k-1}]$. A
distribution $\Delta$ is called $k$-generating if $\Delta^k=TM$
and the smallest such $k$ is called the degree of nonholonomy.
Also, the distribution is called bracket generating if it is
$k$-generating for some $k$.

If $\Delta$ is a bracket generating distribution, then it defines a flag of distribution by
\[
\Delta\subset\Delta^2\subset...\subset TM.
\]
The growth vector of the distribution $\Delta$ at the point $x$ is defined by
$(\dim\Delta_x,\dim\Delta^2_x,...,\dim T_xM)$. The distribution $\Delta$ is called
regular if the growth vector is the same for all $x$. Let $x(\cdot):[a,b]\to M$ be an
admissible curve, that is a Lipschitz curve almost everywhere tangent to $\Delta$.
The following classical result on bracket generating distributions is the starting point of subriemannian geometry.

\medskip

\begin{thm}(Chow and Rashevskii)
Given any two points $x$ and $y$ on the manifold $M$ with a bracket
generating distribution, there exists an admissible curve joining
the two points.
\end{thm}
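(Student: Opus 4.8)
The plan is to reduce the statement to a local reachability claim and then propagate it globally along a connecting curve. First I would fix $x\in M$ and consider the \emph{attainable set} $\mathcal{A}_x$, the set of endpoints of admissible curves starting at $x$. The key local fact is that $\mathcal{A}_x$ contains a neighborhood of $x$ in $M$ whenever $\Delta$ is bracket generating near $x$. To see this I would work in a coordinate chart, pick vector fields $X_1,\dots,X_k$ spanning $\Delta$ locally (which exist locally even if $\Delta$ is not globally trivializable — this is why the argument is local), and build a map into $M$ by composing flows. The natural object is
\[
\Psi(t_1,\dots,t_n) \;=\; e^{t_n Y_n}\circ\cdots\circ e^{t_1 Y_1}(x),
\]
where each $Y_j$ is one of the fields $\pm X_i$ or an iterated bracket $\pm[X_{i_1},[X_{i_2},\dots]]$; since $\Delta$ is $N$-generating for some $N$, one can choose $Y_1,\dots,Y_n$ among brackets of length $\le N$ so that they span $T_xM$. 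Flows of brackets are not literally admissible, but a bracket flow $e^{t[X_i,X_j]}$ is recovered, up to order $t$, as a concatenation of four admissible flows $e^{\sqrt t X_j}e^{\sqrt t X_i}e^{-\sqrt t X_j}e^{-\sqrt t X_i}$, and similarly for higher brackets; so $\Psi$ can be realized (reparametrizing $t_j\mapsto \mathrm{sgn}(t_j)|t_j|^{1/\ell_j}$, with $\ell_j$ the bracket length) as the endpoint map of a genuine admissible curve depending on the parameters.

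The crux is then to show $\Psi$ (after the reparametrization) is a local submersion at $0$, equivalently that its image contains a neighborhood of $x$. I would argue this by the following standard induction on dimension: let $r=\max\{m : \exists\ Z_1,\dots,Z_m \text{ admissible-flow generators with } d(e^{s_m Z_m}\cdots e^{s_1 Z_1}(x)) \text{ of rank } m \text{ on an open set}\}$, i.e. the maximal dimension of a ``flow box'' built from admissible motions. Such an $r$-dimensional piece $\mathcal{S}$ of $\mathcal{A}_x$ is an immersed submanifold; if $r<n$, then by bracket generation there is some $X_i$ and some point of $\mathcal{S}$ where $X_i$ is not tangent to $\mathcal{S}$ — otherwise all the $X_i$, hence all their brackets, hence $T_xM$, would be tangent to $\mathcal{S}$, forcing $r=n$ — and then adjoining the flow $e^{s_{r+1} X_i}$ produces an $(r{+}1)$-dimensional flow box inside $\mathcal{A}_x$, contradicting maximality. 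Hence $r=n$ and $\mathcal{A}_x$ has nonempty interior; a small refinement of the same argument (applied at an interior point reachable from $x$, then ``steered back'') shows $x$ itself lies in the interior of $\mathcal{A}_x$. I expect this inductive step — rigorously producing the $(r{+}1)$st independent direction and checking the flow-box map stays admissible and smooth after the fractional-power reparametrization — to be the main obstacle; everything else is bookkeeping.

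Finally I would globalize. Define a relation on $M$ by $x\sim y$ if $y\in\mathcal{A}_x$; since one can traverse each generator $X_i$ in both directions (admissible curves are only required to be tangent to $\Delta$, with no sign constraint), this relation is symmetric, and it is clearly reflexive and transitive by concatenation, hence an equivalence relation. The local result shows each equivalence class is open, so the classes partition $M$ into disjoint open sets; since $M$ is connected, there is only one class, i.e. $\mathcal{A}_x=M$ for every $x$. Given the two prescribed points $x$ and $y$, this yields an admissible curve from $x$ to $y$, which is what was claimed. (If one only assumes $\Delta$ bracket generating at each point rather than with globally bounded degree of nonholonomy, the same proof works, taking $N=N(x)$ locally; connectedness still forces a single class.)
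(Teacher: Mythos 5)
The paper does not prove this theorem: it is stated as a classical result (``the starting point of subriemannian geometry'') with the reader referred to Montgomery's book for this circle of ideas, so there is no in-paper argument to compare against. Your plan is the standard and correct one (essentially Chow's original argument / the orbit-theorem proof): show the attainable set $\mathcal A_x$ is open by an induction on the rank of flow-box maps built from concatenated flows of local generators of $\Delta$, using that vector fields everywhere tangent to an immersed submanifold have all their brackets tangent to it, then conclude by connectedness since the reachability classes are open (and symmetric, as $\Delta$-tangent curves are reversible). Two remarks. First, your opening paragraph about the map $\Psi$ built from bracket flows and the commutator approximation $e^{\sqrt t X_j}e^{\sqrt t X_i}e^{-\sqrt t X_j}e^{-\sqrt t X_i}$ with fractional reparametrizations is logically redundant once you run the flow-box induction: that machinery is what one needs for the \emph{quantitative} Ball--Box estimate (which the paper invokes separately for H\"older continuity of $d$), not for bare connectivity, and making it rigorous is considerably more delicate than the induction you actually use; you should either drop it or clearly separate it as the refinement leading to Ball--Box. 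Second, the ``steer back'' step deserves one explicit line: if $\Psi_n(s)=e^{s_nZ_n}\cdots e^{s_1Z_1}(x)$ has rank $n$ at $s^0$, then $s\mapsto e^{-s^0_1Z_1}\cdots e^{-s^0_nZ_n}\Psi_n(s)$ is a diffeomorphism near $s^0$ onto a neighborhood of $x$ whose points are all reachable from $x$, which is what puts $x$ in the interior of $\mathcal A_x$ and makes the classes open. With those points tidied, the proof is complete.
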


\medskip

Using Chow-Rashevskii Theorem, we can define the subriemannian distance $d$. Let $<,>$
be a fibre inner product on the distribution $\Delta$, called subriemannian metric.
The length of an admissible curve $x(\cdot)$ is defined in the usual
way: $\text{length}(x(\cdot))=\int_a^b\sqrt{<\dot x(t),\dot x(t)>}\,dt$. The subriemannian
distance $d(x,y)$ between two points $x$ and $y$ is defined by the infimum of the length
of all admissible curves joining $x$ and $y$.
There is a quantitative version of Chow-Rashevskii Theorem, called Ball-Box Theorem,
which gives H\"{o}lder continuity of the subriemannian distance. See \cite{Mo2} for detail.

\medskip

\begin{cor}\label{lip2}
Let $d_S$ be the metric of a complete subriemannian space with distribution
$\Delta$. Function $d_S^2$ is locally Lipschitz if and only if the distribution is
2-generating.
\end{cor}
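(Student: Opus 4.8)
The plan is to prove the two implications separately: the ``if'' direction will rest on Theorem \ref{lip} combined with the Goh condition, and the ``only if'' direction on the Ball--Box theorem. Suppose first that $\Delta$ is $2$-generating. Near any point $\Delta$ is spanned by smooth vector fields $X_1,\dots,X_k$, and then $d_S^2$ is exactly the optimal control cost (\ref{simplecost}) of the system (\ref{simplecontrol}) with Lagrangian $L(x,u)=\sum_i u_i^2$; when $\Delta$ is not trivializable one replaces (\ref{simplecontrol}) by the intrinsically defined control system with bundle $V=\Delta$ from Section 2, and nothing below changes. This $L$ clearly satisfies the hypotheses of Theorem \ref{mintraj} (bounded below; $|u|/(L(x,u)+1)\to0$ as $|u|\to\infty$; $\partial L/\partial x\equiv0$; $u\mapsto|u|^2$ strongly convex), and metric completeness of the subriemannian space ensures that the control system is complete and minimizers exist. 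The point I would then establish is that \emph{the system $(\ref{simplecontrol})$ admits no sharp controls}. Indeed, if a control $u(\cdot)$ with trajectory $x(\cdot)$ were sharp, there would be a nonzero covector $p$ with $pD_{u(\cdot)}End_{x(0)}=0$ and $\mathrm{ind}(p\,\mathrm{Hess}_{u(\cdot)}End_{x(0)})<+\infty$, and Proposition \ref{Goh} (the Goh condition) would force
\[
p(t)(X_i(x(t)))=p(t)([X_i,X_j](x(t)))=0,\qquad i,j=1,\dots,k,\ 0\le t\le1,
\]
for the nowhere-zero covector curve $p(t)=P_{t,1}^*p$. But the vectors $X_i(x(t))$ together with the brackets $[X_i,X_j](x(t))$ span $\Delta^2_{x(t)}=T_{x(t)}M$, so $p(t)=0$ --- a contradiction. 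Hence Theorem \ref{lip} applies: the set $D=\{(x,End_x(u(\cdot)))\}$ is open in $M\times M$ and $c=d_S^2$ is locally Lipschitz on $D$. A $2$-generating distribution is bracket generating, so by the Chow--Rashevskii theorem $D=M\times M$, and $d_S^2$ is locally Lipschitz on all of $M$.

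For the converse I would argue by contraposition. Assume $\Delta$ is not $2$-generating and fix a point $x_0$ with $\Delta^2_{x_0}\neq T_{x_0}M$; then the step of $\Delta$ at $x_0$ is at least $3$. In privileged coordinates centred at $x_0$ one of the coordinates, say $y^j$, has weight $w\ge3$ (because $\Delta^2_{x_0}$ is a proper subspace of $T_{x_0}M$), and the Ball--Box theorem (see \cite{Mo2}) provides a constant $c>0$ with $d_S(x_0,y)\ge c\,|y^j|^{1/w}$ for all $y$ near $x_0$. Evaluating at the point $y_t$ whose $j$-th coordinate equals $t$ and whose other coordinates vanish,
\[
\frac{d_S^2(x_0,y_t)}{|y_t-x_0|}\ \ge\ c^2\,|t|^{2/w-1}\ \longrightarrow\ +\infty\qquad(t\to0),
\]
since $2/w-1<0$. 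Therefore $d_S^2$ is not locally Lipschitz in any neighbourhood of $x_0$, which gives the contrapositive.

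I expect the main obstacle to be the lower bound $d_S(x_0,y)\ge c\,|y^j|^{1/w}$ in the converse when $x_0$ is \emph{not} an equiregular point of $\Delta$: one must use the version of the Ball--Box estimate phrased through the nonholonomic order at the single point $x_0$, or, equivalently, prove directly in privileged coordinates that along any admissible curve of length $\ell$ issuing from $x_0$ the weight-$w$ coordinate changes by at most $O(\ell^{w})$ (this requires the weighted order estimates for the fields $X_i$ together with a Gronwall argument). Everything else is routine bookkeeping: in the ``if'' direction, besides Theorem \ref{lip} and Proposition \ref{Goh}, one only needs to pass to the intrinsic control system so as to drop trivializability, to deduce the completeness and minimizer-existence hypotheses from metric completeness, and to use the elementary identity $\text{span}\{X_i(x),[X_i,X_j](x)\}=\Delta^2_x$ which converts the Goh condition into $p(t)\equiv0$.
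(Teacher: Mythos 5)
Your proof is correct and follows essentially the same route as the paper: the ``if'' direction rules out sharp controls via the Goh condition (Proposition \ref{Goh}) and then invokes Theorem \ref{lip}, while the ``only if'' direction uses the Ball--Box estimate to show $d_S^2$ fails to be Lipschitz at the diagonal when the nonholonomy degree exceeds $2$. The paper's proof is just a two-sentence sketch of this same argument; you have merely filled in the details (and correctly flagged the only real technical point, namely the one-sided Ball--Box lower bound at a possibly non-equiregular point).
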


\medskip

\begin{proof}
The systems with 2-generating distributions do not admit sharp paths because
these systems are not compatible with the Goh condition. On the other hand, constant
paths (points)
are sharp minimizers in the case of distributions whose nonholonomy degree is greater
than 2 and
the ball-box theorem implies that $d^2$ is not locally Lipschitz at the diagonal in
this case.
\end{proof}

Combining Corollary \ref{lip2} with Theorem \ref{main}, we prove the existence and uniqueness of optimal map for subriemannian manifold with 2-generating distribution.

\begin{thm}\label{maintransport}
Let $M$ be a complete subriemannian manifold defined by a 2-generating distribution, then there exists a unique (up to $\mu$-measure zero) optimal map to the Monge's problem with the cost $c$ given by $c=d_S^2$. Here $d_S$ is the subriemannian distance of $M$.
\end{thm}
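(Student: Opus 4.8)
The plan is to deduce the statement from Theorem~\ref{main}, whose five hypotheses I will check for the control cost representing $d_S^2$. Realize $c=d_S^2$ as an optimal control cost of the form (\ref{cost}): locally, choose an orthonormal frame $X_1,\dots,X_k$ of $\Delta$ and take the system $\dot x=\sum_i u_i X_i(x)$ with control set $U=\Real^k$ and Lagrangian $L(x,u)=\sum_i u_i^2$; globally, when $\Delta$ is not trivializable, use the intrinsic formulation of Remark~\ref{generalcontrol} with $V=\Delta$, $F$ the inclusion $\Delta\hookrightarrow TM$, and $L(v)=\langle v,v\rangle^S$. By Cauchy--Schwarz, minimizing $\int_0^1\sum_i u_i^2\,dt$ over admissible curves from $x$ to $y$ equals $\big(\inf\text{length}\big)^2=d_S^2(x,y)$, so the associated cost is $d_S^2$ and a constant-speed minimizer realizes it. We take $\mu$ and $\nu$ compactly supported with $\mu$ absolutely continuous with respect to Lebesgue; this is hypothesis~(1).

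Hypothesis~(2): as $\Delta$ is $2$-generating, Corollary~\ref{lip2} gives that $d_S^2$ is locally Lipschitz on $M\times M$; on a product $K_1\times K_2$ of compact sets it is Lipschitz with one constant, so $x\mapsto d_S^2(x,y)$ is locally Lipschitz with a constant locally independent of $y$, and $d_S^2\ge0$ is bounded below. Hypothesis~(3): by Chow--Rashevskii, $c(x,y)=d_S^2(x,y)<\infty$ for every pair, and $L(x,u)=\sum_i u_i^2$ satisfies the three requirements of Theorem~\ref{mintraj} (bounded below; $|u|/(L+K)\to0$; $\partial L/\partial x\equiv0$; $u\mapsto L(x,u)$ strongly convex), while metric completeness of $(M,d_S)$ provides the compactness needed to apply that theorem, since an admissible curve issuing from $x_0$ with energy $\le E$ stays in the compact ball $\bar B_{d_S}(x_0,\sqrt{E})$. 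Hence a minimizer exists; by Theorem~\ref{mintraj} it is normal or sharp, but --- exactly as in the proof of Corollary~\ref{lip2} --- a $2$-generating distribution has no sharp paths, because the Goh condition (Proposition~\ref{Goh}) would force a nonzero covector to annihilate the $X_i$ and the $[X_i,X_j]$, which together span $TM$. So the minimizer is normal, hence carried by a Lipschitz trajectory $p(\cdot)$ of $\overrightarrow H$ with control $u_i(t)=\frac12\langle p(t),X_i(x(t))\rangle$, which is continuous and thus locally bounded measurable. Hypothesis~(4): the maximized Hamiltonian (\ref{maxHam}) is $H(p_x)=\frac14\sum_i\langle p_x,X_i(x)\rangle^2$ (intrinsically, $\frac14$ times the fibre cometric of $\Delta$ evaluated on $p_x|_{\Delta_x}$), the maximum over $u\in\Real^k$ being attained at $u_i=\frac12\langle p_x,X_i(x)\rangle$; this is a fibrewise positive-semidefinite quadratic form depending smoothly on $x$, hence $C^\infty$, in particular well-defined and $C^2$.

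The remaining hypothesis~(5), completeness of the flow $e^{t\overrightarrow H}$, is the point I expect to be the \emph{main obstacle}, and it is where metric completeness of $(M,d_S)$ enters essentially: it is a sub-Riemannian Hopf--Rinow type statement that the normal geodesic (Hamiltonian) flow of $H$ is complete. The scheme is to suppose a trajectory $p(\cdot)$ is maximally defined on $[0,T)$ with $T<\infty$; since $H$ is a first integral of $\overrightarrow H$, the base curve $x(t)=\pi(p(t))$ has constant speed $|\dot x(t)|^2=H(p(t))$ and hence finite length on $[0,T)$, so $x(t)\to x(T)\in M$ as $t\to T$ by completeness; it remains to show the covector does not escape to infinity in the fibre over $[0,T)$. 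The delicate part is that $H$ bounds only the components of $p(t)$ tangent to $\Delta$, so one must control the transverse momenta as well; this is carried out in a local frame around $x(T)$, writing the differential equations for the momentum coordinates along the trajectory and estimating them using that the base curve stays in a fixed compact set. Granting this, all five hypotheses of Theorem~\ref{main} hold for $c=d_S^2$, and that theorem produces the unique (up to $\mu$-measure zero) optimal map $x\mapsto\varphi_1(x)=\pi\big(e^{\overrightarrow H}(-df_x)\big)$, which is the assertion.
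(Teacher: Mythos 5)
Your proposal follows exactly the paper's route: the paper's entire proof of this theorem is the single sentence ``combine Corollary \ref{lip2} with Theorem \ref{main}'', and you carry out precisely that verification of the five hypotheses of Theorem \ref{main}, using Theorem \ref{mintraj} together with the Goh condition (Proposition \ref{Goh}) for completeness of the cost, just as the paper intends. You are in fact more careful than the paper, which never addresses hypothesis (5) (completeness of $e^{t\overrightarrow{H}}$); your sketch of that step is the correct one and does close up, since conservation of $H$ bounds the horizontal momenta $h_i=\langle p,X_i\rangle$, and the $2$-generating condition makes the functions $h_i$ and $h_{ij}=\langle p,[X_i,X_j]\rangle$ a coordinate system on the fibres satisfying a linear ODE with coefficients bounded over the compact set in which the base curve stays.
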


\medskip

\begin{rem}
The locally Lipschitz property of the distance $d$ out of the diagonal is guaranteed
for much bigger class of distribution. In particular, it is proved in \cite{AgGa} that
generic distribution of rank $>2$ does not admit non-constant sharp trajectories. In the
class of Carnot groups, the following estimates are valid: Generic $n$-dimensional
Carnot group with rank $k$ distribution does not admit nonconstant sharp trajectories
if $n\le(k-1)k+1$ and has nonconstant sharp length minimizing trajectories if
$n\ge(k-1)(\frac{k^2}3+\frac{5k}6+1)$. Recall that a simply-connected Lie group
endowed with a left-invariant distribution $V_1$ is a Carnot group if the Lie algebra
$\mathfrak g$ is a graded nilpotent Lie algebra such that it is Lie generated by the
block with lowest grading
(i.e. $\mathfrak g=V_1\oplus V_2\oplus...\oplus V_k$, $[V_i,V_j]=V_{i+j}$,
$V_i=0 \text{ if $i>k$}$ and $V_1$ Lie-generates $\mathfrak g$).

Clearly, if the cost is locally Lipschitz out of the diagonal,
then the statement of Theorem~4.1 remains valid with the extra
assumption that the supports of the initial $\mu$ and the final
measures $\nu$ are disjoint: $supp(\mu)\cap supp(\nu)=\emptyset$.
\end{rem}

\bigskip

\section{Normal minimizers and Property of Optimal Map with Continuous Optimal Control Cost}

According to Theorem \ref{lip}, it remains to study the case where sharp controls exist.
In this section, we will prove a property of optimal map when the cost is continuous.
Normal minimizers will play a very important role.

We continue to study optimal control problem (20), (21). As we
already mentioned, strictly abnormal minimizers must be sharp. In
addition, if $X_0=0$, then optimal control cost is continuous.
According to the discussion at the end of the previous section, we
expect strictly abnormal minimizers mainly for generic rank 2
distributions on the manifold of dimension greater than 3 and for
generic Carnot groups of big enough corank. In these situations,
strictly abnormal minimizers are indeed unavoidable.

The existence of strictly abnormal minimizers for subriemannian manifolds is first done
in \cite{Mo1}. In \cite{Su} and \cite{LiSu}, it is shown that there are many strictly
abnormal minimizers in general for subriemannian manifolds. (See, for instance,
Theorem \ref{LiuSus} below.) Finally, a general theory on abnormal minimizers for
rank 2 distributions is developed in \cite{AgSa2}. See \cite{Mo2} for a detail account
on the history and references on abnormal minimizers.

Here is a sample result in \cite{Su} which is of interest to us.

\begin{thm}\label{LiuSus}(Liu and Sussman)
Let $M$ be a 4-dimensional manifold with a rank 2 regular bracket generating
distribution $\Delta$ and subriemannian metric $<,>$. Let $X_1$ and $X_2$ be two global sections of $\Delta$ such that
\begin{enumerate}
\item $X_1$ and $X_2$ are everywhere orthonormal,
\item $X_1$, $X_2$, $[X_1,X_2]$ and $[X_2,[X_1,X_2]]$ are everywhere linearly dependent,
\item $X_2$, $[X_1,X_2]$ and $[X_2,[X_1,X_2]]$ are everywhere linearly independent.
\end{enumerate}
Then any short enough segments of the integral curves of the vector field $X_2$ are strictly abnormal minimizers.
\end{thm}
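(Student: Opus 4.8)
The plan is to show, for every $x_0\in M$, that a short arc of the integral curve $\hat x(t)=e^{tX_2}(x_0)$ — which is the admissible path of the constant control $\hat u\equiv(0,1)$ for the system $\dot x=u_1X_1+u_2X_2$ — is a corank-one abnormal extremal which is $C^0$-rigid; rigidity then yields both that $\hat x$ is length minimizing and that it is not normal, hence strictly abnormal. \emph{Step 1 (abnormality).} The standing hypotheses (rank two, bracket generating, regular, $\dim M=4$) force the growth vector of $\Delta$ to be $(2,3,4)$, so $\Delta^2:=\text{span}\{X_1,X_2,[X_1,X_2]\}$ has rank $3$; since $X_1,X_2,[X_1,X_2]$ are then independent, hypothesis (2) says exactly that $[X_2,[X_1,X_2]]\in\Delta^2$. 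Linearizing $End_{x_0}$ at $\hat u$, the image of $d_{\hat u}End_{x_0}$ at $\hat x(1)$ is the span of the iterated brackets $X_i,\,[X_2,X_i],\,[X_2,[X_2,X_i]],\dots$ at $\hat x(1)$, $i=1,2$; because $\mathrm{ad}_{X_2}$ preserves $\Delta^2$ (a consequence of (2)), this image equals $\Delta^2_{\hat x(1)}$. Hence $\hat u$ is a singular control of corank one, whose unique-up-to-scale Lagrange multiplier is the covector $p(t)$ annihilating $\Delta^2_{\hat x(t)}$; since $p(t)$ kills $[X_1,X_2](\hat x(t))$ the Goh condition of Proposition~\ref{Goh} holds automatically, and $(\hat x,\hat u,p)$ is readily checked to satisfy the abnormal Pontryagin equations. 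So $\hat x$ is an abnormal extremal — in fact the only kind, since $[X_1,[X_1,X_2]]\notin\Delta^2$ forces $u_1\equiv0$ in the Goh-differentiated equation.

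\emph{Step 2 (rigidity, hence minimality and strict abnormality).} From $[X_1,[X_1,X_2]]\notin\Delta^2$ one gets $\langle p(t),[X_1,[X_1,X_2]](\hat x(t))\rangle\neq0$ along the arc, i.e.\ the strict generalized Legendre condition. Hypothesis (3), that $X_2$, $[X_1,X_2]$, $[X_2,[X_1,X_2]]$ are everywhere independent, is the non-degeneracy making $\hat x$ a \emph{regular} abnormal extremal in the sense of \cite{Su}; it guarantees that the intrinsic Hessian $p\,\mathrm{Hess}_{\hat u}End_{x_0}$ on $\ker d_{\hat u}End_{x_0}$, whose leading term is governed by the Legendre coefficient above, is sign-definite for a suitable choice of sign of $p$. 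Corank one together with this definiteness is the input to the rigidity theorem for regular abnormal extremals: running the second-order analysis behind Lemma~\ref{2solid} and Theorem~20.3 of \cite{AgSa1}, but in the regime where $p\,\mathrm{Hess}$ is definite (which produces rigidity rather than openness), one obtains that for $T$ small enough $\hat u|_{[0,T]}$ is $C^0$-rigid: the only admissible controls $C^0$-close to it steering $x_0$ to $\hat x(T)$ are time reparametrizations of $\hat u$. Rigidity makes $\hat x|_{[0,T]}$ a strict local minimizer of length (a unit-speed curve minimizes length among its reparametrizations), and since $d(\hat x(0),\hat x(T))\to0$ as $T\to0$, shrinking $T$ further turns it into a genuine minimizer between its endpoints. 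Finally $\hat x$ is not normal: a normal extremal has a locally submersive endpoint map and hence admits a nontrivial family of admissible controls with the same endpoints, contradicting rigidity; so $\hat x$ is strictly abnormal (and, by Theorem~\ref{mintraj}, automatically a sharp path — which is why this family matters for Sections 5--7).

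\emph{The main obstacle.} Every step but the rigidity is Lie-bracket bookkeeping together with the Pontryagin equations; the hard point is passing from ``corank one plus definite Hessian'' to genuine local uniqueness of the steering control. This needs control of $End_{x_0}$ beyond its quadratic Taylor expansion at the singular point together with a degree/Brouwer-type argument — the technical core of \cite{Su}, parallel to the proof of Theorem~20.3 in \cite{AgSa1} — and hypothesis (3) is exactly what preserves definiteness of the controlling quadratic form throughout that higher-order analysis, while hypothesis (2) is what makes $\hat x$ abnormal in the first place.
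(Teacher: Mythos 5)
First, a point of reference: the paper offers no proof of this statement at all --- it is quoted verbatim as ``a sample result in \cite{Su}'' (see also \cite{LiSu}), so there is no internal argument to compare yours against. Judged on its own, your sketch has the right architecture for the parts it gets right: Step 1 is essentially correct. Regularity plus bracket generation in dimension $4$ does force the growth vector $(2,3,4)$, hypothesis (2) then reads $[X_2,[X_1,X_2]]\in\Delta^2$, the image of $d_{\hat u}End_{x_0}$ is exactly $\Delta^2_{\hat x(1)}$ because $\mathrm{ad}_{X_2}$ preserves the module generated by $X_1,X_2,[X_1,X_2]$, and the annihilator of $\Delta^2$ gives the Goh covector, so the arc is a corank-one abnormal extremal. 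Likewise, $[X_1,[X_1,X_2]]\notin\Delta^2$ (forced by $\Delta^3=TM$) is indeed the strong generalized Legendre condition, and invoking the rigidity theory of \cite{Su,AgSa2} for the minimality is the route those papers actually take.

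There are, however, two genuine gaps. The serious one is your argument for \emph{strict} abnormality: the claim that ``a normal extremal has a locally submersive endpoint map'' is false, and the paper itself warns against exactly this (``a minimizer can simultaneously be normal and abnormal''). Indeed you have just computed that $End_{x_0}$ has corank one at $\hat u$; that is perfectly compatible with $\hat x$ admitting a normal lift, so rigidity cannot be played off against normality this way. The correct argument is a direct computation with the normal Hamiltonian system: a normal lift $q(t)$ would satisfy $\langle q,X_1\rangle\equiv 0$, $\langle q,X_2\rangle\equiv 1$, hence $\langle q,[X_1,X_2]\rangle\equiv 0$, and differentiating once more forces $\langle q,[X_2,[X_1,X_2]]\rangle\equiv 0$; writing $[X_2,[X_1,X_2]]=aX_1+bX_2+c[X_1,X_2]$ this says $b\equiv 0$ along the arc, and one must show the hypotheses exclude this. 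Your sketch never performs this step, and relatedly it misassigns the role of hypothesis (3): as you use it, (3) only says $a\neq 0$, while the Legendre condition you need for rigidity you already derived from (2) and bracket generation alone. The second, smaller gap (which you partly acknowledge) is the passage from $C^0$-rigidity to genuine minimality between the endpoints: being the unique admissible curve in a $C^0$-neighbourhood joining $\hat x(0)$ to $\hat x(T)$ does not become global minimality merely because $T$ is small --- one must show every competitor of comparable length stays in that neighbourhood, and this uniform localization is a substantial part of the analysis in \cite{Su} and of the ``strong minimality'' proof in \cite{AgSa2}, not a corollary of Lemma \ref{2solid}.
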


We call a local flow a strictly abnormal flow if the corresponding trajectories are all strictly abnormal minimizers. An interesting question is whether time-1 map of an abnormal flow is an optimal map. The following theorem shows that this is not the case for any reasonable initial measure and continuous cost.

\medskip

\begin{thm}\label{NormOpt}
Assume that the cost $c$ in (\ref{optimal}) is continuous, bounded below and the support of the measure $\mu$ is equal to the closure of its interior. If $\varphi:M\to M$ is a continuous map such that $(id\times\varphi)_*\mu$ achieves the infimum in Problem \ref{KanRe}, then $x$ and $\varphi(x)$ are connected by a normal minimizer on a dense set of $x$ in the support of $\mu$.
\end{thm}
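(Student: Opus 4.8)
The plan is to argue by contradiction using the duality machinery of Section 3. Suppose the set of points $x$ in $\operatorname{supp}\mu$ that are joined to $\varphi(x)$ by a normal minimizer is not dense; then there is an open ball $B$ with $B\cap\operatorname{supp}\mu\neq\emptyset$ such that for every $x\in B$, no minimizer from $x$ to $\varphi(x)$ is normal. Since strictly abnormal minimizers are sharp (Theorem \ref{mintraj} and the discussion preceding it), every minimizing trajectory from $x$ to $\varphi(x)$ with $x\in B$ is then a sharp path. The idea is to derive a contradiction with optimality of the plan $(id\times\varphi)_*\mu$, exploiting the fact that sharpness is a genuine instability: near a sharp minimizer one can perturb the endpoint in certain directions at ``second-order cost,'' which is incompatible with the first-order (subdifferential) behaviour forced by a continuous optimal cost.

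First I would set up the dual potential: since $c$ is continuous and bounded below and $C(\mu,\nu)<\infty$, Theorem \ref{dualexist} gives a $c$-concave $f$ with $(f,f^{c_1})$ optimal, and by Theorem \ref{existjoint} the plan $(id\times\varphi)_*\mu$ is concentrated on $\{f(x)+f^{c_1}(y)=c(x,y)\}$; hence $f(x)+f^{c_1}(\varphi(x))=c(x,\varphi(x))$ for $\mu$-a.e.\ $x$, and by continuity of $\varphi$ and $f,f^{c_1}$ (upper semicontinuity from Theorem \ref{dualregular}) this holds on all of $\operatorname{supp}\mu$ where $f$ is finite. For a fixed such $x_0\in B$ and an optimal trajectory $x(\cdot)$ from $x_0$ to $y_0=\varphi(x_0)$, I would run the argument of Proposition \ref{dualmin}/\ref{appPMP}: optimality of $x(\cdot)$ together with the dual inequality $f(x)+f^{c_1}(y_0)\le c(x,y_0)$ says that $x_0$ minimizes $x\mapsto c(x,y_0)-f(x)$, i.e.\ the reversed trajectory solves a Bolza problem with terminal data $f$ at $x_0$. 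The key point is that the Pontryagin maximum principle (Theorem \ref{PMPB}) then produces an adjoint covector $p(\cdot)$ with $p(0)\in d^-f_{x_0}$ (any element of the subdifferential works), and because our minimizer is strictly abnormal, this $p$ must be an abnormal (singular) adjoint — it is not a trajectory of $\overrightarrow H$; equivalently the endpoint map $End_{x_0}$ is singular along the control, and there is a nonzero $p$ annihilating its differential with finite-index Hessian.

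The main obstacle — and the heart of the proof — is converting sharpness into a contradiction with the existence of a supporting ``potential'' $f$. Here is the mechanism I would use. Sharpness of the control means $\operatorname{Ind}$ of the endpoint map is finite along that control, so (unlike the $2$-solid situation of Lemma \ref{2solid}) the second-order approximation $Q$ of $End_{x_0}$ restricted to $\ker D\varphi$ has image contained in a quadric of finite Morse index; in particular there is a direction $\xi\in T_{y_0}M$ such that the cost to reach $y_0+t\xi$ from $x_0$ is bounded above by $c(x_0,y_0)+o(t)$ along a curve of perturbed controls whose own Lagrangian cost increases by $O(t)$ — more precisely, because the abnormal adjoint $p(0)$ pairs to zero with the first-order endpoint variations, the normal ``slope'' $df$ cannot simultaneously be dominated by $c$ near $y_0$ in \emph{all} directions unless $p(0)=0$, contradicting abnormality. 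I would make this quantitative by the substitution trick of Lemma \ref{2solid}: rescale controls as $\varepsilon^2 v_1+\varepsilon v_2$ and read off that the map $y\mapsto \min\{\text{cost from }x_0\text{ to }y\}$ has a one-sided directional slope strictly less than $-f^{c_1}$ predicts along the abnormal cokernel direction, violating $f(x_0)+f^{c_1}(y)\le c(x_0,y)$ for $y$ near $y_0$. Since $\operatorname{supp}\mu=\overline{\operatorname{int}\operatorname{supp}\mu}$, the ball $B$ meets the interior, so this estimate holds on a set of positive $\mu$-measure, and integrating the strict inequality against $\mu$ (using that $\varphi$ pushes $\mu$ to $\nu$ and $f^{c_1}\in L^1(\nu)$) yields $\int f\,d\mu+\int f^{c_1}\,d\nu < C(\mu,\nu)$, contradicting optimality of the dual pair. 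Hence the set where the connecting minimizer is normal must be dense.

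A technical wrinkle to handle carefully: $f$ is only known to be upper semicontinuous and differentiable $\mu$-a.e., not Lipschitz (we are now in the regime where the Lipschitz hypothesis of Theorem \ref{main} may fail). So I would phrase the contradiction purely in terms of the subdifferential $d^-f$ and the one-sided cost estimate, never needing $df$ to exist; the continuity of $c$ is what lets me pass from the a.e.\ equality $f+f^{c_1}\circ\varphi=c(\cdot,\varphi(\cdot))$ to an everywhere statement on $\operatorname{supp}\mu$, and the condition $\operatorname{supp}\mu=\overline{\operatorname{int}\operatorname{supp}\mu}$ is exactly what guarantees the ``bad'' open ball $B$ carries positive mass. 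I expect the delicate estimate — that sharpness forces a strictly-better-than-predicted second-order perturbation of the endpoint, uniformly for a family of nearby initial points — to be the step requiring the most care, and it is essentially a localized, quantitative restatement of the arguments behind Lemma \ref{2solid} and Proposition \ref{Goh}.
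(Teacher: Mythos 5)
Your setup is correct and matches the paper's: you obtain the dual pair $(f,f^{c_1})$, use upper semicontinuity of $f$ and $f^{c_1}$ together with continuity of $c$ and $\varphi$ to upgrade the $\mu$-a.e.\ equality $f(x)+f^{c_1}(\varphi(x))=c(x,\varphi(x))$ to every $x$ in $\mathrm{supp}(\mu)$, and observe that $x$ then minimizes $z\mapsto c(z,\varphi(x))-f(z)$. From there, however, the argument goes wrong in two ways. First, your use of the maximum principle is backwards: Theorem \ref{PMPB} (as proved in the appendix, where the multiplier $\tilde l\equiv 1$) produces a \emph{normal} adjoint $p(\cdot)$ with $p(0)=-\alpha$ for any $\alpha\in d^-f_x$ whenever $f$ is subdifferentiable at $x$; it does not produce an abnormal adjoint merely because the minimizer is strictly abnormal. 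The correct reading is the contrapositive: subdifferentiability of $f$ at $x$ forces the minimizer from $x$ to $\varphi(x)$ to be normal. Second, the entire weight of your contradiction rests on the unproved claim that sharpness yields a second-order endpoint perturbation violating $f(x_0)+f^{c_1}(y)\le c(x_0,y)$. This is the whole content of your argument and it is only sketched; moreover there is no reason to expect such a universal incompatibility between sharp minimizers and a supporting potential, since Section 8 of the paper exhibits optimal maps built entirely from (strictly abnormal, hence sharp) singular trajectories. The hypothesis of continuity of $c$ does not play the role you assign to it.

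What is missing is the short direct argument. The everywhere-equality $f(x)=c(x,\varphi(x))-f^{c_1}(\varphi(x))$ on $\mathrm{supp}(\mu)$ makes $f$ lower semicontinuous there (because $c\circ(id\times\varphi)$ is continuous and $f^{c_1}$ is upper semicontinuous), hence continuous on $\mathrm{supp}(\mu)$; and a continuous function is subdifferentiable on a dense set. At each such point, Proposition \ref{dualmin} and Theorem \ref{appPMP} produce a normal extremal lift of the minimizer joining $x$ to $\varphi(x)$, which is exactly the conclusion. No contradiction, no analysis of the Hessian of the endpoint map, and no use of sharpness is required.
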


\medskip

\begin{proof}
By Theorem \ref{dualexist}, there exists a function $f:M\to\Real\cup\{-\infty\}$ such that $f$ and its $c_1$-transform achieve the supremum in Problem \ref{dual}. Moreover, by Theorem \ref{dualregular}, the functions $f$ and $f^{c_1}$ are upper semicontinuous. By Theorem \ref{existjoint},
\begin{equation}\label{NormOpt1}
f(x)+f^{c_1}(\varphi(x))=c(x,\varphi(x))
\end{equation}
for $\mu$-almost all $x$. By upper semicontinuity of $f$ and $f^{c_1}$,
\[
f(x)+f^{c_1}(\varphi(x))\geq c(x,\varphi(x)).
\]
But $f(x)+f^c(y)\leq c(x,y)$ for any $x,y$ in the manifold $M$. So, (\ref{NormOpt1}) holds for all $x$ in the support $U$ of $\mu$. Therefore, $x$ achieves the infimum $f^{c_1}(\phi(x))=\inf_{z\in M}[c(z,\phi(x))-f^{c_1}(z)]$ for all $x$ in the support of $\mu$. Moreover, using (\ref{NormOpt1}), it is easy to see that the function $f$ is continuous on $U$. In particular, it is subdifferentiable on a dense set of $U$.
By Proposition \ref{dualmin} and Theorem \ref{appPMP}, $x$ and $\varphi(x)$ is connected by a normal minimizer if $f$ is subdifferentiable at $x$. This proves the theorem.
\end{proof}

\bigskip

\section{Optimal Maps with Abnormal Minimizers}

In this section, we describe an important class of control systems
which admit smooth optimal maps built essentially from abnormal
minimizers. Recall that abnormal minimizers are singular
trajectories of the control system whose definition does not depend on the
Lagrangian.

Let $\rho:M\stackrel{G}{\To}N$ be a smooth principal bundle where
the structural group $G$ is a connected Abelian Lie group. Let
$X_1,\ldots,X_k$ be the vertical vector fields which generate
the action of $G$. Consider the following control system

\begin{equation}\label{controlagainagain}
\dot x(t)=X_0(x(t))+\sum\limits_{i=1}^ku_i(t)X_i(x(t)),
\end{equation}
where $X_0$ is a smooth vector field on $M$, and the re-scaled systems
\begin{equation}\label{controlrescaled}
\dot x(t)=\varepsilon X_0(x(t))+\sum\limits_{i=1}^ku_i(t)X_i(x(t))
\end{equation}
for $\varepsilon>0$.

We define the Hamiltonian $H:T^*N\to\Real$ by
\begin{equation}\label{abHam}
H(p_x)=\max\{p_x(d\rho(X_0(y))|y\in\rho^{-1}(x)\}
\end{equation}
where $p_x$ is a covector in $T^*N$. We assume that the maximum
above is achieved for any $p$ in  $T^*N$ and is finite.

Typical example is the Hopf bundle $
\phi:\mathrm{SU(2)}\stackrel{S^1}{\longrightarrow}S^2 $ and a
left-invariant vector field $F_0$. Then $H(p)=\alpha|p|$, where
$\alpha$ is a constant and $|p|$ is the length of the covector $p$
with respect to the standard (constant curvature) Riemannian
structure on the sphere. (See \cite[Section 22.2]{AgSa1})

Consider the following control system on $N$ with admissible pair $y(\cdot)$ contained in the $G$-bundle $\rho:M\stackrel{G}{\To}N$ and admissible trajectory $x(t)=\rho(y(t))$ (See Remark \ref{generalcontrol}):
\begin{equation}\label{reducedcontrol}
\dot x(t)=d\rho(X_0(y(t))).
\end{equation}
The function $H$ in (\ref{abHam}) is the Hamiltonian of the time-optimal problem of the control system (\ref{reducedcontrol}). (Recall that the time optimal problem is the following minimization problem: Fix two points $x_0$ and $x_1$ in $N$ and minimize the time $t_1$ among all admissible trajectories $x(\cdot)$ of the control system (\ref{reducedcontrol}) such that $x(t_0)=x_0$ and $x(t_1)=x_1$.)

System (\ref{reducedcontrol}) is the reduced system
associated to system (\ref{controlagainagain}) according to the reduction procedure
described in \cite[Chapter 22]{AgSa1}. In particular, $\rho$ transforms any
admissible trajectory of system (\ref{controlagainagain}) to the admissible trajectory of
system (\ref{reducedcontrol}). Also, the smooth extremal trajectories of the time-optimal
problem for system (\ref{reducedcontrol}) are images under the map $\rho$ of singular
trajectories of system (\ref{controlagainagain}).

For any $\varepsilon>0$ and any $C^2$ smooth function $a:N\to\Real$, we introduce the map
$$
\Phi_a^\varepsilon:N\to N,\quad \Phi_a^\varepsilon(x)=\pi(e^{\varepsilon\vec H}(d_xa)),\ x\in N,
$$
where $\pi:T^*N\to N$ is the standard projection and $t\mapsto e^{t\vec H}$ is the Hamiltonian flow of $H$. Set
$$
\mathcal D=\{p\in T^*N : H(p)>0,\ H \ \mathrm{is\ of\ class}\ C^2\
\mathrm{at}\ p\}.
$$

Assume that $\Phi_a^\e$ pushes the measure $\mu'$ forward to another measure $\nu'$ on $N$. Consider some ``lifts'' $\mu$ and $\nu$ of the measures $\mu'$ and $\nu'$: $\rho_*\mu=\mu', \rho_*\nu=\nu'$. Let $\Psi:M\To M$ be an optimal map pushing forward $\mu$ to $\nu$, then the following theorem says that $\Psi$ is a covering of $\Phi^\varepsilon_a$: $\rho\circ\Psi=\Phi^\varepsilon_a\circ\rho.$ By the discussion above, we see that $x$ and $\Psi(x)$ are connected by singular trajectories as claimed.

\begin{thm}
Let $K$ be a compact subset of $N$ and $a\in C^2(N)$. Assume that $da|_K\subset\mathcal D$. Let $\mu$ and $\nu$ be Borel probability measures such that $\mathrm{supp}(\rho_*(\mu))\subset K$. Then, for any sufficiently small
$\varepsilon>0$ and any optimal Borel map $\Psi:M\to M$ of the control system (\ref{controlrescaled}) with any
Lagrangian $L$, the following is true whenever $\rho_*(\nu)={\Phi^\varepsilon_a}_*(\rho_*(\mu))$:
\[
\rho\circ\Psi=\Phi^\varepsilon_a\circ\rho.
\]

\end{thm}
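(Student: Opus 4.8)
The plan is to reduce the optimal transportation problem on $M$ to a time‑optimal control problem on $N$ and then invoke the uniqueness of the optimal map obtained in the spirit of Theorem \ref{main}. First I would analyze the structure of extremals of the re‑scaled system \eqref{controlrescaled}. By the reduction procedure of \cite[Chapter 22]{AgSa1}, the projection $\rho$ carries admissible trajectories of \eqref{controlrescaled} to admissible trajectories of the reduced system $\dot x(t)=\varepsilon\,d\rho(X_0(y(t)))$, and the singular (hence, by the arguments preceding the statement, the only candidates for optimal) trajectories of \eqref{controlrescaled} project exactly to the smooth extremals of the time‑optimal problem whose Hamiltonian is $H$. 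The first key step is therefore: show that for $\varepsilon$ small enough, the optimal trajectories of \eqref{controlrescaled} joining points of $\rho^{-1}(\mathrm{supp}(\rho_*\mu))$ to their images are strictly abnormal, so that their $\rho$‑projections are the normal (with respect to $H$) extremals generated by the Hamiltonian flow $e^{\varepsilon\vec H}$.

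The second step is to run, on the base $N$, the argument of Section 4 for the time‑optimal cost $c_N$ of the reduced system \eqref{reducedcontrol}: the dual‑pair function can be taken to be $a$ (or a $c_N$‑concave function whose relevant differential agrees with $da$ on $K$, using $da|_K\subset\mathcal D$), and Proposition \ref{dualunique} together with the Pontryagin maximum principle (Theorem \ref{PMPB}, Proposition \ref{appPMP}) identifies $\Phi^\varepsilon_a(x)=\pi(e^{\varepsilon\vec H}(d_xa))$ as the unique point paired with $x$; since $H$ is $C^2$ and positive on $\mathcal D$, the flow is well defined and $\Phi^\varepsilon_a$ is a genuine (Borel, in fact smooth on $K$) map. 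Thus $\Phi^\varepsilon_a$ is the optimal map downstairs for $(\mu',\nu')=(\rho_*\mu,\rho_*\nu)$.

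The third step is to transfer optimality upstairs. Given an optimal Borel map $\Psi:M\to M$ for \eqref{controlrescaled} pushing $\mu$ to $\nu$, its trajectories are optimal hence (by Step 1) strictly abnormal, so $\rho\circ\Psi$ is an admissible map for the reduced system with $(\rho\circ\Psi)_*\mu=\rho_*\nu=\Phi^\varepsilon_a{}_*\mu'$. Moreover the Lagrangian cost of $\Psi$ upstairs, when compared along the $\rho$‑fibres, controls the time‑cost of $\rho\circ\Psi$ downstairs in such a way that $\rho\circ\Psi$ must be cost‑minimizing for $c_N$ between $\mu'$ and $\nu'$ — here one uses that any admissible path of the reduced system lifts to an admissible path of \eqref{controlrescaled} of the same $\rho$‑projection, so no upstairs transport plan can beat a downstairs one after projection. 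By the uniqueness from Step 2, $\rho\circ\Psi=\Phi^\varepsilon_a\circ\rho$ $\mu'$‑a.e., and continuity/density arguments as in Theorem \ref{NormOpt} upgrade this to the everywhere identity on the relevant set.

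The main obstacle I expect is Step 1 together with the cost comparison in Step 3: one must show that for small $\varepsilon$ the optimal trajectories of \eqref{controlrescaled} are forced to be the abnormal ones (a perturbation argument off the unperturbed system $\varepsilon=0$, where the $X_i$‑part integrates along fibres of $\rho$ and the $X_0$‑drift is a small correction), and that the Lagrangian cost of such lifts, minimized over fibres, reproduces exactly the time‑optimal Hamiltonian $H$ of \eqref{reducedcontrol} up to the factor $\varepsilon$ — i.e. that the variable $u(\cdot)$ can be spent "for free" on the fibre direction so that the only residual cost is the base time. Making this precise, uniformly over the compact $K$, is the technical heart of the proof; everything else is a repackaging of Sections 3 and 4.
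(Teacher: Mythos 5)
There is a genuine gap: your proposal replaces the paper's short calibration argument with machinery that does not apply under the stated hypotheses, and hinges on two steps that you yourself flag as undone and that in fact cannot be carried out in this generality. First, your Step 1 (for small $\varepsilon$ the optimal trajectories of (\ref{controlrescaled}) are strictly abnormal) is neither provable for ``any Lagrangian $L$'' nor needed; the only fact required is that \emph{every} admissible trajectory of (\ref{controlrescaled}) projects, after the reparametrization $s=\varepsilon t$, to an admissible trajectory of the reduced system (\ref{reducedcontrol}) on $[0,\varepsilon]$ --- automatic because the $X_i$ are vertical. Second, your Step 2 tries to rerun Section 4 downstairs for the time-optimal cost, but the hypotheses of that machinery fail here: the theorem assumes nothing about absolute continuity of $\mu$ (so Rademacher/differentiability of a Kantorovich potential is unavailable), the minimal-time cost is not known to be Lipschitz, and $H$ is $1$-homogeneous, hence $C^2$ only on the set $\mathcal D$ and never at the zero section; moreover Theorem \ref{main} is a statement about fixed-horizon Lagrangian actions, not free-time problems. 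Your Step 3's ``cost comparison'' is likewise unnecessary: optimality of $\Psi$ with respect to $L$ plays no role beyond guaranteeing admissibility of its trajectories.

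The idea you are missing is the function $a_\varepsilon(x)=a\bigl((\Phi^\varepsilon_a)^{-1}(x)\bigr)+\varepsilon$, which acts as a calibration. Since $H(d_xa)>0$ on $K$, for small $\varepsilon$ the curve $t\mapsto\Phi^t_a(x)$ is the \emph{unique} admissible trajectory of (\ref{reducedcontrol}) from the hypersurface $a^{-1}(a(x))$ reaching $\Phi^\varepsilon_a(x)$ in time $\le\varepsilon$ (standard sufficient optimality condition via a field of extremals). Hence for every $\varepsilon$-admissible map $Q$ one has the pointwise inequality $a_\varepsilon(Q(x))\le a_\varepsilon(\Phi^\varepsilon_a(x))$, strict wherever $Q(x)\ne\Phi^\varepsilon_a(x)$. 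Integrating against $\rho_*(\mu)$ and using that $Q$ and $\Phi^\varepsilon_a$ push $\rho_*(\mu)$ to the same measure $\rho_*(\nu)$ forces $Q=\Phi^\varepsilon_a$ up to a $\rho_*(\mu)$-null set; applying this to $Q=\rho\circ\Psi$ (read through the fibres of $\rho$) gives $\rho\circ\Psi=\Phi^\varepsilon_a\circ\rho$. No Kantorovich duality, no classification of extremals upstairs, and no cost comparison is involved --- which is precisely why the theorem holds for arbitrary Borel $\mu$ and arbitrary $L$.
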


\begin{proof}
We start from the following.
\medskip\noindent

\begin{defn}
We say that a Borel map $Q:K\to N$ is $\varepsilon$-admissible for system (\ref{controlrescaled}) if there exists a Borel map $\varphi:K\to L^\infty([0,\varepsilon],G)$ such that
$$
Q(x_0)=x\left(\varepsilon;\varphi(x_0)(\cdot)\right),\quad \forall
x_0\in K,
$$
where $t\mapsto x\left(t;\varphi(x_0)(\cdot)\right)$ is an admissible trajectory of the reduced control system (\ref{reducedcontrol}) with initial condition $x\left(0;\varphi(x_0)(\cdot)\right)=x_0$.
\end{defn}

\medskip

We are going to prove that $\Phi^\varepsilon_a$ is an admissible map, unique up to
a $\rho_*(\mu)$-measure zero set, which transforms
$\rho_*(\mu)$ into $\rho_*(\nu)$. This fact implies the
statement of the theorem.

Inequality $H(d_xa)>0$ implies that $d\pi(\vec H(d_xa))$ is
transversal to the level hypersurface of $a$ through $x$. Hence the
map $\Phi^\varepsilon_a$ is invertible on a neighborhood of $K$ for
any sufficiently small $\varepsilon$. Moreover, the curve
$t\mapsto\Phi_a^t(y),\ 0\le t\le\varepsilon$, is a unique admissible
trajectory of system \ref{reducedcontrol} which starts at the hypersurface
$a^{-1}(a(x))$ and arrives at the point $\Phi^\varepsilon_a(x)$ at
time moment not greater than $\varepsilon$. The last fact is proved
by a simple adaptation of the standard sufficient optimality
condition (see \cite[Chapter 17]{AgSa1}).

Now we set
$$
a_\varepsilon(x)=a\left((\Phi^\varepsilon_a)^{-1}(x)\right)+\varepsilon,
$$
then $a_\varepsilon$ is a smooth function defined on a
neighborhood of $K$.

Optimality property of $\Phi^\varepsilon_a$ implies that
$$
a_\varepsilon(Q(x))\le
a_\varepsilon\left(\Phi^\varepsilon_a(x)\right)
$$
for any $\varepsilon$-admissible  map $Q$ and any $x\in K$, and
the inequality is strict at any point $x$ where
$Q(y)\ne\Phi^\varepsilon_a(x)$. In particular, if
$$
\rho_*(\mu)\left(\{x\in K :
Q(x)\ne\Phi^\varepsilon_a(x)\}\right)>0,
$$
then
$$
\int a_\varepsilon\, d(Q_*(\rho_*(\mu)))=\int a_\varepsilon\circ
Q\,d(\rho_*(\mu))<
$$
$$
\int
a_\varepsilon\circ\Phi^\varepsilon_a\,d(\rho_*(\mu)) =\int
a_\varepsilon\,d(\rho_*(\nu)).
$$
Hence $Q_*(\rho_*(\mu))\ne \rho_*(\nu).$
\end{proof}

\bigskip

\section{Example: the Grushin plane}

Grushin plane is the subriemannian space with base space $\Real^2$ and a singular
distribution defined by the span of the following vectors
$\{\partial_{x_1},x_1\partial_{x_2}\}$ in each tangent space. In other word, the fibre
of this distribution is the whole tangent space of $\Real^2$ if $x_1\neq 0$ and it is
spanned by $\partial_{x_1}$ otherwise. We define a subriemannian metric by declaring
that the two vector fields above are orthonormal. The control system is given by

$$ \dot x_1=u_1,\quad \dot x_2=u_2x_1.$$

The subriemannian distance $d$ is given by
$d(x,y)=\inf\limits_{\C_x^y}\int_0^1\sqrt{u_1^2+u_2^2}\,dt$. In this section, we consider the optimal transport problem with cost $c$ given by $c=d^2$.

There is no abnormal minimizer for this problem, so we consider its Hamiltonian $H$ given by
\[
H(x_1,x_2,p_1,p_2)=\frac{1}{2}(p_1^2+x_1^2p_2^2).
\]

The corresponding Hamiltonian equation is
\[
\dot x_1=p_1,\quad \dot x_2=x_1^2p_2, \quad \dot p_1=-x_1p_2^2, \quad \dot p_2=0.
\]

For simplicity, we consider the case $x_1(0)=0=x_2(0)$. And we let $p_1(0)=a$ and
$p_2(0)=b$.
In this case, the solutions give geodesics emanating from a point $(0,\delta)$ on the
$y$-axis. They are parameterized by $(a,b)$ and are given by
\begin{equation}\label{geo1}
x_1(t)=\frac{a}{b}\sin(bt),\quad x_2(t)=\frac{a^2}{4b^2}(2bt-\sin(2bt))+\delta
\end{equation}
if $b\neq 0$ and given by
\begin{equation}\label{geo2}
x_1(t)=at,\quad x_2(t)=\delta
\end{equation}
if $b=0$. A geodesic is length minimizing if and only if $-\pi/b\leq t\leq\pi/b$.

\begin{figure}
\includegraphics[angle=270,scale=0.6]{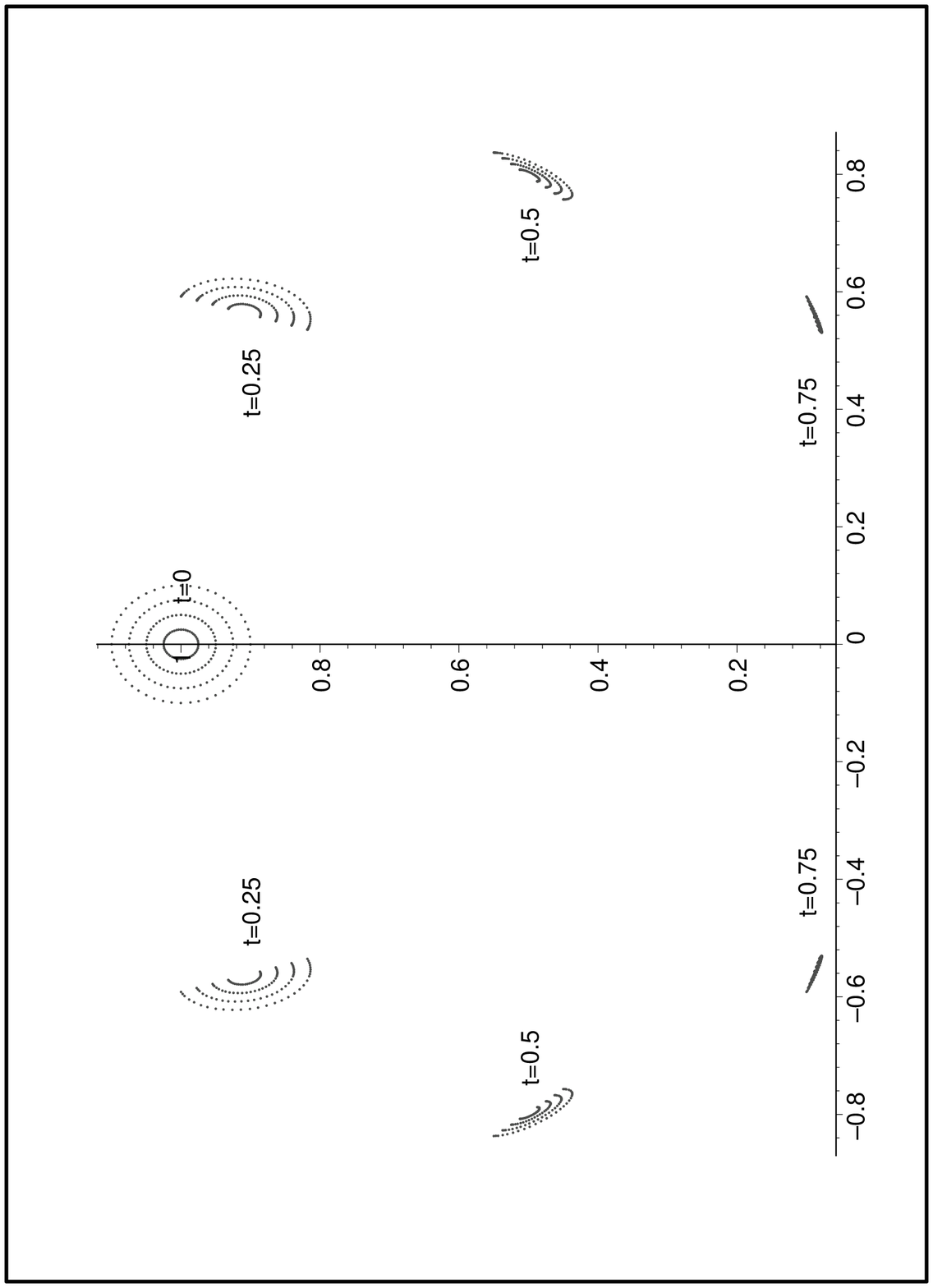}
\includegraphics[angle=270,scale=0.6]{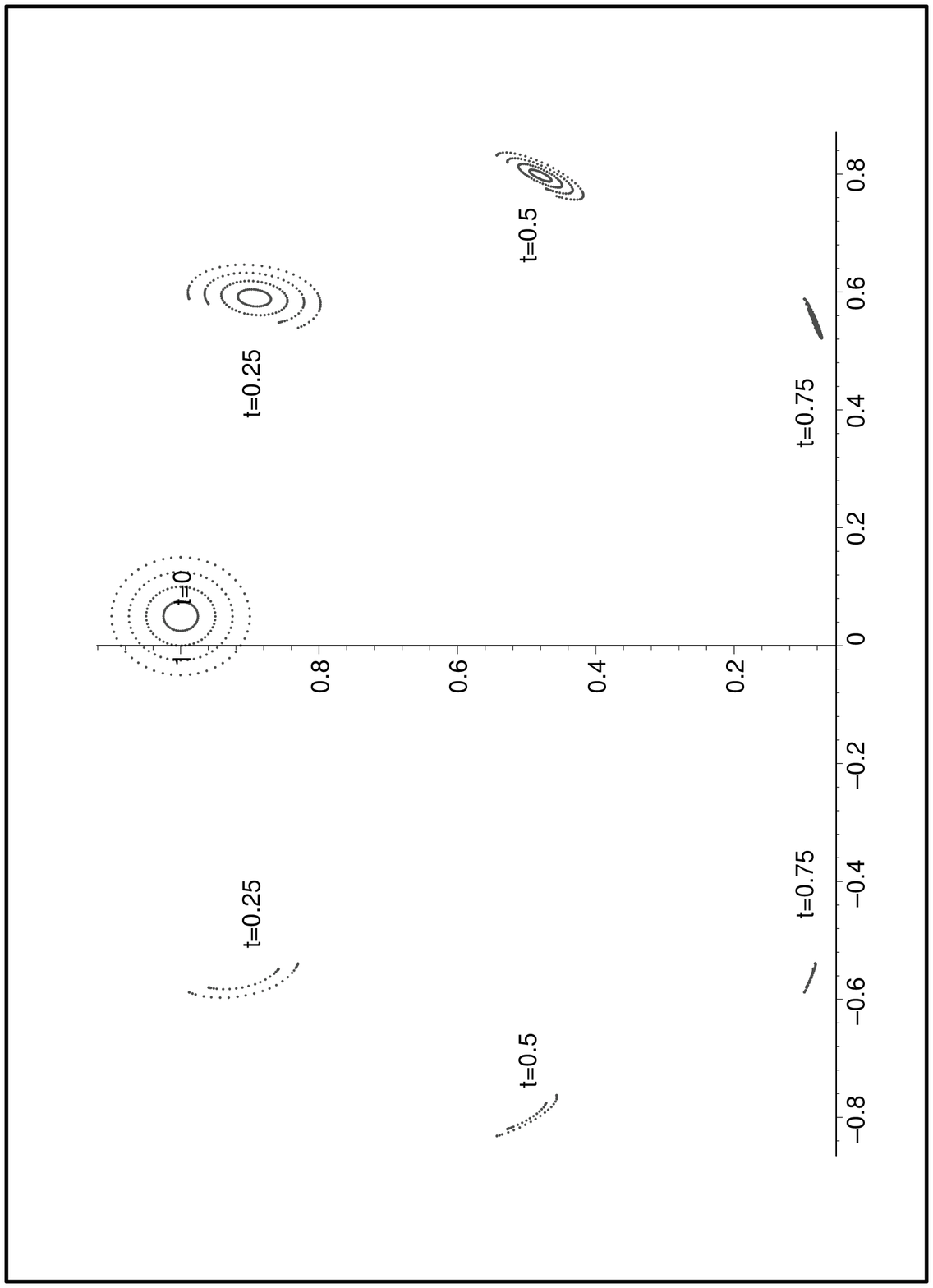}
\end{figure}

\begin{figure}
\includegraphics[angle=270,scale=0.6]{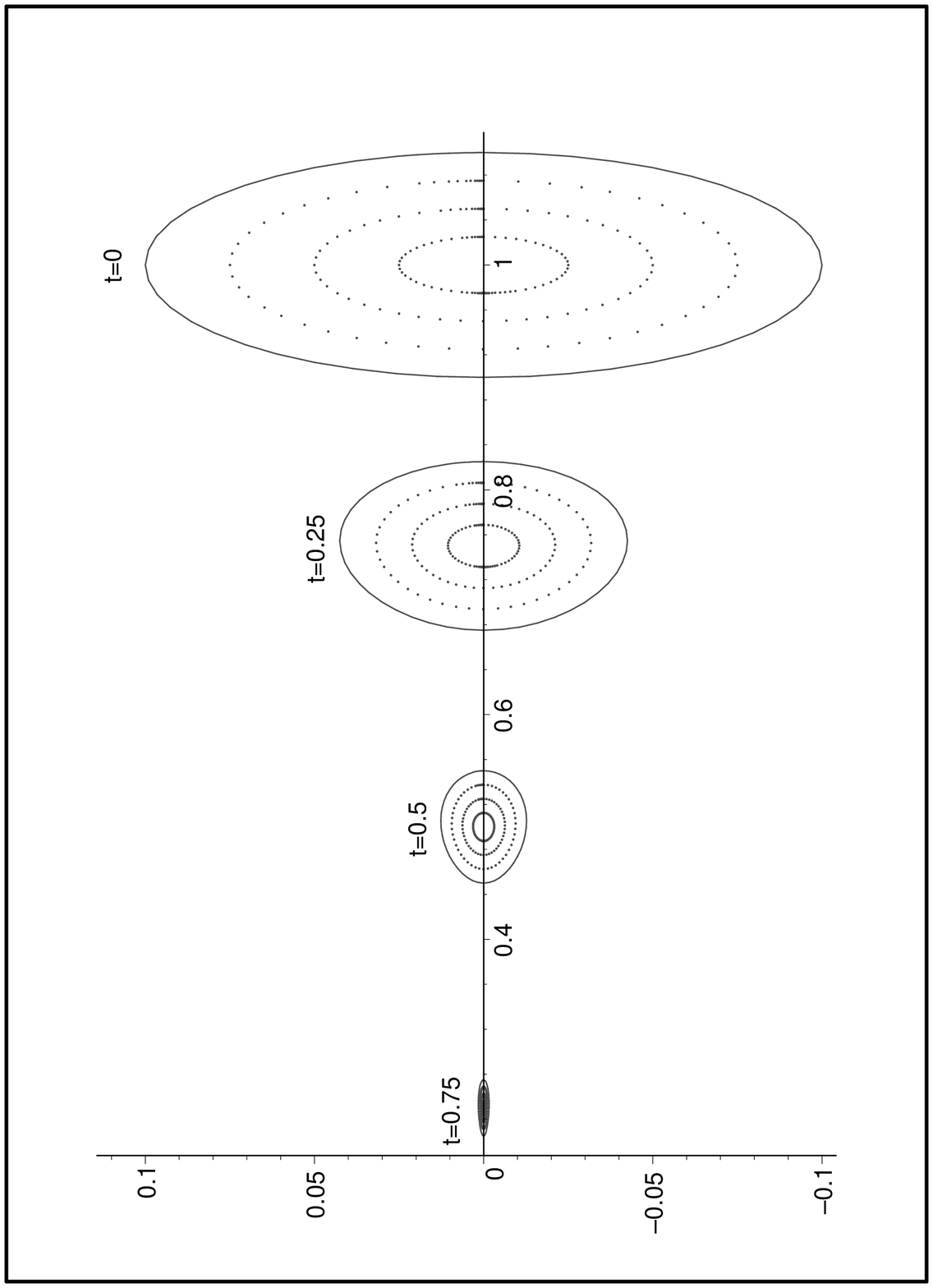}
\includegraphics[angle=270,scale=0.6]{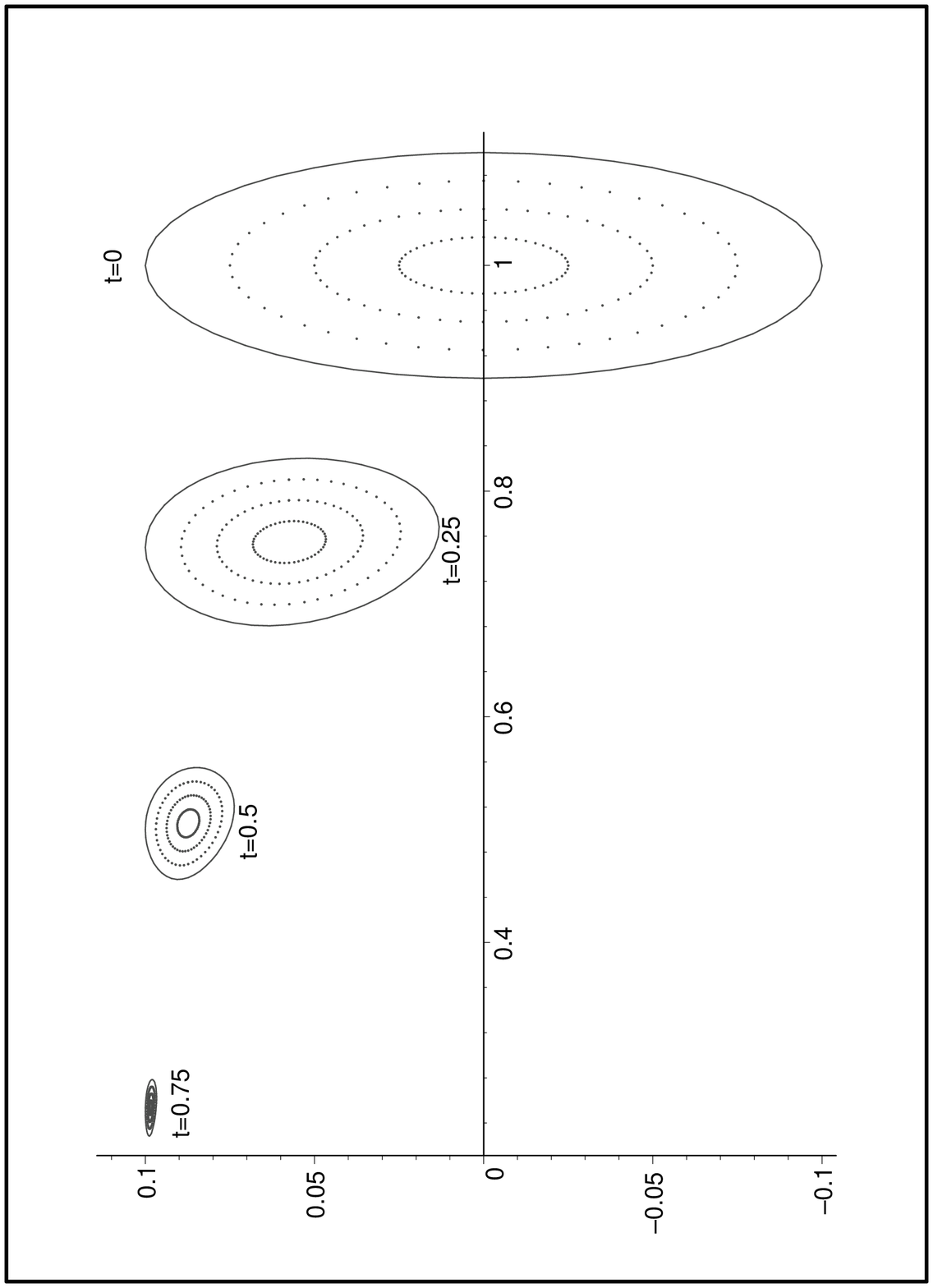}
\caption{Some displacement interpolations}
\end{figure}

\epsfysize=10 cm
\begin{figure}[htb]
\center{
\leavevmode
\rotatebox{270}{\epsfbox{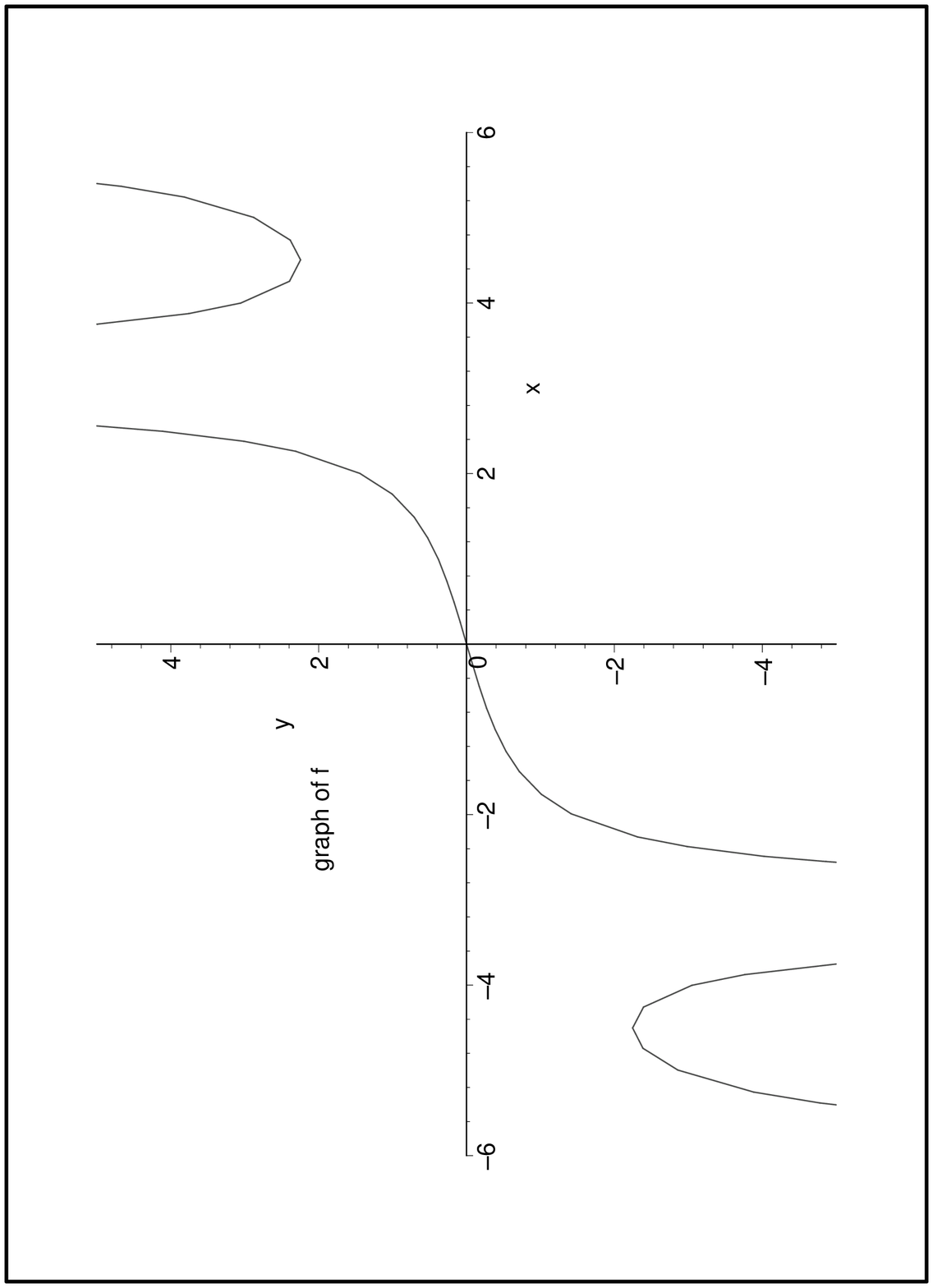}}
\caption{Graph of the function $f$}
}
\end{figure}

Next, we consider the mass transport problem. Let $d$ be the subriemannian distance of the Grushin plane and consider Problem \ref{optimal}with cost $c$ given by square of the subriemannian distance $d^2$. We also specialize to the case where the target measure $\nu$ is equal to the delta mass supported at the origin. In this case, the optimal map is clearly given by the constant map $x\mapsto (0,0)$. We are interested in the displacement interpolation corresponding to this optimal map. Recall that displacement interpolation is the one parameter family of maps $\phi_t$ such that $\phi_t$ is the optimal map with the cost $c_t$ given by the following:
\[
c_t(x,y)=\inf\int_0^tL(x(s),u(s))\,ds
\]
where the infimum ranges over all admissible pairs $(x(\cdot),u(\cdot))$ of the control system (\ref{control}) with initial condition $x(0)=x$ and final condition $x(t)=y$. It is easy to see that if $\phi_1=\pi(e^{\overrightarrow{H}}(-df))$ as in Theorem \ref{main}, then the displacement interpolation $\phi_t$ is given by $\pi(e^{t\overrightarrow{H}}(-df))$. Moreover, the displacement interpolation is related to the Hamilton-Jacobi equation via the method of characteristics. See \cite{BeBu} and \cite{CaSi} for details.

To do this, we first evaluate the equations (\ref{geo1}) and (\ref{geo2}) at $t=1$. Then we solve $a$ and $b$ in terms of $x_1(1)$ and $x_2(1)$. If $f:(-\pi,\pi)\to\Real$ is the function defined by $f(b)=\frac{2b-\sin(2b)}{4\sin^2(b)}$, then $f$ is invertible. A computation shows that
\[
a=\frac{f^{-1}\Big(\frac{x_2(1)-\delta}{x_1(1)^2}\Big)x_1(1)} {\sin\Big(f^{-1}\Big(\frac{x_2(1)-\delta}{x_1(1)^2}\Big)\Big)},\quad b=f^{-1}\Big(\frac{x_2(1)-\delta}{x_1(1)^2}\Big).
\]

Therefore, the displacement interpolation is given by
\[
\varphi_t(x_1,x_2)=\Big(\frac{a}{b}\sin(b(1-t)) ,\frac{a^2}{4b^2}(2tb-\sin(2(1-t)b)+\delta\Big),
\]
where $a=a(x_1,x_2)$ and $b=b(x_1,x_2)$ are given by
\[
a(x_1,x_2)=\frac{f^{-1}\Big(\frac{x_2-\delta}{x_1^2}\Big)x_1} {\sin\Big(f^{-1}\Big(\frac{x_2-\delta}{x_1^2}\Big)\Big)},\quad b(x_1,x_2)=f^{-1}\Big(\frac{x_2-\delta}{x_1^2}\Big).
\]

\section{Appendix}

This appendix is devoted to the prove of Theorem \ref{PMPB}. The first step is to reduce the problem into a simpler one. Recall that the Bolza problem is the following minimization problem:

\[
\inf_{(x(\cdot),u(\cdot))\in\C_{x_0}}\int_0^1L(x(s),u(s))\,ds-f(x(1))
\]
where the infimum is taken over all admissible pair $(x(\cdot),u(\cdot))$ satisfying the control system
\[
\dot x(s)=F(x(s),u(s))
\]
and initial condition $x(0)=x_0$.

Let $\br x=(x,z)$ be a point in the product manifold $M\times\Real$
and consider the following extended control system on it:

\begin{equation}\label{extendcontrol}
\dot{\br x}=\br F(\br x,u):=(F(x,u),L(x,u)).
\end{equation}

Note that $\br x(\cdot)=(x(\cdot),z(\cdot))$ satisfies this extended system and initial
condition $\br x(0)=(x_0,0)$ if and only if $x(\cdot)$ satisfies the
original control system in the Bolza problem with the initial condition
$x(0)=x_0$ and $z(t)=\int_0^tL(q(s),u(s))\,ds$. Therefore, Problem
\ref{Bolza} is equivalent to the following problem.

\begin{prob}\label{extendmin}
\begin{equation}
\inf_{(\br x(\cdot),u(\cdot))\in\C_{(x_0,0)}}\left(z(1)-f(x(1))\right),
\end{equation}
where the infimum is taken over all admissible pair satisfying the extended control system (\ref{extendcontrol}).
\end{prob}

Problem \ref{extendmin} is an example of the Mayer problem. Let $g:N\to\Real$ be a function on
the manifold $N$ and the Mayer problem is the following minimization problem:

\begin{prob}\label{Mayer}
\[
\inf_{\C_{\br x_0}} g(\br x(1))
\]
where the infimum is taken over all admissible pair $(\br x(\cdot),u(\cdot))$ satisfying the control system
\[
\dot{\br x}=\br F(\br x,u)
\]
on $N$ and initial condition $\br x(0)=\br x_0$.
\end{prob}

For each point $u$ in the control set $U$, define the corresponding
Hamiltonian function $\br H_u:T^*N\to\Real$ by
\[
\br H_u(p_{\br x})=p_{\br x}(\br F(\br x,u)).
\]

\medskip

\begin{thm}\label{PMP}(Pontryagin Maximum Principle for Mayer Problem)

Let $(\tl {\br x}(\cdot),\tl u(\cdot))$ be an admissible pair which achieve the infimum
in Problem \ref{Mayer}. Assume that the function $g$ in Problem
\ref{Mayer} is super-differentiable at the point $\tl {\br x}(1)$ and let
$\br \alpha$ be in the super-differential $d^+g_{\tl {\br x}(1)}$ of $g$. Then
there exists a Lipschitz path $\tl p(\cdot):[0,1]\to T^*N$ which satisfies
the following for almost all time $t$ in the interval $[0,1]$:

\begin{equation}
\left\{%
\begin{array}{ll}
    \pi(\tl p(t))=\tl {\br x}(t),\\
    \tl p(1)=\br \alpha,\\
    \dot{\tl p}(t)=\overrightarrow {\br H}_{\tl u(t)}(\tl p(t)),\\
    \br H_{\tl u(t)}(\tl p(t))=\min\limits_{u\in U}\br H_u(\tl p(t))
\end{array}%
\right.
\end{equation}

\end{thm}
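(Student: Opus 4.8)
The plan is to prove Theorem~\ref{PMP} by the classical method of needle (Pontryagin) variations combined with a separation argument, the superdifferentiability of $g$ supplying exactly the inequality that lets one pass from the nonsmooth terminal cost to a linear functional on the cone of endpoint variations.

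First I would build the adjoint curve directly out of the flow of the reference control. Let $P_{\tau,\sigma}$ be the flow from time $\tau$ to time $\sigma$ of the non-autonomous field $\br x\mapsto\br F(\br x,\tl u(t))$; since each $\br F_u$ is a smooth vector field and $\tl u(\cdot)$ is locally bounded and measurable, this flow is defined near the (compact) optimal trajectory, smooth in the space variable and Lipschitz in $\tau$ and $\sigma$. Put
\[
\tl p(t):=(P_{t,1})^*\br\alpha\ \in\ T^*_{\tl{\br x}(t)}N.
\]
Then $\pi(\tl p(t))=\tl{\br x}(t)$ and $\tl p(1)=\br\alpha$ by construction, and $\tl p$ is Lipschitz in $t$ because the flow is. Moreover the cotangent lift of the flow of a vector field $X$ is precisely the Hamiltonian flow of the fibrewise-linear Hamiltonian $p\mapsto p(X)$; applied to the time-dependent field $\br F(\cdot,\tl u(t))$, whose linear Hamiltonian is $\br H_{\tl u(t)}$, this shows $\dot{\tl p}(t)=\overrightarrow{\br H}_{\tl u(t)}(\tl p(t))$ for almost every $t$ (the right-hand side is measurable in $t$, smooth in $p$, so this is a Carath\'eodory solution). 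Hence three of the four conclusions hold for free and everything is reduced to the pointwise minimum condition.

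For the minimum condition I would use needle variations. Fix a Lebesgue point $\tau\in(0,1)$ of $t\mapsto\br F(\tl{\br x}(t),\tl u(t))$ and an arbitrary value $\omega\in U$, and let $u_\e$ equal $\omega$ on $[\tau-\e,\tau]$ and $\tl u$ elsewhere, with corresponding trajectory $\br x_\e$. A one-sided ODE estimate in a chart around $\tl{\br x}(\tau)$, in which the Lebesgue-point property linearises the measurable integrand near $t=\tau$, gives
\[
\br x_\e(\tau)=\tl{\br x}(\tau)+\e\,\delta+o(\e),\qquad
\delta:=\br F(\tl{\br x}(\tau),\omega)-\br F(\tl{\br x}(\tau),\tl u(\tau)).
\]
Since $u_\e=\tl u$ on $[\tau,1]$, we have $\br x_\e(1)=P_{\tau,1}(\br x_\e(\tau))$, and $P_{\tau,1}$ is $C^1$ in its initial point, so $\br x_\e(1)=\tl{\br x}(1)+\e\,(P_{\tau,1})_*\delta+o(\e)$. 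Now $\br x_\e(1)$ is attainable from $\br x_0$, so optimality of $(\tl{\br x},\tl u)$ forces $g(\br x_\e(1))\ge g(\tl{\br x}(1))$, while $\br\alpha\in d^+g_{\tl{\br x}(1)}$ gives $g(\br x_\e(1))\le g(\tl{\br x}(1))+\e\,\br\alpha\big((P_{\tau,1})_*\delta\big)+o(\e)$. Dividing by $\e$ and letting $\e\to0^+$ yields $\br\alpha\big((P_{\tau,1})_*\delta\big)\ge 0$, and transporting this back along the flow,
\[
\br\alpha\big((P_{\tau,1})_*\delta\big)=\tl p(\tau)(\delta)=\br H_\omega(\tl p(\tau))-\br H_{\tl u(\tau)}(\tl p(\tau)),
\]
so $\br H_{\tl u(\tau)}(\tl p(\tau))\le\br H_\omega(\tl p(\tau))$. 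As $\omega\in U$ is arbitrary and the admissible Lebesgue points form a full-measure set, $\br H_{\tl u(t)}(\tl p(t))=\min_{u\in U}\br H_u(\tl p(t))$ for almost all $t$, which finishes the proof.

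The step I expect to be the main obstacle is the needle estimate $\br x_\e(\tau)=\tl{\br x}(\tau)+\e\,\delta+o(\e)$: it must be made rigorous and uniform for a control $\tl u$ that is only locally bounded and measurable; it has to be carried out in a finite atlas covering the compact optimal trajectory so that the local estimates patch into a global one; and one must verify that the error stays $o(\e)$ after the $C^1$ propagation by $P_{\tau,1}$, so that dividing by $\e$ is legitimate. The remaining ingredients --- the Lipschitz regularity and a.e.\ differentiability of $\tl p(\cdot)$, the identification of $(P_{t,1})^*\br\alpha$ with the trajectory of $\overrightarrow{\br H}_{\tl u(t)}$, and the passage to the pointwise minimum --- are routine once that estimate is in hand.
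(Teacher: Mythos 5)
Your proposal is correct and follows essentially the same route as the paper: the adjoint curve is defined as $\tl p(t)=(P^0_{t,1})^*\br\alpha$, the Hamiltonian character of its evolution comes from the fact that cotangent lifts of flows preserve the tautological one-form, and the minimum condition is obtained from a needle variation on $[\tau-\e,\tau]$ combined with the optimality inequality and a $C^1$ test function from the superdifferential. The only cosmetic difference is that you phrase the admissible set of times $\tau$ via Lebesgue points of $t\mapsto\br F(\tl{\br x}(t),\tl u(t))$, while the paper uses points of differentiability of $t\mapsto P^0_{0,t}$; these are equivalent for the purposes of the argument.
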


\medskip

\begin{proof}
Fix a point $v$ in the control set and a number $\tau$ in the
interval $[0,1]$. For each small positive number $\e>0$, let $\ue$
be the admissible control defined by
\[\ue(t)=\left\{
\begin{array}{ll}
    \tl u (t), & \hbox{if $t\notin[\tau-\e,\tau]$;} \\
    v, & \hbox{if $t\in[\tau-\e,\tau]$.} \\
\end{array}
\right.\]

Since the optimal control $\tl u$ is locally bounded, the new
control $\ue$ defined above is also locally bounded. Let
$\Pe{t_0}{t_1}:N\to N$ be the time-dependent local flow of the
following ordinary differential equation
\[
\dot {\br x}(t)=\br F(\br x(t),\ue(t)).
\]
Here, $\Pe{0}{t}(\br x)$ denotes the image of the point $\br x$ in the
manifold $N$ under the local flow $\Pe{0}{t}$ at time $t$. It has
the property that $\Pe{t_2}{t_3}\circ\Pe{t_1}{t_2}=\Pe{t_1}{t_3}$.
Also, recall that $\Pe{t_0}{t_1}$ depends smoothly on the space
variable, Lipschitz with respect to the time variables.

Since $\tl {\br x}(1)=\Pz{0}{1}(\br x_0)$ and the function $g$ is minimizing
at $\tl {\br x}(1)$, the following is true for all $\e>0$:

\begin{equation}\label{PMP1}
g(\Pe01(\br x_0))\geq g(\Pz01(\br x_0)).
\end{equation}

Let $\br\alpha$ be a point in the super-differentiable $d^+g_{\tl{\br
x}(1)}$ at the point $\tl {\br x}(1)$, then there exists a $C^1$ function
$\phi:N\to\Real$ such that $d\phi_{\tl {\br x}(1)}=\br \alpha$ and $g-\phi$
has a local maximum at $\tl {\br x}(1)$. Combining this with (\ref{PMP1}),
we have

$$
g(\Pz01(\br x_0))-\phi(\Pe01(\br x_0))\leq
$$
$$
g(\Pe01(\br x_0))-\phi(\Pe01(\br x_0)) \leq g(\Pz01(\br x_0))-\phi(\Pz01(\br x_0)).
$$

Simplifying this equation, we get

\begin{equation}\label{PMP2}
\frac{\phi(\Pe01(\br x_0))-\phi(\Pz01(\br x_0))}{\e}\geq 0.
\end{equation}

If $R_t$ denotes the flow of the vector field $\br F_v$, then

\begin{equation}\label{PMP3}
\Pe01=\Pz\tau 1\circ R_\e\circ\Pz0{\tau-\e}.
\end{equation}

So, if we assume that $\tau$ is a point of differentiability of the
map $t\mapsto\Pz0t$ which is true for almost all time $\tau$ in the
interval $[0,1]$, then $\Pe01$ is differentiable with respect to
$\e$ at zero. Therefore, we can let $\e$ goes to 0 in (\ref{PMP2})
and obtain

\begin{equation}\label{PMP4}
\br \alpha\left(\ddepZ\Pe01\right)\geq 0.
\end{equation}

If we differentiate equation (\ref{PMP3}) with respect to $\e$ and
set it to zero, it becomes

\[
\ddepZ\Pe01=(\Pz \tau1)_*(\br F_v-\br F_{\tl u(\tau)})\circ\Pz01.
\]

Substitute this equation back into (\ref{PMP4}), we get the
following:

\begin{equation}\label{PMP5}
((\Pz \tau1)^*\br \alpha) (\br F_v(\tl {\br x}(\tau))-\br F_{\tl u(\tau)}(\tl
{\br x}(\tau)))\geq 0.
\end{equation}

Define $\tl p:[0,1]\to T^*N$ by $\tl p(t)=(\Pz t1)^*\br \alpha$, then
the first two assertions of the theorem are clearly satisfied.

The following is well known (See \cite{AgSa1} or \cite{MaRa}).

\begin{lem}\label{pretau}
Let $\theta=pdq$ be the tautological 1-form on the cotangent bundle
of the manifold $N$, then for each diffeomorphism $P:N\to N$, the
pull back map $P^*:T^*N\to T^*N$ on the cotangent bundle of the
manifold preserves the 1-form $\theta$.
\end{lem}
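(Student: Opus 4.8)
Lemma \ref{pretau} states that for any diffeomorphism $P:N\to N$, the induced pullback $P^*:T^*N\to T^*N$ preserves the tautological $1$-form $\theta = p\,dq$.

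Here is how I would prove it. The plan is to verify the identity $(P^*)^*\theta = \theta$ by evaluating both sides on an arbitrary tangent vector to $T^*N$, using the intrinsic characterization of $\theta$ rather than coordinates. Recall that $\theta$ is defined at a point $p_x\in T^*N$ (with $x=\pi(p_x)$) by $\theta_{p_x}(\xi) = p_x(d\pi(\xi))$ for $\xi\in T_{p_x}(T^*N)$; this is the standard coordinate-free definition, and it agrees with $p\,dq$ in any Darboux chart.

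First I would set up notation: write $\Psi = P^*: T^*N\to T^*N$ for the cotangent lift. The key structural fact is that $\Psi$ covers $P^{-1}$ on the base, i.e. $\pi\circ\Psi = P^{-1}\circ\pi$; indeed, $P^*$ sends a covector at $x$ to a covector at $P^{-1}(x)$, since for $\alpha\in T^*_xN$ one has $(P^*\alpha)_{P^{-1}(x)} = \alpha\circ dP_{P^{-1}(x)}$. Now take a point $p_x\in T^*N$ and a tangent vector $\xi\in T_{p_x}(T^*N)$, and set $q_y = \Psi(p_x)$, where $y = P^{-1}(x)$. Then
\[
(\Psi^*\theta)_{p_x}(\xi) = \theta_{q_y}(d\Psi(\xi)) = q_y\bigl(d\pi(d\Psi(\xi))\bigr) = q_y\bigl(d(\pi\circ\Psi)(\xi)\bigr) = q_y\bigl(d(P^{-1}\circ\pi)(\xi)\bigr) = q_y\bigl(dP^{-1}(d\pi(\xi))\bigr).
\]
By definition of $q_y = P^*p_x = p_x\circ dP_y$, the right-hand side equals $p_x\bigl(dP_y(dP^{-1}(d\pi(\xi)))\bigr) = p_x\bigl(d\pi(\xi)\bigr) = \theta_{p_x}(\xi)$, using $dP_y\circ dP^{-1}_x = \mathrm{id}$. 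Since $\xi$ was arbitrary, $\Psi^*\theta = \theta$.

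The only mild subtlety — and the one place to be careful — is bookkeeping with base points and the direction of the maps: $P^*$ is contravariant, so it covers $P^{-1}$, not $P$, on $N$, and one must keep the chain-rule composition $dP\circ dP^{-1}=\mathrm{id}$ straight. There is no real analytic difficulty; the statement is purely a naturality property of the Liouville form. Alternatively, one could simply work in Darboux coordinates $(q,p)$, compute that $P^*$ has the form $(q,p)\mapsto (\tilde q(q), p\,\partial q/\partial \tilde q)$ and check $\sum \tilde p_i\, d\tilde q_i = \sum p_i\, dq_i$ by the chain rule, but the coordinate-free argument above is cleaner. Since the paper cites this as well known (referring to \cite{AgSa1} and \cite{MaRa}), a brief indication along these lines suffices.
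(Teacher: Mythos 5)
Your argument is correct: the coordinate-free computation $(\Psi^*\theta)_{p_x}(\xi)=q_y(dP^{-1}(d\pi(\xi)))=p_x(d\pi(\xi))$, with the bookkeeping that the cotangent lift $P^*$ covers $P^{-1}$ on the base, is exactly the standard naturality proof of the Liouville form. The paper itself gives no proof of this lemma --- it only cites \cite{AgSa1} and \cite{MaRa} as ``well known'' --- so your write-up simply supplies the omitted standard argument, and it does so accurately.
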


Let $W_t$ be the time-dependent vector field on the cotangent bundle
of the manifold which satisfies

\[
\ddt(\Pz t1)^*=W_t\circ(\Pz t1)^*
\]
for almost all time $t$ in $[0,1]$. If $\LD_V$ denotes the Lie
derivative with respect to a vector field $V$, then, by Lemma
\ref{pretau}, the following is true for almost all time $t$ in
$[0,1]$:

$$
\LD_{W_t}\theta=0.
$$

If $\omega=-d\theta$ is the canonical symplectic 2-form on the
cotangent bundle, then, by using Cartan's formula, we have
\[
i_{W_t}\omega=d(\theta(W_t)).
\]
Therefore, the vector field $W_t$ is a Hamiltonian vector field with
Hamiltonian given by
\[
\br H_{\tl u(t)}(p)=p(\br F(\br x,\tl u(t))).
\]
The third assertion of the theorem follows from this. The last assertion
follows from (\ref{PMP5}).
\end{proof}

Going back to Problem \ref{extendmin}, we can apply Pontryagin Maximum Principle for Mayer problem. Let $(\tl x(\cdot),\tl z(\cdot))$ be an admissible pair which minimizes Problem \ref{extendmin} and let $\br H_t:T^*M\times\Real\to\Real$ be the function defined by

\[
\br H_t(p,l)=p(F(x,\tl u(t)))+l\cdot L(x,\tl u(t)).
\]
By Theorem \ref{PMP}, there exists a curve $(\tl p(\cdot),\tl l(\cdot)):[0,1]\to T^*_{\tl x}
M\times\Real$ such that $\tl x(t)=\pi(\tl p(t))$ and

\begin{equation}\label{extendHam}
\left\{%
\begin{array}{ll}
    (\dot{\tl p},\dot{\tl l})=\overrightarrow{\br H}_t(\tl p,\tl l), \\
    (\tl p(1),\tl l(1))=(-\alpha,1), \\
    \br H_t(\tl p(t),\tl l(t)) =
    \min\limits_{u\in U}\left(\tl p(t)(F(\tl x(t),u))+\tl l(t)\cdot L(\tl x(t),u)\right)
\end{array}%
\right.
\end{equation}

From the first equation in (\ref{extendHam}), we get $\dot{\tl l}=0$. So, $\tl l(t)\equiv 1$. Therefore, (\ref{extendHam}) is
simplified to
\begin{equation}
\left\{%
\begin{array}{ll}
    \dot {\tl p}=\overrightarrow H_{\tl u}(\tl p),\\
    \tl p(1)=-\alpha,\\
    H_{\tl u}(\tl p(t),\tl P(t))=
    \min\limits_{u\in U}\left(\tl p(t)(F(\tl x(t),u))+L(\tl x(t),u)\right).
\end{array}%
\right.
\end{equation}

This finishes the proof of Theorem \ref{PMPB}.

\section*{Acknowledgment}

The second author would like to express deep gratitude to his
supervisor, Boris Khesin, who suggested to him the problem of
optimal mass transportation on subriemannian manifolds.

\bigskip

\end{document}